\newtheorem{theorem}{Theorem}
\newtheorem{remark}{Remark}
\newtheorem{lemma}{Lemma}
\newtheorem{proof}{Proof}
\newtheorem{corollary}{Corollary}
\newtheorem{assumption}{Assumption}
\newcommand{\x}{\mathbf{x}}
\newcommand{\y}{\mathbf{y}}
\newcommand{\z}{\mathbf{z}}
\newcommand{\w}{\mathbf{w}}
\newcommand{\s}{\mathbf{s}}
\newcommand{\1}{\mathbf{1}}
\newcommand{\A}{A}
\newcommand{\W}{W}
\renewcommand{\S}{\mathbb{S}}
\newcommand{\E}{\mathbb{E}}
\newcommand{\I}{I}
\newcommand{\U}{U}
\newcommand{\V}{V}
\newcommand{\R}{\mathbb{R}}
\newcommand{\bO}{{\cal O}}
\newcommand{\N}{\mathcal{N}}
\newcommand{\D}{\mathcal{D}}
\newcommand{\blambda}{\mathbf{\lambda}}
\newcommand{\<}{\left\langle}
\renewcommand{\>}{\right\rangle}
\newcommand{\tabincell}[2]{\begin{tabular}{@{}#1@{}}#2\end{tabular}}
\icmltitlerunning{Variance Reduced EXTRA and DIGing and Their Optimal Acceleration}
\begin{document}

\onecolumn
\icmltitle{Variance Reduced EXTRA and DIGing and Their Optimal Acceleration for Strongly Convex Decentralized Optimization}

\begin{icmlauthorlist}
\icmlauthor{Huan Li}{to}
\icmlauthor{Zhouchen Lin}{goo}
\icmlauthor{Yongchun Fang}{to}
\end{icmlauthorlist}

\icmlaffiliation{to}{Institute of Robotics and Automatic Information Systems, College of Artificial Intelligence, Nankai University, Tianjin, China.\\}
\icmlaffiliation{goo}{Key Laboratory of Machine Perception, School of Artificial Intelligence, Peking University, Beijing, China. Institute for Artificial Intelligence, Peking University, Beijing, China. Peng Cheng Laboratory, Shenzhen, China.\\}
\icmlcorrespondingauthor{Huan Li and Zhouchen Lin}{lihuanss@nankai.edu.cn, zlin@pku.edu.cn}





\vskip 0.3in



\printAffiliationsAndNotice{}  

\begin{abstract}
  We study stochastic decentralized optimization for the problem of training machine learning models with large-scale distributed data. We extend the widely used EXTRA and DIGing methods with variance reduction (VR), and propose two methods: VR-EXTRA and VR-DIGing. The proposed VR-EXTRA requires the time of $\bO((\kappa_s+n)\log\frac{1}{\epsilon})$ stochastic gradient evaluations and $\bO((\kappa_b+\kappa_c)\log\frac{1}{\epsilon})$ communication rounds to reach precision $\epsilon$, which are the best complexities among the non-accelerated gradient-type methods, where $\kappa_s$ and $\kappa_b$ are the stochastic condition number and batch condition number for strongly convex and smooth problems, respectively, $\kappa_c$ is the condition number of the communication network, and $n$ is the sample size on each distributed node. The proposed VR-DIGing has a little higher communication cost of $\bO((\kappa_b+\kappa_c^2)\log\frac{1}{\epsilon})$. Our stochastic gradient computation complexities are the same as the ones of single-machine VR methods, such as SAG, SAGA, and SVRG, and our communication complexities keep the same as those of EXTRA and DIGing, respectively. To further speed up the convergence, we also propose the accelerated VR-EXTRA and VR-DIGing with both the optimal $\bO((\sqrt{n\kappa_s}+n)\log\frac{1}{\epsilon})$ stochastic gradient computation complexity and $\bO(\sqrt{\kappa_b\kappa_c}\log\frac{1}{\epsilon})$ communication complexity. Our stochastic gradient computation complexity is also the same as the ones of single-machine accelerated VR methods, such as Katyusha, and our communication complexity keeps the same as those of accelerated full batch decentralized methods, such as MSDA. To the best of our knowledge, our accelerated methods are the first to achieve both the optimal stochastic gradient computation complexity and communication complexity in the class of gradient-type methods.
\end{abstract}

\section{Introduction}
Emerging machine learning applications involve huge amounts of data samples, and the data are often distributed across multiple machines for storage and computational reasons. In this paper, we consider the following distributed convex optimization problem with $m$ nodes, and each node has $n$ local training samples:
\begin{eqnarray}\label{problem}
\min_{x\in\R^p} \sum_{i=1}^m f_{(i)}(x),\quad\mbox{where}\quad f_{(i)}(x)=\frac{1}{n}\sum_{j=1}^n f_{(i),j}(x),
\end{eqnarray}
where the local component function $f_{(i),j}$ represents the $j$th sample of node $i$, and it is not accessible by any other node in the communication network. The network is abstracted as a connected and undirected graph $\mathcal{G}=(\mathcal{V},\mathcal{E})$, where $\mathcal{V}=\{1,2,...,m\}$ is the set of nodes, and $\mathcal{E}\subset\mathcal{V}\times\mathcal{V}$ is the set of edges. Nodes $i$ and $j$ can send information to each other if and only if $(i,j)\in\mathcal{E}$. The goal of the networked nodes is to cooperatively solve problem (\ref{problem}) via local computation and communication, that is, each node $i$ makes its decision only based on the local computations on $f_{(i)}$, for example, the gradient, and the local information received from its neighbors in the network.

When the local data size $n$ is large, the cost of computing the full batch gradient $\nabla f_{(i)}$ at each iteration is expensive. To address the issue of large-scale distributed data, stochastic decentralized algorithms are often used to solve problem (\ref{problem}), where each node only randomly samples one component gradient at each iteration (extendable to the mini-batch settings with more than one randomly selected component). Most decentralized algorithms alternate between computations and communications. Thus to compare the performance of such methods, two measures are used: the number of communication rounds and the number of stochastic gradient evaluations, where one communication round allows each node to send information to their neighbors, for example, $\bO(1)$ vectors of size $p$, and one stochastic gradient evaluation refers to computing the randomly sampled $\nabla f_{(i),j}$ for all $i\in \mathcal{V}$ in parallel \citep{richtaric-2020-dist}.

Although stochastic decentralized optimization has been a hot topic in recent years, and several algorithms have been proposed, to the best of our knowledge, in the class of algorithms not relying on the expensive dual gradient evaluations, there is no algorithm optimal in both the number of communication rounds and the number of stochastic gradient evaluations \citep{richtaric-2020-dist}, where ``optimal" means matching the corresponding lower bounds. In this paper, we extend two widely used decentralized algorithms of EXTRA \citep{shi2015extra} and DIGing \citep{shi2017,qu2017}, which have sparked a lot of interest in the distributed optimization community, to stochastic decentralized optimization by combining them with the powerful variance reduction technique. Furthermore, we propose two accelerated stochastic decentralized algorithms, which are optimal in the above two measures of communications and stochastic gradient computations.

\subsection{Notations and Assumptions}\label{sec-assumption}
Denote $x_{(i)}\in\R^p$ to be the local variable for node $i$. To simplify the algorithm description in a compact form, we introduce the aggregate objective function $f(\x)$ with its aggregate variable $\x$ and aggregate gradient $\nabla f(\x)$ as
\begin{eqnarray}
\qquad\begin{aligned}\label{aggregate}
\x=\left(
  \begin{array}{c}
    x_{(1)}^T\\
    \vdots\\
    x_{(m)}^T
  \end{array}
\right),\quad f(\x)=\sum_{i=1}^m f_{(i)}(x_{(i)}),\quad\nabla f(\x)=\left(
  \begin{array}{c}
    \nabla f_{(1)}(x_{(1)})^T\\
    \vdots\\
    \nabla f_{(m)}(x_{(m)})^T
  \end{array}
\right).
\end{aligned}
\end{eqnarray}
Denote $x^*$ to be the optimal solution of problem (\ref{problem}), and let $\x^*=\1 (x^*)^T$, where $\1$ is the column vector of $m$ ones. Denote $\I$ as the identity matrix, and $\N_{(i)}$ as the neighborhood of node $i$. Denote $\mbox{Ker}(\U)=\{x\in\R^m|\U x=0\}$ as the kernel space of matrix $\U\in\R^{m\times m}$, and $\mbox{Span}(\U)=\{y\in\R^m|y=\U x,\forall x\in\R^m\}$ as the linear span of all the columns of $\U$. For matrices, we denote $\|\cdot\|$ as the Frobenius norm for simplicity without ambiguity, since it is the only matrix norm we use in this paper. The notation $A\succeq B$ means $A-B$ is positive semidefinite.

We make the following assumptions for the functions in (\ref{problem}).
\begin{assumption}\label{assumption_f}
Each $f_{(i)}(x)$ is $L_{(i)}$-smooth and $\mu$-strongly convex. Each $f_{(i),j}(x)$ is $L_{(i),j}$-smooth and convex.
\end{assumption}
We say a function $g(x)$ is $L$-smooth if its gradient satisfies $\|\nabla g(\y)-\nabla g(\x)\|\leq L\|\y-\x\|$. Motivated by \citep{ADFS,DVR}, we define several notations as follows:
\begin{equation}
L_f=\max_i L_{(i)},\quad\overline L_{(i)}=\frac{1}{n}\sum_{j=1}^nL_{(i),j},\quad \overline L_f=\max_i \overline L_{(i)},\quad\kappa_s=\frac{\overline L_f}{\mu},\quad\kappa_b=\frac{L_f}{\mu}.\label{def_f}
\end{equation}
Then  $f(\x)$ is also $\mu$-strongly convex and $L_f$-smooth. It always holds that $L_{(i)}\leq\overline L_{(i)}\leq nL_{(i)}$\footnote{See footnote 14 in \citep{kat} for the analysis.}, which further gives
\begin{equation}
L_f\leq\overline L_f\leq nL_f\quad\mbox{and}\quad \kappa_b\leq\kappa_s\leq n\kappa_b. \label{L-relation}
\end{equation}
We follow \citep{ADFS} to call $\kappa_b$ the batch condition number, and $\kappa_s$ the stochastic condition number, which are classical quantities in the analysis of batch optimization methods and finite-sum optimization methods, respectively. Generally, we have $\kappa_s\ll n\kappa_b$, see \citep{kat} for the example and analysis.

In decentralized optimization, communication is often represented as a matrix multiplication with a weight matrix $\W\in\R^{m\times m}$. We make the following assumptions for this weight matrix associated to the network\footnote{The weights can be assigned heuristically or optimized given the fixed graph structure \citep{boyd2004}.}.
\begin{assumption}\label{assumption_w}
$\vspace*{-0.4cm}\linebreak$
\begin{enumerate}
\item $\W_{i,j}\neq 0$ if and only if agents $i$ and $j$ are neighbors or $i=j$. Otherwise, $\W_{i,j}=0$.
\item $\W=\W^T$, $\I\succeq\W\succeq \omega\I$, and $\W\1=\1$.
\end{enumerate}
\end{assumption}

We let $\omega=0$ for EXTRA, and $\omega=\frac{\sqrt{2}}{2}$ for DIGing. We can relax $\omega$ to be any small positive constant for DIGing\footnote{In this case, condition (\ref{UVbound}) is relaxed to $\|\V\x\|^2\leq (1-\omega^2)\|\x\|^2$. By the similar proofs of Theorem \ref{non-acc-theorem}, we can obtain the $\bO(\frac{1}{\omega^2}(\kappa_b+\kappa_c^2)\log\frac{1}{\epsilon})$ complexity for DIGing.}, and fix it to $\frac{\sqrt{2}}{2}$ to simplify the analysis. For EXTRA, we can also relax the condition to $\I\succeq\W\succeq (-1+\delta)\I$ for any small positive constant $\delta$\footnote{In this case, the complexity of EXTRA becomes $\bO(\frac{1}{\delta}(\kappa_b+\kappa_c)\log\frac{1}{\epsilon})$.}.
Part 2 of Assumption \ref{assumption_w} implies that the eigenvalues of $\W$ lie in $[\omega,1]$, and its largest one $\sigma_1(\W)$ equals 1. Moreover, if the network is connected, we have $\sigma_2(\W)<1$, where $\sigma_2(\W)$ means the second largest eigenvalue. We often use
\begin{equation}\label{define-kappa-c}
\kappa_c=\frac{1}{1-\sigma_2(\W)}
\end{equation}
as the condition number of the communication network, which upper bounds the ratio between the largest eigenvalue and the smallest non-zero eigenvalue of $(\I-\W)$, which is a gossip matrix \citep{dasent}.

As will be introduced in the next section, we often use $\kappa_b$ and $\kappa_c$ to describe the number of communication rounds, and $\kappa_s$ for the number of stochastic gradient evaluations in stochastic decentralized optimization.
\subsection{Literature Review}
In this section, we give a brief review for the decentralized and stochastic methods, as well as their combination. Table \ref{table-comp} sums up the complexities of the representative ones.

\subsubsection{Full Batch Decentralized Algorithms}
Distributed optimization has gained significant attention for a long time \citep{Bertsekas1983,Tsitsiklis1986}. The modern distributed gradient descent (DGD) was proposed in \citep{Nedic-2009} for the general network topology, and was further extended in \citep{nedic2011asynchronous,Ram-2010,Yuan-2016}. These algorithms are usually slow due to the diminishing step-size, and suffer from the sublinear convergence even for strongly convex and smooth objectives. To avoid the diminishing step-size and speed up the convergence, several methods relying
on tracking the differences of gradients have been proposed. Typical examples include EXTRA \citep{shi2015extra}, DIGing \citep{shi2017,qu2017}, NIDS \citep{NIDS}, and other similar algorithms \citep{aug-dgm,ranxin2018}. Especially, EXTRA \citep{li-2019-extra} and NIDS \citep{NIDS} have the $\bO((\kappa_b+\kappa_c)\log\frac{1}{\epsilon})$ complexity both in communications and full batch gradient evaluations to solve problem (\ref{problem}) to reach precision $\epsilon$, which is the best among the non-accelerated algorithms. DIGing has a slight higher complexity of $\bO((\kappa_b+\kappa_c^2)\log\frac{1}{\epsilon})$ \citep{sulaiman2020}. Another typical class of distributed algorithms is based on the Lagrangian function, and they work with the Fenchel dual. Examples include the dual ascent \citep{Terelius-2011,dasent,Uribe-2017}, ADMM \citep{Iutzeler-2016,makhdoumi-2017,Aybat2018}, and the primal-dual method \citep{Lam-2017,scaman-2018,hong-2017,jakovetic-2017}. However, the dual-based methods often need to compute the gradient of the Fenchel conjugate of the local functions, called dual gradient in the sequel, which is expensive.

Nesterov's acceleration technique is an efficient approach to speed up the convergence of first-order methods, and it has also been successfully applied to decentralized optimization. Typical examples include the distributed Nesterov gradient with consensus \citep{Jakovetic-2014}, the distributed Nesterov gradient descent \citep{qu2017-2}, the multi-step dual accelerated method (MSDA) \citep{dasent,scaman-2019-jmlr}, accelerated penalty method \citep{li-2018-pm}, accelerated EXTRA \citep{li-2019-extra}, and the accelerated proximal alternating predictor-corrector method (APAPC) \citep{richtaric-2020-dist}. Some of these methods have suboptimal computation complexity, and Chebyshev acceleration (CA) \citep{Arioli-2014} is a powerful technique to further reduce the computation cost. \citet{dasent,scaman-2019-jmlr} proved the $\varOmega(\sqrt{\kappa_b\kappa_c}\log\frac{1}{\epsilon})$ lower bound on the number of communication rounds and the $\varOmega(\sqrt{\kappa_b}\log\frac{1}{\epsilon})$ lower bound on the number of full batch gradient evaluations, which means that any first-order full batch decentralized methods cannot be faster than these bounds. The MSDA and APAPC methods with CA achieve these lower bounds.

\subsubsection{Stochastic Algorithms on a Single Machine}\label{sec:stointro}

Stochastic gradient descent (SGD) has been the workhorse in machine learning. However, since the variance of the noisy gradient will not go to zero, SGD often suffers from the slow sublinear convergence. Variance reduction (VR) was designed to reduce the negative effect of the noise, which can improve the stochastic gradient computation complexity to $\bO((\kappa_s+n)\log\frac{1}{\epsilon})$. On the other hand, full batch methods, such as gradient descent, require $\bO(\kappa_b\log\frac{1}{\epsilon})$ iterations, and thus $\bO(n\kappa_b\log\frac{1}{\epsilon})$ individual gradient evaluations for finite-sum problems with $n$ samples, which may be much larger than $\bO((n+\kappa_s)\log\frac{1}{\epsilon})$ when $\kappa_s\ll n\kappa_b$. Representative examples of VR methods include SAG \citep{SAG}, SAGA \citep{SAGA}, and SVRG \citep{SVRG,xiao-2014-svrg-siam}. We can further accelerate the VR methods to the $\bO((\sqrt{n\kappa_s}+n)\log\frac{1}{\epsilon})$ stochastic gradient computation complexity by Nesterov’s acceleration technique. Examples include Katyusha \citep{kat} and its extensions in \citep{zhou2018simple,Kovalev-2020-loopless}. Other accelerated stochastic algorithms can be found in \citep{lan-2017-MP,catalyst,fercoq-2015-siam,xiao-2015-siam}. \citet{lan-2017-MP} proved the $\varOmega((\sqrt{n\kappa_s}+n)\log\frac{1}{\epsilon})$ lower bound for strongly convex and smooth stochastic optimization, and Katyusha achieves this lower bound.

\subsubsection{Stochastic Decentralized Algorithms}

To address the issue of large-scale distributed data, \citet{chen2012} and \citet{Ram-2010} extended the DGD method to the distributed stochastic gradient descent (DSGD). To further improve the convergence of stochastic decentralized algorithms, \citet{pushiMP} combined DSGD with gradient tracking, \citet{Mokhtari-2016} combined EXTRA with SAGA, and proposed the decentralized double stochastic averaging gradient algorithm, \citet{ranxin2019} combined gradient tracking with the VR technique, and two algorithms are proposed, namely, GT-SAGA and GT-SVRG. \citet{BoyueLi2020} generalized the approximate Newton-type method called DANE with gradient tracking and variance reduction. See \citep{ranxin2020} for a detailed review for the non-accelerated stochastic decentralized algorithms. \citet{ADFS} proposed an accelerated decentralized stochastic algorithm called ADFS for problems with finite-sum structures, which achieves the optimal $\bO(\sqrt{\kappa_b\kappa_c}\log\frac{1}{\epsilon})$ communication complexity. However, ADFS is a dual-based method, and it needs to compute the dual gradient at each iteration, which is expensive. Recently, \citet{DVR} further proposed a dual-free decentralized method with variance reduction, called DVR, which achieves the $\bO(\kappa_b\sqrt{\kappa_c}\log\frac{1}{\epsilon})$ communication complexity and the $\bO((\kappa_s+n)\log\frac{1}{\epsilon})$ stochastic gradient computation complexity. These complexities can be further improved to $\widetilde \bO(\sqrt{\kappa_b\kappa_c}\sqrt{\frac{n\kappa_b}{\kappa_s}}\log\frac{1}{\epsilon})$ and $\widetilde \bO((\sqrt{n\kappa_s}+n)\log\frac{1}{\epsilon})$ by the Catalyst acceleration \citep{catalyst}, respectively, where $\widetilde \bO$ hides the poly-logarithmic factor, which is at least $\bO(\log\kappa_b)$\footnote{See Proposition 17 in \citep{catalyst} and Corollary 7 in \citep{li-2019-extra}.}. We see that DVR-Catalyst achieves the optimal stochastic gradient computation complexity up to log factor. However, its communication cost is increased by a factor $\bO(\sqrt{\frac{n\kappa_b}{\kappa_s}})$ compared with ADFS, which is always much larger than 1 in machine learning applications\footnote{As discussed in Section \ref{sec:stointro} for the comparison between the VR methods and gradient descent, stochastic methods have no advantage when $\kappa_s\approx n\kappa_b$. We often assume $\kappa_s\ll n\kappa_b$.}, and it is of the $\bO(\sqrt{n})$ order in the worst case. \citet{ADFS} proved the $ \varOmega((\sqrt{n\kappa_s}+n)\log\frac{1}{\epsilon})$ stochastic gradient computation and the $\varOmega(\sqrt{\kappa_b\kappa_c}\log\frac{1}{\epsilon})$ communication lower bounds. The study on acceleration for the general stochastic problems without finite-sum structures can be found in \citep{Dvinskikh2019}, \citep{Dvinskikh20192}, and \citep{Fallah2019}. See the recent review \citep{Dvinskikh20193} for the accelerated stochastic decentralized algorithms.

\subsection{Contributions}

Although both the decentralized methods and stochastic methods have been well studied, their combination still has much work to do. For example, as far as we know, there is no gradient-type stochastic decentralized method achieving both the state-of-the-art communication and stochastic gradient computation complexities (either accelerated or non-accelerated) of the decentralized methods and stochastic methods simultaneously. In this paper we aim to address this issue. Our contributions include:

\begin{table*}
\caption{Comparisons of various state-of-the-art decentralized and stochastic algorithms. See (\ref{def_f}) and (\ref{define-kappa-c}) for the definitions of $\kappa_b$, $\kappa_s$, and $\kappa_c$. $\widetilde \bO$ hides the poly-logarithmic factors. The complexities of Acc-VR-EXTRA and Acc-VR-DIGing hold under some conditions to restrict the size of $\kappa_c$. See part 1 of Remarks \ref{acc-remark} and \ref{acc-remark2}. Acc-VR-EXTRA-CA and Acc-VR-DIGing-CA remove these restrictions.}\label{table-comp}
\begin{center}
\footnotesize
\begin{tabular}{|c|c|c|c|}
\hline
 Methods & \tabincell{c}{stochastic gradient\\computation complexity}      & \tabincell{c}{communication\\complexity} & \tabincell{c}{dual\\ gradient\\based ?} \\
\hline
\multicolumn{4}{c}{Full batch decentralized algorithms}\\
\hline
 \tabincell{c}{EXTRA\\ \citep{shi2015extra}\\\citep{li-2019-extra}} & $\bO\left(n\left(\kappa_b+\kappa_c\right)\log\frac{1}{\epsilon}\right)$ & $\bO\left(\left(\kappa_b+\kappa_c\right)\log\frac{1}{\epsilon}\right)$ & no\\
 \tabincell{c}{DIGing\\ \citep{shi2017}\\\citep{sulaiman2020}} & $\bO\left(n\left(\kappa_b+\kappa_c^2\right)\log\frac{1}{\epsilon}\right)$ & $\bO\left(\left(\kappa_b+\kappa_c^2\right)\log\frac{1}{\epsilon}\right)$ & no\\
 \tabincell{c}{MSDA+CA\\ \citep{dasent}}  & $\bO\left(n\sqrt{\kappa_b}\log\frac{1}{\epsilon}\right)$ & $\bO\left(\sqrt{\kappa_b\kappa_c}\log\frac{1}{\epsilon}\right)$ & yes\\
 \tabincell{c}{APAPC+CA\\ \citep{richtaric-2020-dist}}  & $\bO\left(n\sqrt{\kappa_b}\log\frac{1}{\epsilon}\right)$ & $\bO\left(\sqrt{\kappa_b\kappa_c}\log\frac{1}{\epsilon}\right)$ & no\\
\hline
\multicolumn{4}{c}{Stochastic algorithms on a single machine}\\
\hline
 \tabincell{c}{VR methods\\ \citep{SAG}\\\citep{SAGA}\\\citep{SVRG}} & $\bO\left(\left(\kappa_s+n\right)\log\frac{1}{\epsilon}\right)$ & $\backslash$ & no\\
 \tabincell{c}{Katyusha\\ \citep{kat}}  & $\bO\left(\left(\sqrt{n\kappa_s}+n\right)\log\frac{1}{\epsilon}\right)$ & $\backslash$ & no\\
\hline
\multicolumn{4}{c}{Stochastic decentralized algorithms}\\
\hline
 \tabincell{c}{GT-SAGA\\ \citep{ranxin2019}} & $\bO\left(\left(\kappa_s^2\kappa_c^2+n\right)\log\frac{1}{\epsilon}\right)$ & $\bO\left(\left(\kappa_s^2\kappa_c^2+n\right)\log\frac{1}{\epsilon}\right)$ & no\\
 \tabincell{c}{GT-SVRG\\ \citep{ranxin2019}} & $\bO\left(\left(\kappa_s^2\kappa_c^2\log\kappa_s+n\right)\log\frac{1}{\epsilon}\right)$ & $\bO\left(\left(\kappa_s^2\kappa_c^2\log\kappa_s+n\right)\log\frac{1}{\epsilon}\right)$ & no\\
 \tabincell{c}{ADFS\\ \citep{ADFS}} & $\bO\left(\left(\sqrt{n\kappa_s}+n\right)\log\frac{1}{\epsilon}\right)$ & $\bO\left(\sqrt{\kappa_b\kappa_c}\log\frac{1}{\epsilon}\right)$ & yes\\
 \tabincell{c}{DVR+CA\\ \citep{DVR}} & $\bO\left(\left(\kappa_s+n\right)\log\frac{1}{\epsilon}\right)$ & $\bO\left(\kappa_b\sqrt{\kappa_c}\log\frac{1}{\epsilon}\right)$ & no\\
 \tabincell{c}{DVR+Catalyst\\ \citep{DVR}} & $\widetilde \bO\left(\left(\sqrt{n\kappa_s}+n\right)\log\frac{1}{\epsilon}\right)$ & $\widetilde \bO\left(\sqrt{\kappa_b\kappa_c}\sqrt{\frac{n\kappa_b}{\kappa_s}}\log\frac{1}{\epsilon}\right)$ & no\\
\hline
\tabincell{c}{Lower bounds\\ \citep{ADFS}}   & $\varOmega\left(\left(\sqrt{n\kappa_s}+n\right)\log\frac{1}{\epsilon}\right)$ & $\varOmega\left(\sqrt{\kappa_b\kappa_c}\log\frac{1}{\epsilon}\right)$ & $\backslash$\\
\hline
\multicolumn{4}{c}{Our results for stochastic decentralized optimization}\\
\hline
 VR-EXTRA        & $\bO\left(\left(\kappa_s+n\right)\log\frac{1}{\epsilon}\right)$ & $\bO\left(\left(\kappa_b+\kappa_c\right)\log\frac{1}{\epsilon}\right)$ & no\\&&&\\
 VR-DIGing       & $\bO\left(\left(\kappa_s+n\right)\log\frac{1}{\epsilon}\right)$ & $\bO\left(\left(\kappa_b+\kappa_c^2\right)\log\frac{1}{\epsilon}\right)$ & no\\&&&\\
 Acc-VR-EXTRA    & $\bO\left(\left(\sqrt{n\kappa_s}+n\right)\log\frac{1}{\epsilon}\right)$ & $\bO\left(\sqrt{\kappa_b\kappa_c}\log\frac{1}{\epsilon}\right)$ & no\\&&&\\
 Acc-VR-DIGing   & $\bO\left(\left(\sqrt{n\kappa_s}+n\right)\log\frac{1}{\epsilon}\right)$ & $\bO\left(\kappa_c\sqrt{\kappa_b}\log\frac{1}{\epsilon}\right)$ & no\\&&&\\
 Acc-VR-EXTRA+CA   & $\bO\left(\left(\sqrt{n\kappa_s}+n\right)\log\frac{1}{\epsilon}\right)$ & $\bO\left(\sqrt{\kappa_b\kappa_c}\log\frac{1}{\epsilon}\right)$ & no\\&&&\\
 Acc-VR-DIGing+CA   & $\bO\left(\left(\sqrt{n\kappa_s}+n\right)\log\frac{1}{\epsilon}\right)$ & $\bO\left(\sqrt{\kappa_b\kappa_c}\log\frac{1}{\epsilon}\right)$ & no\\
\hline
\hline
\end{tabular}
\end{center}
\end{table*}

\begin{enumerate}
\item We extend the widely used EXTRA and DIGing methods to deal with large-scale distributed data by combining them with the powerful VR technique. We prove the $\bO((\kappa_s+n)\log\frac{1}{\epsilon})$ stochastic gradient computation complexity and the $\bO((\kappa_b+\kappa_c)\log\frac{1}{\epsilon})$ communication complexity for VR-EXTRA, which are the best complexities among the non-accelerated stochastic decentralized methods as far as we know. The stochastic gradient computation complexity is the same as the single-machine VR methods, while the communication complexity is the same as the full batch EXTRA. For VR-DIGing, we establish the $\bO((\kappa_s+n)\log\frac{1}{\epsilon})$ stochastic gradient computation complexity and the $\bO((\kappa_b+\kappa_c^2)\log\frac{1}{\epsilon})$ communication complexity. The latter one is a little worse than that of VR-EXTRA on the dependence of $\kappa_c$. Due to the parallelism across $m$ nodes, running VR-EXTRA and VR-DIGing with $mn$ samples is as fast as running the single-machine VR methods with $n$ samples.

\item To further speed up the convergence, we combine EXTRA and DIGing with the accelerated VR technique. The proposed Acc-VR-EXTRA achieves the optimal $\bO((\sqrt{n\kappa_s}+n)\log\frac{1}{\epsilon})$ stochastic gradient computation complexity and the optimal $\bO(\sqrt{\kappa_b\kappa_c}\log\frac{1}{\epsilon})$ communication complexity under some mild conditions to restrict the size of $\kappa_c$. The proposed Acc-VR-DIGing has the optimal $\bO((\sqrt{n\kappa_s}+n)\log\frac{1}{\epsilon})$ stochastic gradient computation complexity and the $\bO(\kappa_c\sqrt{\kappa_b}\log\frac{1}{\epsilon})$ communication complexity with a little worse dependence on $\kappa_c$. The two methods are implemented in a single loop, and thus they are practical. We further combine Acc-VR-EXTRA and Acc-VR-DIGing with the Chebyshev acceleration to remove the restrictions on the size of $\kappa_c$, and improve the communication complexity of Acc-VR-DIGing to be optimal. Our complexities do not hide any poly-logarithmic factor. To the best of our knowledge, our methods are the first to exactly achieve both the optimal stochastic gradient computation complexity and the communication complexity in the class of gradient-type methods.

\end{enumerate}

Table \ref{table-comp} summarizes the complexity comparisons to the state-of-the-art stochastic decentralized methods. Our VR-EXTRA has the same stochastic gradient computation complexity as DVR-CA, but our communication cost is lower than theirs when $\kappa_c\leq \bO(\kappa_b^2)$. On the other hand, by combining with Chebyshev acceleration, our VR-EXTRA and VR-DIGing can also obtain the $\bO(\kappa_b\sqrt{\kappa_c}\log\frac{1}{\epsilon})$ communication complexity. For the accelerated methods, our Acc-VR-EXTRA-CA and Acc-VR-DIGing-CA outperform DVR-Catalyst on the stochastic gradient computation complexity at least by the poly-logarithmic factor $\bO(\log\kappa_b)$, and our communication cost is also lower than that of DVR-Catalyst by the factor $\bO\left(\sqrt{\frac{n\kappa_b}{\kappa_s}}\right)$. On the other hand, DVR and its Catalyst acceleration require $\bO(np)$ memory at each node, while our methods only need $\bO(p)$ memory\footnote{This is similar to the memory cost comparison between SAG/SAGA and SVRG.}.
Although ADFS has the same complexities as our Acc-VR-EXTRA-CA and Acc-VR-DIGing-CA, our methods are gradient-type methods, while theirs requires to compute the dual gradient at each iteration, which is much more expensive.

\section{Non-accelerated Variance Reduced EXTRA and DIGing}
We first review the classical EXTRA and DIGing methods in Section \ref{sec-non-acc-rev}. Then  we develop the variance reduced EXTRA and DIGing in Sections \ref{sec-non-acc-vr} and \ref{sec-non-acc-vr2}. At last, we discuss the complexities of the proposed methods in Section \ref{sec-non-acc-comp}.
\subsection{Review of EXTRA and DIGing}\label{sec-non-acc-rev}
A traditional way to analyze the decentralized optimization model is to write problem (\ref{problem}) in the following equivalent manner:
\begin{eqnarray}\notag
\min_{x_{(1)},\cdots,x_{(m)}} \sum_{i=1}^m f_{(i)}(x_{(i)}),\quad \mbox{s.t.}\quad x_{(1)}=x_{(2)}=\cdots=x_{(m)}.
\end{eqnarray}
Following \citep{sulaiman2020} and using the notations in (\ref{aggregate}), we further reformulate the above problem as the following linearly constrained problem:
\begin{eqnarray}\label{problem_consd}
\min_{\x} f(\x)+\frac{1}{2\alpha}\|\V\x\|^2,\quad \mbox{s.t.}\quad \U\x=0,
\end{eqnarray}
where the symmetric matrices $\U\in\R^{m\times m}$ and $\V\in\R^{m\times m}$ satisfy
\begin{equation}
\U\x=0\Leftrightarrow  x_{(1)}=\cdots=x_{(m)}\quad\mbox{and}\quad \V\x=0\Leftrightarrow x_{(1)}=\cdots=x_{(m)}.\label{UV-define}
\end{equation}
where $\frac{1}{2\alpha}\|\V\x\|^2$ can be regarded as the augmented term in the augmented Lagrange method \citep{Bertsekas-1982}, which may speed up the convergence than the methods based on the pure Lagrangian function. Introducing the following augmented Lagrangian function
\begin{equation}\notag
L(\x,\lambda)=f(\x)+\frac{1}{2\alpha}\|\V\x\|^2+\frac{1}{\alpha}\<\U\x,\blambda\>,
\end{equation}
we can apply the basic gradient method with a step-size $\alpha$ in the Gauss$-$Seidel-like order to compute the saddle point of problem (\ref{problem_consd}), which leads to the following iterations \citep{sulaiman2020,shi2017}:
\begin{eqnarray}
\begin{aligned}\label{alm}
&\x^{k+1}=\x^k-\left(\alpha\nabla f(\x^k)+\U\blambda^k+\V^2\x^k\right),\\
&\blambda^{k+1}=\blambda^k+\U\x^{k+1}.
\end{aligned}
\end{eqnarray}

Iteration (\ref{alm}) is a unified algorithmic framework, and different choices of $\U$ and $\V$ give different methods \citep{sulaiman2020}. Specifically, when we choose $\U=\sqrt{\frac{\I-\W}{2}}$ and $\V=\sqrt{\frac{\I-\W}{2}}$, (\ref{alm}) reduces to the famous EXTRA algorithm \citep{shi2015extra}, which consists of the following iterations:
\begin{eqnarray}
\begin{aligned}\notag
\x^{k+1}=(\I+\W)\x^k-\frac{\I+\W}{2}\x^{k-1}-\alpha\left(\nabla f(\x^k)-\nabla f(\x^{k-1})\right).
\end{aligned}
\end{eqnarray}
When we choose $\U=\I-\W$ and $\V=\sqrt{\I-\W^2}$, (\ref{alm}) reduces to the DIGing \citep{shi2017} method with the following iterations:
\begin{eqnarray}
\begin{aligned}\notag
&\s^{k+1}=\W\s^k+\nabla f(\x^k)-\nabla f(\x^{k-1}),\\
&\x^{k+1}=\W\x^k-\alpha\s^{k+1}.
\end{aligned}
\end{eqnarray}
Both EXTRA and DIGing rely on tracking the differences of gradients at each iteration.
\subsection{Development of VR-EXTRA and VR-DIGing}\label{sec-non-acc-vr}
Now, we come to extend EXTRA and DIGing with the variance reduction technique proposed in SVRG \citep{SVRG}. Specifically, SVRG maintains a snapshot vector $w_{(i)}^k$ after several SGD iterations, and keeps an iterative estimator $\widetilde\nabla f_{(i)}(x_{(i)}^k)=\nabla f_{(i),j}(x_{(i)}^k)-\nabla f_{(i),j}(w_{(i)}^k)+\nabla f_{(i)}(w_{(i)}^k)$ of the full batch gradient for some randomly selected $j$. When extending EXTRA and DIGing to stochastic decentralized optimization, a straightforward idea is to replace the local gradient $\nabla f_{(i)}(x_{(i)}^k)$ in (\ref{alm}) by its VR estimator $\widetilde \nabla f_{(i)}(x_{(i)}^k)$. However, in this way the resultant algorithm needs the same number of stochastic gradient evaluations and communication rounds to precision $\epsilon$. As summarized in Table \ref{table-comp}, our goal is to provide computation and communication complexities matching those of SVRG and EXTRA/DIGing, respectively, which are not equal. To address this issue, we use the mini-batch VR technique, that is, select $b$ independent samples with replacement as a mini-batch $\S_{(i)}$, and use this mini-batch to update the VR estimator. By carefully choosing the mini-batch size $b$, we can balance the communication and stochastic gradient computation costs. Moreover, to simplify the algorithm development and analysis, we adopt the loopless SVRG proposed in \citep{Kovalev-2020-loopless}. Combining the above ideas, we have the following VR variant of (\ref{alm}) described in a distributed way:
\begin{subequations}
\begin{align}
&\nabla_{(i)}^k=\frac{1}{b}\sum_{j\in\S_{(i)}^k}\frac{1}{np_{(i),j}}\left(\nabla f_{(i),j}(x_{(i)}^k)-\nabla f_{(i),j}(w_{(i)}^k)\right)+\nabla f_{(i)}(w_{(i)}^k), \quad\forall i,\label{non-acc-s1}\\
&x_{(i)}^{k+1}=x_{(i)}^k-\left(\alpha\nabla_{(i)}^k+\sum_{j\in\N_{(i)}}\U_{ij}\blambda_{(j)}^k+\sum_{j\in\N_{(i)}}(\V^2)_{ij} x_{(j)}^k\right),\quad\forall i,\label{non-acc-s2}\\
&\blambda_{(i)}^{k+1}=\blambda_{(i)}^k+\sum_{j\in\N_{(i)}}\U_{ij}x_{(j)}^{k+1},\quad\forall i,\label{non-acc-s3}\\
&w_{(i)}^{k+1}=\left\{
  \begin{array}{l}
    x_{(i)}^k\mbox{ with probability }\frac{b}{n},\\
    w_{(i)}^k\mbox{ with probability }1-\frac{b}{n},
  \end{array}
\right. \forall i,\label{non-acc-s4}
\end{align}
\end{subequations}
where the mini-batch VR estimator update rule (\ref{non-acc-s1}) is motivated by \citep{kat}, in which each sample $j$ on node $i$ is selected with probability $p_{(i),j}=\frac{L_{(i),j}}{\sum_{j=1}^nL_{(i),j}}$. The probabilistic update of the snapshot vector in (\ref{non-acc-s4}) is motivated by \citep{Kovalev-2020-loopless}, in which we update the full batch gradient $\nabla f_{(i)}(w_{(i)}^{k+1})$ if $w_{(i)}^{k+1}=x_{(i)}^k$; otherwise, we use the old one. Steps (\ref{non-acc-s2}) and (\ref{non-acc-s3}) come from (\ref{alm}), but replacing the local gradients by their VR estimators. In steps (\ref{non-acc-s1}) and (\ref{non-acc-s4}), each node selects $\S_{(i)}^k$ and computes $w_{(i)}^{k+1}$ independent of the other nodes.

At last, we write (\ref{non-acc-s1})-(\ref{non-acc-s4}) in the EXTRA/DIGing style. Similar to (\ref{aggregate}), we denote
\begin{eqnarray}
\begin{aligned}\label{aggrgate-nabla}
\quad\nabla^k=\left(
  \begin{array}{c}
    (\nabla_{(1)}^k)^T\\
    \vdots\\
    (\nabla_{(m)}^k)^T
  \end{array}
\right)
\end{aligned}
\end{eqnarray}
to simplify the algorithm description. From steps (\ref{non-acc-s2}) and (\ref{non-acc-s3}), we have
\begin{eqnarray}
\begin{aligned}\label{alg-cont1}
\x^{k+1}=(2\I-\U^2-\V^2)\x^k-(\I-\V^2)\x^{k-1}-\alpha\left(\nabla^k-\nabla^{k-1}\right)
\end{aligned}
\end{eqnarray}
in the compact form. Plugging $\U=\sqrt{\frac{\I-\W}{2}}$ and $\V=\sqrt{\frac{\I-\W}{2}}$ into (\ref{alg-cont1}), we have
\begin{eqnarray}
\begin{aligned}\notag
\x^{k+1}=(\I+\W)\x^k-\frac{\I+\W}{2}\x^{k-1}-\alpha\left(\nabla^k-\nabla^{k-1}\right),
\end{aligned}
\end{eqnarray}
which is the VR variant of EXTRA, called VR-EXTRA. Plugging $\U=\I-\W$ and $\V=\sqrt{\I-\W^2}$ into (\ref{alg-cont1}), we have
\begin{eqnarray}
\begin{aligned}\notag
\x^{k+1}=2\W\x^k-\W^2\x^{k-1}-\alpha\left(\nabla^k-\nabla^{k-1}\right),
\end{aligned}
\end{eqnarray}
which is further equivalent to the following method, called VR-DIGing,
\begin{eqnarray}
\begin{aligned}\notag
&\s^{k+1}=\W\s^k+\nabla^k-\nabla^{k-1},\\
&\x^{k+1}=\W\x^k-\alpha\s^{k+1}.
\end{aligned}
\end{eqnarray}

We see that VR-EXTRA and VR-DIGing are quite similar to the original EXTRA and DIGing. The only difference is that we replace the local gradients by their VR estimators. Thus the implementation is as simple as that of the original EXTRA and DIGing. We give the specific descriptions of VR-EXTRA and VR-DIGing in Algorithm \ref{extra} in a distributed way, including the parameter settings. To discuss EXTRA and DIGing in a unified framework, we denote
\begin{eqnarray}\label{kappa-def}
\kappa=2\kappa_c\mbox{ for EXTRA}\qquad\mbox{and}\qquad\kappa=\kappa_c^2\mbox{ for DIGing.}
\end{eqnarray}
See Lemma \ref{non-acc-lemma-uni} for the reason. We will use $\kappa$ frequently in this paper when we do not distinguish EXTRA and DIGing, and the readers can use (\ref{kappa-def}) to get the specific properties of EXTRA and DIGing, respectively.

\begin{algorithm}[t]
   \caption{VR-EXTRA and VR-DIGing}
   \label{extra}
\begin{algorithmic}
   \STATE Initialize: $x_{(i)}^0=w_{(i)}^0=x_{int}$, $\blambda_{(i)}^0=0$, compute $x_{(i)}^1$ and $w_{(i)}^1$ for all $i$ by (\ref{non-acc-s2}) and (\ref{non-acc-s4}), respectively. Let $\alpha=\bO(\frac{1}{\max\{L_f,\kappa\mu\}})$ and $b=\frac{\max\{\overline L_f,n\mu\}}{\max\{L_f,\kappa\mu\}}$, where $\kappa=2\kappa_c$ for EXTRA, and $\kappa=\kappa_c^2$ for DIGing. Let $s_{(i)}^1=\nabla f_{(i)}(w_{(i)}^0)$ for DIGing.
   \STATE Let distribution $\D_{(i)}$ be to output $j\in[1,n]$ with probability $p_{(i),j}=\frac{L_{(i),j}}{n\overline L_{(i)}}$.
   \FOR{$k=1,2,...$}
   \STATE Step 1: $\S_{(i)}^k\leftarrow b$ independent samples from $\D_{(i)}$ with replacement, $\forall i$,
   \STATE Step 2: Compute $\nabla_{(i)}^k$ by (\ref{non-acc-s1}), $\forall i$,
   \STATE Step 3: For EXTRA, compute $x_{(i)}^{k+1}$ by
   \begin{equation}\notag
   x_{(i)}^{k+1}=\left(x_{(i)}^k+\sum_{j\in\N_{(i)}}\W_{ij}x_{(j)}^k\right)-\frac{1}{2}\left(x_{(i)}^{k-1}+\sum_{j\in\N_{(i)}}\W_{ij}x_{(j)}^{k-1}\right)-\alpha\left(\nabla_{(i)}^k-\nabla_{(i)}^{k-1}\right), \forall i,
   \end{equation}
   \hspace*{1.2cm} For DIGing, compute $x_{(i)}^{k+1}$ by
   \begin{equation}\notag
   s_{(i)}^{k+1}=\sum_{j\in\N_{(i)}}\W_{ij}s_{(j)}^k+\nabla_{(i)}^k-\nabla_{(i)}^{k-1},\qquad x_{(i)}^{k+1}=\sum_{j\in\N_{(i)}}\W_{ij}x_{(j)}^k-\alpha s_{(i)}^{k+1},\quad\forall i,
   \end{equation}
   \STATE Step 4: Compute $w_{(i)}^{k+1}$ by (\ref{non-acc-s4}), $\forall i$.
   \ENDFOR
\end{algorithmic}
\end{algorithm}

\subsection{Extension to Large $\kappa$}\label{sec-non-acc-vr2}
The particular choice of the mini-batch size $b$ in Algorithm \ref{extra} may be smaller than 1 when $\kappa$ is large, which makes the algorithm meaningless. We discuss EXTRA and DIGing in a unified way in this section, so we use $\kappa$ in this section, which is defined by (\ref{kappa-def}). In fact, $b\geq 1$ if and only if $\kappa\leq\max\{\kappa_s,n\}$, see the proof of Theorem \ref{non-acc-theorem} in Section \ref{sec-proof}. In this section we consider the case of $\kappa>\max\{\kappa_s,n\}$.

Intuitively speaking, when $\kappa$ is very large such that $\kappa_b+\kappa\geq \kappa_s+n$, to reach the desired $\bO((\kappa_b+\kappa)\log\frac{1}{\epsilon})$ communication complexity and the $\bO((\kappa_s+n)\log\frac{1}{\epsilon})$ stochastic gradient computation complexity, as summarized in Table \ref{table-comp}, we should perform less than 1 stochastic gradient evaluation in average at each iteration. This observation motivates us to introduce some zero samples, that is to say, let $f_{(i),n+1}=\cdots=f_{(i),n'}=0$ for all $i$, and consider problem
\begin{eqnarray}\label{problem2}
\min_{x\in\R^p} \sum_{i=1}^m f_{(i)}'(x),\quad\mbox{where}\quad f_{(i)}'(x)=\frac{1}{n'}\sum_{j=1}^{n'} f_{(i),j}(x).
\end{eqnarray}
The zero samples do not spend time to compute the stochastic gradient. We see that problems (\ref{problem2}) and (\ref{problem}) are equivalent. To use Algorithm \ref{extra} to solve problem (\ref{problem2}), we denote
\begin{equation}\notag
L_{(i),j}=\frac{n\mu n'-n\overline L_{(i)}}{n'-n}\quad\mbox{for all}\quad  n<j\leq n',
\end{equation}
and let each sample be selected with probability $\frac{L_{(i),j}}{\sum_{j=1}^{n'}L_{(i),j}}$. Then  we select the samples in $[1,n]$ with probability $\frac{\overline L_{(i)}}{\mu n'}$, and select the zero samples with probability $1-\frac{\overline L_{(i)}}{\mu n'}$. It can be seen that $f_{(i)}'(x)$ is $\frac{nL_{(i)}}{n'}$-smooth and $\frac{n\mu}{n'}$-strongly convex. Define the following notations:
\begin{equation}\label{new-L2}
n'=\kappa,\quad \mu'=\frac{n\mu}{n'},\quad L_f'=\max_i\frac{nL_{(i)}}{n'}=\frac{nL_f}{n'},\quad \overline L_{(i)}'=\frac{\sum_{j=1}^{n'}L_{(i),j}}{n'},\quad \overline L_f'=\max_i \overline L_{(i)}'.
\end{equation}
We can easily check $\alpha=\bO(\frac{1}{\max\{L_f',\kappa\mu'\}})=\bO(\frac{1}{n\mu})$, and $b=\frac{\max\{\overline L_f',n'\mu'\}}{\max\{L_f',\kappa\mu'\}}=1$. See the proof of Theorem \ref{non-acc-theorem2} in Section \ref{sec-proof}. Then we can use Algorithm \ref{extra} to solve problem (\ref{problem2}).

\subsection{Complexities}\label{sec-non-acc-comp}
We prove the convergence of VR-EXTRA and VR-DIGing in a unified framework. From Assumption \ref{assumption_w}, we have the following easy-to-identify lemma, where the third inequality in (\ref{UVbound}) can be proved similarly to Lemma 4 in \citep{li-2018-pm}.
\begin{lemma}\label{non-acc-lemma-uni}
Suppose that Assumption \ref{assumption_w} holds with $\omega=0$ for EXTRA. Let $\U=\V=\sqrt{\frac{\I-\W}{2}}$. Then we have
\begin{eqnarray}
\quad\qquad\|\U\x\|^2\leq\|\V\x\|^2,\quad \|\V\x\|^2\leq\frac{1}{2}\|\x\|^2, \quad\mbox{and}\quad\|\U\blambda\|^2\geq\frac{1}{\kappa}\|\blambda\|^2,\forall \blambda\in\mbox{Span}(\U),\label{UVbound}
\end{eqnarray}
where $\kappa=\frac{2}{1-\sigma_2(\W)}=2\kappa_c$. Suppose that Assumption \ref{assumption_w} holds with $\omega=\frac{\sqrt{2}}{2}$ for DIGing. Let $\U=\I-\W$ and $\V=\sqrt{\I-\W^2}$. Then (\ref{UVbound}) also holds with $\kappa=\frac{1}{(1-\sigma_2(\W))^2}=\kappa_c^2$.
\end{lemma}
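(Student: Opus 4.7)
The plan is to treat this as a purely spectral statement about the weight matrix $\W$, handling the EXTRA and DIGing cases in parallel and reducing every inequality to an eigenvalue comparison. The single underlying fact I will use throughout is that $\W$ is symmetric, its eigenvalues lie in $[\omega,1]$ with $\sigma_1(\W)=1$ attained by $\1/\sqrt{m}$, and the remaining eigenvalues are bounded above by $\sigma_2(\W)$. Diagonalizing $\W=Q\Lambda Q^T$ at the outset, every subsequent estimate reduces to an entrywise inequality on the diagonal of $\Lambda$, and in particular each claimed operator inequality on $\U$ and $\V$ is a direct consequence of a scalar inequality involving $\sigma_2(\W)$ and $\omega$.

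For the EXTRA choice $\U=\V=\sqrt{(\I-\W)/2}$, the first inequality is immediate with equality. The second follows from $\omega=0 \Rightarrow \W\succeq\0 \Rightarrow \I-\W\preceq\I$, giving $\V^2\preceq\frac{1}{2}\I$. For the third, since $\U$ is symmetric PSD with $\mbox{Ker}(\U)=\mbox{Ker}(\I-\W)=\mbox{Span}(\1)$, its image is $\mbox{Span}(\U)=\mbox{Span}(\1)^\perp$; on that subspace the smallest eigenvalue of $\U^2=(\I-\W)/2$ is $(1-\sigma_2(\W))/2 = 1/(2\kappa_c)=1/\kappa$. For the DIGing choice $\U=\I-\W$ and $\V=\sqrt{\I-\W^2}$, the first inequality $\U^2\preceq\V^2$ reduces via $\V^2-\U^2=2\W(\I-\W)\succeq\0$, which holds because $\omega=\sqrt{2}/2$ gives $\W\succeq\0$, $\sigma_1(\W)=1$ gives $\I-\W\succeq\0$, and the two factors commute. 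The second inequality becomes $\W^2\succeq\frac{1}{2}\I$, direct from $\W\succeq(\sqrt{2}/2)\I$. The third repeats the EXTRA argument: $\mbox{Span}(\U)=\mbox{Span}(\1)^\perp$, and on this subspace the smallest eigenvalue of $(\I-\W)^2$ is $(1-\sigma_2(\W))^2=1/\kappa_c^2=1/\kappa$.

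The one step that requires real care is the third inequality, because $\blambda$ is restricted to $\mbox{Span}(\U)$ rather than all of $\R^m$; without this restriction the bound $\|\U\blambda\|^2\geq\frac{1}{\kappa}\|\blambda\|^2$ would fail along the $\1$-direction, on which $\U^2$ has eigenvalue zero. I would make this rigorous by writing $\blambda=\U\y$, expanding $\y$ in the eigenbasis of $\W$, and observing that $\U\1=\0$ annihilates the $\1$-component so that only eigendirections with eigenvalue at most $\sigma_2(\W)$ contribute, each giving an eigenvalue of $\U^2$ of at least $1/\kappa$. This matches the argument the authors attribute to Lemma~4 in \citep{li-2018-pm}, and is the main (and only) non-trivial point in the proof.
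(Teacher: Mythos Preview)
Your proposal is correct and follows exactly the approach the paper has in mind: the paper gives no explicit proof, calling the lemma ``easy-to-identify'' and pointing only to Lemma~4 of \citep{li-2018-pm} for the third inequality, which is precisely the image/kernel spectral argument you spell out. Your diagonalization of $\W$ and the reduction of each bound to a scalar eigenvalue inequality is the intended route, and your identification of the third inequality as the only non-trivial step (requiring the restriction $\blambda\in\mbox{Span}(\U)=\mbox{Span}(\1)^\perp$ to exclude the zero eigenvalue) matches the paper's own emphasis.
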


Denote the following set of random variables:
\begin{equation}
\S^k=\cup_{i=1}^m\S_{(i)}^k,\qquad \xi^k=\{\S^0,\w^1,\S^1,\w^2\cdots,\S^{k-1},\w^k\}.\notag
\end{equation}
The next theorem gives the communication complexity and stochastic gradient computation complexity of algorithm (\ref{non-acc-s1})-(\ref{non-acc-s4}) in a unified way.

\begin{theorem}\label{non-acc-theorem}
Suppose that Assumption \ref{assumption_f} holds, and $\U$ and $\V$ satisfy (\ref{UV-define}) and (\ref{UVbound}). Let $\alpha=\frac{1}{28\max\{L_f,\kappa\mu\}}$ and $\blambda^0=0$.
\begin{enumerate}
\item If $\kappa\leq \max\{\kappa_s,n\}$, let $b=\frac{\max\{\overline L_f,n\mu\}}{\max\{L_f,\kappa\mu\}}$. Then algorithm (\ref{non-acc-s1})-(\ref{non-acc-s4}) requires the time of $\bO((\kappa_b+\kappa)\log\frac{1}{\epsilon})$ communication rounds and $\bO((\kappa_s+n)\log\frac{1}{\epsilon})$ stochastic gradient evaluations to find $\x^k$ such that $\E_{\xi^k}[\|\x^k-\x^*\|^2]\leq\epsilon$.

\item If $\kappa\geq \max\{\kappa_s,n\}$, let $b=1$. Then algorithm (\ref{non-acc-s1})-(\ref{non-acc-s4}) requires the time of $\bO((\kappa_b+\kappa)\log\frac{1}{\epsilon})$ communication rounds and $\bO((\kappa_b+\kappa)\log\frac{1}{\epsilon})$ stochastic gradient evaluations to find $\x^k$ such that $\E_{\xi^k}\big[\|\x^k-\x^*\|^2\big]\leq\epsilon$.
\end{enumerate}
\end{theorem}

\begin{remark}\label{remark3}
Let's explain the time of one communication rounds and one stochastic gradient evaluation. At each iteration, algorithm (\ref{non-acc-s1})-(\ref{non-acc-s4}) performs one round of communication, that is, each node $i$ receives information $x_{(j)}^k$ and $\blambda_{(j)}^k$ from its neighbors for all $j\in\N_{(i)}$. Then each node $i$ selects $\S_{(i)}^k$ randomly and computes $\nabla_{(i)}^k$ with $b$ stochastic gradient evaluations. $\nabla f_{(i)}(w_{(i)}^{k+1})$ is updated with probability $b/n$, and each time with $n$ stochastic gradient evaluations. So each node computes $b$ stochastic gradients in average at each iteration. Since the computation is performed in parallel across all the nodes, we say that each iteration requires the time of one communication round and $b$ stochastic gradient evaluations in average. 
\end{remark}

From Theorem \ref{non-acc-theorem}, we see that when $\kappa\geq\max\{\kappa_s,n\}$, the stochastic gradient computation cost increases to $\bO((\kappa_b+\kappa)\log\frac{1}{\epsilon})$. We can use the zero-sample strategy described in Section \ref{sec-non-acc-vr2} to reduce the computation cost to $\bO((\kappa_s+n)\log\frac{1}{\epsilon})$, as described in the following theorem.
\begin{theorem}\label{non-acc-theorem2}
Suppose that Assumption \ref{assumption_f} and conditions (\ref{UV-define}) and (\ref{UVbound}) hold. Assume $\kappa> \max\{\kappa_s,n\}$. Applying Algorithm \ref{extra} to solve problem (\ref{problem2}), it requires the time of $\bO((\kappa_b+\kappa)\log\frac{1}{\epsilon})$ communication rounds and $\bO((\kappa_s+n)\log\frac{1}{\epsilon})$ stochastic gradient evaluations to find an $\epsilon$-precision solution of problem (\ref{problem}) such that $\E_{\xi^k}[\|\x^k-\x^*\|^2]\leq\epsilon$.
\end{theorem}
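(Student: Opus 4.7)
The plan is to reduce Theorem \ref{non-acc-theorem2} to Theorem \ref{non-acc-theorem} by applying it to the augmented problem (\ref{problem2}) and then showing that the augmentation does not actually inflate the wall-clock cost of gradient evaluations. First I would record that the two problems are equivalent: since $f_{(i),j}\equiv 0$ for $n<j\le n'$, we have $f_{(i)}'(x)=\tfrac{n}{n'}f_{(i)}(x)$, so both problems share the same minimizer $x^*$, and $f_{(i)}'$ is $\mu'$-strongly convex and $L_f'$-smooth with the constants given in (\ref{new-L2}). The ``virtual'' smoothness constants $L_{(i),j}=\tfrac{n\mu n'-n\overline L_{(i)}}{n'-n}$ assigned to the zero samples are valid upper bounds on the true (zero) Lipschitz constant of $\nabla f_{(i),j}$, and hence Assumption \ref{assumption_f} is satisfied on the augmented problem.

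Next I would verify the hypothesis of Theorem \ref{non-acc-theorem} for (\ref{problem2}). A direct computation from (\ref{new-L2}) gives $\kappa_b'=L_f'/\mu'=\kappa_b$ and $\kappa_s'=\overline L_f'/\mu'=n'=\kappa$; the network quantities are unchanged, so $\kappa'=\kappa$. Hence $\kappa'\le\max\{\kappa_s',n'\}=\kappa$ holds with equality, and the step-size/mini-batch formulas in Algorithm \ref{extra} specialize to $\alpha=\bO(1/(n\mu))$ and $b=1$, exactly as stated in Section \ref{sec-non-acc-vr2}. Applying Theorem \ref{non-acc-theorem} then yields an iterate $\x^k$ with $\E_{\xi^k}\|\x^k-\x^*\|^2\le\epsilon$ after $K=\bO((\kappa_b'+\kappa')\log\tfrac{1}{\epsilon})=\bO((\kappa_b+\kappa)\log\tfrac{1}{\epsilon})$ iterations, each performing one communication round; this is already the desired communication bound.

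The main obstacle, and the only part of the proof that is not a direct citation, is the gradient count. A na\"ive reading of Theorem \ref{non-acc-theorem} would give $\bO((\kappa_s'+n')\log\tfrac{1}{\epsilon})=\bO(\kappa\log\tfrac{1}{\epsilon})$, which is \emph{worse} than the target. The plan here is to re-bookkeep the cost per iteration along the lines of Remark \ref{remark3}, exploiting that evaluating $\nabla f_{(i),j}$ for $j>n$ is free. In the VR estimator (\ref{non-acc-s1}), a real (non-zero) sample is drawn with probability $\sum_{j=1}^n p_{(i),j}'=\overline L_{(i)}/(\mu\kappa)\le\kappa_s/\kappa$, so with $b=1$ the expected number of non-trivial stochastic gradient computations per iteration from the VR step is at most $\kappa_s/\kappa$. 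The snapshot refresh in (\ref{non-acc-s4}) fires with probability $b/n'=1/\kappa$ and then costs $n$ real gradient evaluations (the $n'-n$ zero samples contribute nothing to $\nabla f_{(i)}'(w_{(i)}^{k+1})$), contributing another $n/\kappa$ in expectation. Thus the expected per-iteration cost is $(\kappa_s+n)/\kappa$, and multiplying by $K=\bO(\kappa\log\tfrac{1}{\epsilon})$ (using $\kappa>\kappa_b$ so $\kappa_b+\kappa=\bO(\kappa)$) gives a total of $\bO((\kappa_s+n)\log\tfrac{1}{\epsilon})$ stochastic gradient evaluations in expectation, completing the proof.
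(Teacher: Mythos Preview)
Your proposal is correct and follows essentially the same route as the paper: apply the non-accelerated convergence result to the augmented problem (\ref{problem2}) using the primed constants from (\ref{new-L2}), observe that $\kappa_s'=n'=\kappa$ so the hypothesis is met with $b=1$, and then re-count gradient evaluations by separating the probability $\overline L_{(i)}/(\mu\kappa)$ of drawing a non-zero sample in (\ref{non-acc-s1}) from the probability $1/\kappa$ of a snapshot refresh costing $n$ real gradients. The paper cites Lemma \ref{non-acc-lemma-rate} directly rather than Theorem \ref{non-acc-theorem}, and adds a sentence checking that the zero functions still satisfy the smoothness inequality (\ref{smooth-cond}) with the artificial positive constants $L_{(i),j}$, which you cover implicitly by noting these are valid upper bounds; otherwise the arguments coincide.
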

We see that by introducing the zero samples with carefully designed $n'$ and $L_{(i),j}$ for $n<j\leq n'$, the complexities in Theorem \ref{non-acc-theorem2} keep the same as those in part one of Theorem \ref{non-acc-theorem}.

For the particular VR-EXTRA and VR-DIGing methods, we have the following complexities accordingly, where we replace $\kappa$ in Theorems \ref{non-acc-theorem} and \ref{non-acc-theorem2} by $2\kappa_c$ and $\kappa_c^2$, respectively.
\begin{corollary}
Suppose that Assumptions \ref{assumption_f} and \ref{assumption_w} hold with $\omega=0$. Use the zero-sample strategy if $2\kappa_c\geq\max\{\kappa_s,n\}$. Then the VR-EXTRA method in Algorithm \ref{extra} requires the time of $\bO((\kappa_b+\kappa_c)\log\frac{1}{\epsilon})$ communication rounds and $\bO((\kappa_s+n)\log\frac{1}{\epsilon})$ stochastic gradient evaluations to find $\x^k$ such that $\E_{\xi^k}[\|\x^k-\x^*\|^2]\leq\epsilon$.
\end{corollary}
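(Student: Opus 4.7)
The corollary is a direct specialization of Theorems~\ref{non-acc-theorem} and \ref{non-acc-theorem2} to the EXTRA instantiation $\U=\V=\sqrt{(\I-\W)/2}$, so the plan is essentially a bookkeeping argument that threads the hypotheses through the two theorems and verifies that all required structural conditions on $\U,\V$ hold. First I would check Assumption~\ref{assumption_w} with $\omega=0$ is the right assumption for EXTRA: this is what Lemma~\ref{non-acc-lemma-uni} asks for, and the lemma then supplies us with (i)~$\U\x=0\Leftrightarrow \V\x=0\Leftrightarrow x_{(1)}=\cdots=x_{(m)}$ (so condition (\ref{UV-define}) holds, since $\mathrm{Ker}(\I-\W)=\mathrm{Span}(\1)$ when the graph is connected), and (ii)~the three inequalities of (\ref{UVbound}) with $\kappa=2\kappa_c$.

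Next I would split into the two regimes appearing in the body of the section. If $2\kappa_c\le\max\{\kappa_s,n\}$, i.e.\ $\kappa\le\max\{\kappa_s,n\}$, then Theorem~\ref{non-acc-theorem} applies verbatim with the step-size $\alpha$ and mini-batch size $b$ given in Algorithm~\ref{extra}, and yields $\bO((\kappa_b+\kappa)\log\tfrac{1}{\epsilon})$ communication rounds together with $\bO((\kappa_s+n)\log\tfrac{1}{\epsilon})$ stochastic gradient evaluations. Substituting $\kappa=2\kappa_c$ absorbs the constant $2$ into the $\bO(\cdot)$ and gives exactly the communication bound $\bO((\kappa_b+\kappa_c)\log\tfrac{1}{\epsilon})$ claimed.

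If instead $2\kappa_c>\max\{\kappa_s,n\}$, the corollary tells us to apply the zero-sample strategy of Section~\ref{sec-non-acc-vr2}. Then I would invoke Theorem~\ref{non-acc-theorem2} in place of Theorem~\ref{non-acc-theorem}: problem (\ref{problem}) is reformulated as the equivalent finite-sum problem (\ref{problem2}) with $n'=\kappa=2\kappa_c$, the modified smoothness constants $L_{(i),j}$ for $n<j\le n'$ chosen as in (\ref{new-L2}), and Algorithm~\ref{extra} is run on (\ref{problem2}) with $b=1$ and $\alpha=\bO(1/(n\mu))$. Theorem~\ref{non-acc-theorem2} then certifies the same $\bO((\kappa_b+\kappa)\log\tfrac{1}{\epsilon})$ communication and $\bO((\kappa_s+n)\log\tfrac{1}{\epsilon})$ stochastic-gradient bounds, which, combining both regimes and again using $\kappa=2\kappa_c$, give exactly the stated complexities.

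\textbf{Where the real work lives.} There is essentially no new analytical obstacle here, because every hard estimate (the Lyapunov contraction, the bound on the variance of the VR estimator, and the zero-sample equivalence) is already absorbed into Theorems~\ref{non-acc-theorem} and \ref{non-acc-theorem2}. The only mild point to verify carefully is that the EXTRA choice $\U=\V=\sqrt{(\I-\W)/2}$ with $\omega=0$ really meets the hypotheses of both theorems—in particular, that $\mathrm{Span}(\U)$ coincides with the orthogonal complement of $\1$ so that the bound $\|\U\blambda\|^2\ge\tfrac{1}{\kappa}\|\blambda\|^2$ in (\ref{UVbound}) is applicable to the dual iterates $\blambda^k$ produced by step (\ref{non-acc-s3}). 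This is straightforward: since $\blambda^0=\0$ and every update $\blambda^{k+1}=\blambda^k+\U\x^{k+1}$ adds an element of $\mathrm{Span}(\U)$, an easy induction shows $\blambda^k\in\mathrm{Span}(\U)$ for all $k$, so (\ref{UVbound}) applies at every iteration and the conclusions of Theorems~\ref{non-acc-theorem} and \ref{non-acc-theorem2} transfer directly to the VR-EXTRA instance.
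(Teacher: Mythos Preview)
Your proposal is correct and matches the paper's approach: the corollary is presented in the paper as an immediate consequence of Theorems~\ref{non-acc-theorem} and~\ref{non-acc-theorem2} once Lemma~\ref{non-acc-lemma-uni} supplies conditions~(\ref{UV-define})--(\ref{UVbound}) with $\kappa=2\kappa_c$, and you have threaded the two regimes through exactly as intended. Your additional remark that $\blambda^k\in\mathrm{Span}(\U)$ by induction from $\blambda^0=0$ is a helpful clarification that the paper leaves implicit.
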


\begin{corollary}
Suppose that Assumptions \ref{assumption_f} and \ref{assumption_w} hold with $\omega=\frac{\sqrt{2}}{2}$. Use the zero-sample strategy if $\kappa_c^2\geq\max\{\kappa_s,n\}$. Then  the VR-DIGing method in Algorithm \ref{extra} requires the time of $\bO((\kappa_b+\kappa_c^2)\log\frac{1}{\epsilon})$ communication rounds and $\bO((\kappa_s+n)\log\frac{1}{\epsilon})$ stochastic gradient evaluations to find $\x^k$ such that $\E_{\xi^k}[\|\x^k-\x^*\|^2]\leq\epsilon$.
\end{corollary}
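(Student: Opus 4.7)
The plan is to derive this corollary as a direct specialization of the unified Theorems \ref{non-acc-theorem} and \ref{non-acc-theorem2} to the DIGing choice of the matrices $\U$ and $\V$. The main work has already been packaged into those two theorems and into Lemma \ref{non-acc-lemma-uni}, so what remains is essentially a verification of hypotheses and a substitution of $\kappa = \kappa_c^2$ into the complexity expressions.

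First I would verify the structural hypotheses for the DIGing instantiation. Under Assumption \ref{assumption_w} with $\omega = \sqrt{2}/2$, the eigenvalues of $\W$ lie in $[\sqrt{2}/2, 1]$, so $\I - \W^2 \succeq 0$ and $\V = \sqrt{\I-\W^2}$ is well defined and symmetric; together with $\U = \I - \W$, the consensus characterization (\ref{UV-define}) holds since both $\U\x = 0$ and $\V\x = 0$ reduce to $\x \in \mathrm{Ker}(\I-\W) = \mathrm{Span}(\1)$, which is exactly the agreement subspace. Then I would invoke Lemma \ref{non-acc-lemma-uni} to obtain the inequalities (\ref{UVbound}) with $\kappa = 1/(1-\sigma_2(\W))^2 = \kappa_c^2$; this is where the dependence of the communication cost on $\kappa_c^2$, rather than $\kappa_c$, enters the analysis.

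Next I would split into the two regimes corresponding to the two theorems. In the regime $\kappa_c^2 \leq \max\{\kappa_s, n\}$, the mini-batch size $b = \max\{\overline L_f, n\mu\}/\max\{L_f, \kappa_c^2 \mu\}$ is at least $1$, and Theorem \ref{non-acc-theorem} applies directly with $\kappa = \kappa_c^2$, yielding $\bO((\kappa_b + \kappa_c^2)\log\frac{1}{\epsilon})$ communication rounds and $\bO((\kappa_s + n)\log\frac{1}{\epsilon})$ stochastic gradient evaluations. In the regime $\kappa_c^2 > \max\{\kappa_s, n\}$, the zero-sample construction of Section \ref{sec-non-acc-vr2} is invoked: one lifts problem (\ref{problem}) to the equivalent problem (\ref{problem2}) with $n' = \kappa_c^2$ and the enlarged Lipschitz parameters of (\ref{new-L2}). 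Theorem \ref{non-acc-theorem2} then applies and gives the same two complexity bounds, since the zero samples cost no stochastic gradient computations and the augmented condition numbers were calibrated precisely so that $b = 1$ and the effective $\kappa$ on the lifted problem still equals $\kappa_c^2$.

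There is no real obstacle beyond the bookkeeping of confirming that the hypotheses of the chosen theorem are met in each regime; all the analytic work (the Lyapunov/variance argument controlling $\E_{\xi^k}[\|\x^k-\x^*\|^2]$ and the translation from per-iteration counts into the communication and gradient complexities described in Remark \ref{remark3}) is already absorbed into Theorems \ref{non-acc-theorem} and \ref{non-acc-theorem2}. The only DIGing-specific ingredient is the $1/\kappa$ spectral lower bound on $\|\U\blambda\|^2$ with $\kappa = \kappa_c^2$, which arises because $\U = \I-\W$ has smallest non-zero eigenvalue $1-\sigma_2(\W)$ and appears in (\ref{UVbound}) as a squared factor; this is what forces the communication dependence $\kappa_c^2$ instead of $\kappa_c$ and is the sole point at which DIGing is penalised relative to EXTRA in the final statement.
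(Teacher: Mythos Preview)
Your proposal is correct and follows exactly the approach the paper takes: the corollary is obtained by specializing Theorems \ref{non-acc-theorem} and \ref{non-acc-theorem2} to the DIGing choice $\U=\I-\W$, $\V=\sqrt{\I-\W^2}$, using Lemma \ref{non-acc-lemma-uni} to supply (\ref{UVbound}) with $\kappa=\kappa_c^2$, and then reading off the complexities in the two regimes. In fact the paper does not even write out a separate proof for this corollary, merely noting that one substitutes $\kappa=\kappa_c^2$ into the two theorems; your write-up is more explicit than the paper's own treatment.
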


\begin{remark}\label{remark2}
$\vspace*{-0.4cm}\linebreak$
\begin{enumerate}
\item The communication complexity of VR-DIGing has a worse dependence on $\kappa_c$ than that of VR-EXTRA. This is because EXTRA uses $\U=\sqrt{\frac{\I-\W}{2}}$ in problem (\ref{problem_consd}), while DIGing uses $\U=\I-\W$. From Lemma \ref{non-acc-lemma-uni}, we see that different choice of $\U$ gives different order of $\kappa_c$.
\item From Table \ref{table-comp}, we see that EXTRA and VR-EXTRA have the same communication complexity, and DIGing and VR-DIGing also have the same communication complexity. Thus extending EXTRA and DIGing to stochastic decentralized optimization does not need to pay a price of more communication cost theoretically.
\item When $\kappa\leq\max\{\kappa_s,n\}$, running VR-EXTRA and VR-DIGing with $mn$ samples needs the time of $\bO((\kappa_s+n)\log\frac{1}{\epsilon})$ stochastic gradient evaluations by parallelism, which is the same as that of running the single-machine VR methods with $n$ samples when we ignore the communication time. On the other hand, when we run the single-machine VR methods with $mn$ samples, the required time increases to $\bO((\kappa_s+mn)\log\frac{1}{\epsilon})$. Thus the linear speedup is achieved when $n$ is larger than $\kappa_s$. The situation of $\kappa>\max\{\kappa_s,n\}$ is more complicated because at each iteration, some machines would be computing gradients, while others would be idle if the zero-sample is chosen. Parallelism is destroyed and the actual running time would be larger than the time of $\bO((\kappa_s+n)\log\frac{1}{\epsilon})$ stochastic gradient evaluations.
\item Both in theory and in practice, we can choose a larger mini-batch size $b$ than the particular choice given in Algorithm \ref{extra}, at the expense of a higher stochastic gradient computation complexity than $\bO((\kappa_s+n)\log\frac{1}{\epsilon})$. However, the communication complexity remains unchanged. See the proof of Theorem \ref{non-acc-theorem}. Denote $\tau$ to be the ratio between the practical running time of performing one communication round and one stochastic gradient computation. If $\kappa_s+n\leq \tau(\kappa_b+\kappa)$, that is, communications dominate the total running time, we can increase the mini-batch size to $\frac{\max\{\overline L_f,n\mu\}}{\max\{L_f,\kappa\mu\}}\frac{\tau(\kappa_b+\kappa)}{\kappa_s+n}=\tau$, which does not increase the total running time of $\bO(\tau(\kappa_b+\kappa)\log\frac{1}{\epsilon})$.
\end{enumerate}
\end{remark}

\begin{algorithm}[t]
   \caption{Acc-VR-EXTRA and Acc-VR-DIGing}
   \label{accextra}
\begin{algorithmic}
   \STATE Initialize: $x^0_{(i)}=z^0_{(i)}=w^0_{(i)}$, $\blambda^0_{(i)}=0$ for all $i$, $\alpha=\bO(\frac{1}{L_f})$, $b=\max\{\frac{\max\{\sqrt{n\overline L_f/\mu},n\}}{\max\{\sqrt{\kappa L_f/\mu},\kappa\}},\frac{\overline L_f}{L_f}\}$, $\theta_1=\min\{\frac{1}{2}\sqrt{\frac{\kappa\mu}{L_f}},\frac{1}{2}\}$, $\theta_2=\frac{\overline L_f}{2L_fb}$, where $\kappa=2\kappa_c$ for EXTRA, and $\kappa=\kappa_c^2$ for DIGing.
   \STATE Let $\U=\V=\sqrt{\frac{\I-\W}{2}}$ for EXTRA, and $\U=\I-\W$ and $\V=\sqrt{\I-\W^2}$ for DIGing.
   \STATE Let distribution $\D_{(i)}$ be to output $j\in[1,n]$ with probability $p_{(i),j}=\frac{L_{(i),j}}{n\overline L_{(i)}}$.
   \FOR{$k=0,1,2,...$}
   \STATE $\S_{(i)}^k\leftarrow b$ independent samples from $\D_{(i)}$ with replacement, $\forall i$,
   \STATE Perform steps (\ref{acc-s1})-(\ref{acc-s6}), $\forall i$,
   \ENDFOR
\end{algorithmic}
\end{algorithm}

\section{Accelerated Variance Reduced EXTRA and DIGing}
In this section, we develop the accelerated VR-EXTRA and VR-DIGing methods. In algorithm (\ref{non-acc-s1})-(\ref{non-acc-s4}), we combine (\ref{alm}) with the loopless SVRG to get the non-accelerated methods. To develop the accelerated methods, a straightforward idea is to combine (\ref{alm}) with the loopless Katyusha proposed in \citep{Kovalev-2020-loopless}, which leads to the following algorithm (\ref{acc-s1})-(\ref{acc-s6}). We give the parameter settings in Algorithm \ref{accextra}. We will not write (\ref{acc-s1})-(\ref{acc-s6}) in the EXTRA/DIGing style since the resultant methods are complex, and they are not very similar to the original EXTRA and DIGing besides the feature of tracking the differences of gradients.
\begin{subequations}
\begin{align}
&y_{(i)}^k=\theta_1z_{(i)}^k+\theta_2w_{(i)}^k+(1-\theta_1-\theta_2)x_{(i)}^k,\quad\forall i,\label{acc-s1}\\
&\nabla_{(i)}^k=\frac{1}{b}\sum_{j\in\S_{(i)}^k}\frac{1}{np_{(i),j}}\left(\nabla f_{(i),j}(y_{(i)}^k)-\nabla f_{(i),j}(w_{(i)}^k)\right)+\nabla f_{(i)}(w_{(i)}^k),\quad\forall i,\label{acc-s2}\\
&z_{(i)}^{k+1}=\frac{1}{1\hspace*{-0.05cm}+\hspace*{-0.05cm}\frac{\mu\alpha}{\theta_1}}\hspace*{-0.08cm}\left(\hspace*{-0.08cm}\frac{\mu\alpha}{\theta_1}y_{(i)}^k\hspace*{-0.05cm}+\hspace*{-0.05cm}z_{(i)}^k\hspace*{-0.05cm}-\hspace*{-0.05cm}\frac{1}{\theta_1}\hspace*{-0.08cm}\left(\hspace*{-0.08cm}\alpha\nabla_{(i)}^k\hspace*{-0.05cm}+\hspace*{-0.05cm}\sum_{j\in\N_{(i)}}\U_{ij}\blambda_{(j)}^k\hspace*{-0.05cm}+\hspace*{-0.05cm}\theta_1\sum_{j\in\N_{(i)}}(\V^2)_{ij}z_{(j)}^k\hspace*{-0.08cm}\right)\hspace*{-0.08cm}\right),\forall i,\label{acc-s3}\\
&\blambda_{(i)}^{k+1}=\blambda_{(i)}^k+\theta_1\sum_{j\in\N_{(i)}}\U_{ij}z_{(j)}^{k+1},\quad\forall i,\label{acc-s4}\\
&x_{(i)}^{k+1}=y_{(i)}^k+\theta_1\left(z_{(i)}^{k+1}-z_{(i)}^k\right),\quad\forall i,\label{acc-s5}\\
&w_{(i)}^{k+1}=\left\{
  \begin{array}{l}
    x_{(i)}^k\mbox{ with probability }\frac{b}{n},\\
    w_{(i)}^k\mbox{ with probability }1-\frac{b}{n},
  \end{array}
\right. \forall i.\label{acc-s6}
\end{align}
\end{subequations}

In the above algorithm, steps (\ref{acc-s1}) and (\ref{acc-s5}) are the Nesterov's acceleration steps, which are motivated by \citep{kat,Kovalev-2020-loopless}. Steps (\ref{acc-s3}) and (\ref{acc-s4}) involve the operation of $\U\x$, which is uncomputable for $\U=\sqrt{\frac{\I-\W}{2}}$ in EXTRA in the distributed environment. Introducing the auxiliary variable $\widetilde\blambda^k=\U\blambda^k$ and multiplying both sides of (\ref{acc-s4}) by $\U$ leads to
\begin{eqnarray}
\begin{aligned}\label{acc-extra2}
&\z^{k+1}=\frac{1}{1+\frac{\mu\alpha}{\theta_1}}\left(\frac{\mu\alpha}{\theta_1}\y^k+\z^k-\frac{1}{\theta_1}\left(\alpha\nabla^k+\widetilde\blambda^k+\theta_1\V^2\z^k\right)\right),\\
&\widetilde\blambda^{k+1}=\widetilde\blambda^k+\theta_1\U^2\z^{k+1},
\end{aligned}
\end{eqnarray}
in the compact form. From the definitions of $\U=\V=\sqrt{\frac{\I-\W}{2}}$, we only need to compute $\W\z$, which corresponds to the gossip-style communications. For DIGing, we do not need such auxiliary variables.
\subsection{Complexities}

Theorem \ref{acc-theorem} gives the complexities of algorithm (\ref{acc-s1})-(\ref{acc-s6}) in a unified way, and Corollaries \ref{corollary-extra} and \ref{corollary-diging} provide the complexities for the particular Acc-VR-EXTRA and Acc-VR-DIGing methods, respectively.

\begin{theorem}\label{acc-theorem}
Suppose that Assumption \ref{assumption_f} holds, and $\U$ and $\V$ satisfy (\ref{UV-define}) and (\ref{UVbound}). Let $\theta_1=\min\{\frac{1}{2}\sqrt{\frac{\kappa\mu}{L_f}},\frac{1}{2}\}$, $\theta_2=\frac{\overline L_f}{2L_fb}$, $\alpha=\frac{1}{10L_f}$, $\blambda^0=0$, and $b=\max\{\frac{\max\{\sqrt{n\overline L_f/\mu},n\}}{\max\{\sqrt{\kappa L_f/\mu},\kappa\}},\frac{\overline L_f}{L_f}\}$.
\begin{enumerate}
\item If $\kappa\leq\frac{nL_f}{\overline L_f}$, such that $b=\frac{\max\{\sqrt{n\overline L_f/\mu},n\}}{\max\{\sqrt{\kappa L_f/\mu},\kappa\}}$, then  algorithm (\ref{acc-s1})-(\ref{acc-s6}) requires the time of $\bO((\kappa+\sqrt{\kappa_b\kappa})\log\frac{1}{\epsilon})$ communication rounds and $\bO((\sqrt{n\kappa_s}+n)\log\frac{1}{\epsilon})$ stochastic gradient evaluations to find $\z^k$ such that $\E_{\xi^k}\big[\|\z^k-\x^*\|^2\big]\leq\epsilon$.
\item If $\kappa\geq\frac{nL_f}{\overline L_f}$, such that $b=\frac{\overline L_f}{L_f}$, then algorithm (\ref{acc-s1})-(\ref{acc-s6}) requires the time of $\bO((\kappa+\sqrt{\kappa_b\kappa})\log\frac{1}{\epsilon})$ communication rounds and $\bO(\frac{\overline L_f}{L_f}(\kappa+\sqrt{\kappa_b\kappa})\log\frac{1}{\epsilon})$ stochastic gradient evaluations to find $\z^k$ such that $\E_{\xi^k}\big[\|\z^k-\x^*\|^2\big]\leq\epsilon$.
\end{enumerate}
\end{theorem}

\begin{remark}\label{acc-remark}
$\vspace*{-0.4cm}\linebreak$
\begin{enumerate}
\item As introduced in Section \ref{sec-assumption}, we have $\kappa_b\leq\kappa_s\leq n\kappa_b$, and we always assume $\kappa_s\ll n\kappa_b$ in the analysis of stochastic algorithms. Thus we can expect $\frac{nL_f}{\overline L_f}$ to be large for large-scale data. On the other hand, $\kappa_c$ depends on the network scale and connectivity. For example, $\kappa_c=\bO(1)$ for the commonly used Erd\H{o}s$-$R\'{e}nyi random graph, and $\kappa_c=\bO(m\log m)$ for the geometric graph. In the worst case, for example, the linear graph or cycle graph, we have $\kappa_c=\bO(m^2)$ \citep{Nedic-2018}. Thus we can also expect $\kappa$ to be not very large when the number of distributed nodes is limited and the network is well connected. So we can expect that the assumption $\kappa\leq\frac{nL_f}{\overline L_f}$ always holds for large-scale distributed data, for example, thousands of nodes and each node with millions of data.
\item Similar to VR-EXTRA and VR-DIGing, both in theory and in practice, we can choose a larger mini-batch size $b$ than the particular choice given in Algorithm \ref{accextra}, at the expense of higher stochastic gradient computation complexities than the ones given in Theorem \ref{acc-theorem}. However, the $\bO((\sqrt{\kappa_b\kappa}+\kappa)\log\frac{1}{\epsilon})$ communication complexity remains unchanged. See the proof of Theorem \ref{acc-theorem}. We take part one of Theorem \ref{acc-theorem} as an example. Recall the definition of $\tau$ in Remark \ref{remark2}(4). If $\max\{\sqrt{n\kappa_s},n\}\leq \tau\max\{\sqrt{\kappa_b\kappa},\kappa\}$, that is, communications dominate the total running time, we can increase the mini-batch size to $\frac{\max\{\sqrt{n\overline L_f/\mu},n\}}{\max\{\sqrt{\kappa L_f/\mu},\kappa\}}\frac{\tau\max\{\sqrt{\kappa_b\kappa},\kappa\}}{\max\{\sqrt{n\kappa_s},n\}}=\tau$, which does not increase the total running time of $\bO(\tau(\sqrt{\kappa_b\kappa}+\kappa)\log\frac{1}{\epsilon})$.
\end{enumerate}
\end{remark}
\begin{corollary}\label{corollary-extra}
Suppose that Assumptions \ref{assumption_f} and \ref{assumption_w} hold with $\omega=0$. Under the parameter settings in Theorem \ref{acc-theorem} with $\kappa=2\kappa_c$, if $2\kappa_c\leq\frac{nL_f}{\overline L_f}$ and $\kappa_c\leq\kappa_b$, the Acc-VR-EXTRA algorithm requires the time of $\bO((\kappa_c+\sqrt{\kappa_b\kappa_c})\log\frac{1}{\epsilon})=\bO(\sqrt{\kappa_b\kappa_c}\log\frac{1}{\epsilon})$ communication rounds and $\bO((\sqrt{n\kappa_s}+n)\log\frac{1}{\epsilon})$ stochastic gradient evaluations to find $\z^k$ such that $\E_{\xi^k}[\|\z^k-\x^*\|^2]\leq\epsilon$.
\end{corollary}

\begin{corollary}\label{corollary-diging}
Suppose that Assumptions \ref{assumption_f} and \ref{assumption_w} hold with $\omega=\frac{\sqrt{2}}{2}$. Under the parameter settings in Theorem \ref{acc-theorem} with $\kappa=\kappa_c^2$, if $\kappa_c^2\leq\frac{nL_f}{\overline L_f}$ and $\kappa_c^2\leq\kappa_b$, the Acc-VR-DIGing algorithm requires the time of $\bO((\kappa_c^2+\kappa_c\sqrt{\kappa_b})\log\frac{1}{\epsilon})=\bO(\kappa_c\sqrt{\kappa_b}\log\frac{1}{\epsilon})$ communication rounds and $\bO((\sqrt{n\kappa_s}+n)\log\frac{1}{\epsilon})$ stochastic gradient evaluations  to find $\z^k$ such that $\E_{\xi^k}[\|\z^k-\x^*\|^2]\leq\epsilon$.
\end{corollary}
\begin{remark}\label{acc-remark2}
$\vspace*{-0.4cm}\linebreak$
\begin{enumerate}
\item From Table \ref{table-comp}, we see that the communication complexity and stochastic gradient computation complexity of Acc-VR-EXTRA are both optimal under the restrictions of $2\kappa_c\leq\frac{nL_f}{\overline L_f}$ and $\kappa_c\leq\kappa_b$. Similarly, the stochastic gradient computation complexity of Acc-VR-DIGing is also optimal. However, its communication complexity is worse than the corresponding lower bound by the $\bO(\sqrt{\kappa_c})$ factor.
\item Running Acc-VR-EXTRA and Acc-VR-DIGing with $mn$ samples needs the time of $\bO((\sqrt{n\kappa_s}+n)\log\frac{1}{\epsilon})$ stochastic gradient evaluations, which is the same as that of running the single-machine Katyusha with $n$ samples when we ignore the communication time. On the other hand, when we run the single-machine Katyusha with $mn$ samples, the required time increases to $\bO((\sqrt{mn\kappa_s}+mn)\log\frac{1}{\epsilon})$. Since acceleration takes effect only when $\kappa_s\gg mn$, the parallelism speeds up Katyusha by the $\sqrt{m}$ factor. On the other hand, when $\kappa_s\leq n$, the linear speedup is achieved.
\end{enumerate}
\end{remark}

At last, we compare VR-EXTRA and VR-DIGing with Acc-VR-EXTRA and Acc-VR-DIGing. Both the accelerated methods and non-accelerated methods have their own advantages.
\begin{enumerate}
\item The accelerated methods need less stochastic gradient computation evaluations than the non-accelerated methods when $\kappa_s>n$ and $\kappa\leq\frac{nL_f}{\overline L_f}$. In this case, acceleration takes effect. Otherwise, the accelerated methods have no advantage over non-accelerated methods on the computation cost. Moreover, the non-accelerated methods have the superiority of simple implementation.
\item The accelerated methods need less communication rounds than their non-accelerated counterparts when $\kappa\leq\kappa_b$. When dealing with large-scale distributed data in machine learning, we may expect that computation often dominates the total running time. Otherwise, the full batch accelerated decentralized methods such as APAPC \citep{richtaric-2020-dist} may be a better choice.
\end{enumerate}

\subsection{Chebyshev Acceleration}

In this section we remove the restrictions on the size of $\kappa_c$ in Corollaries \ref{corollary-extra} and \ref{corollary-diging}, which come from matrix $\U$, as shown in (\ref{UVbound}). To make $\kappa$ small, our goal is to construct a new matrix $\hat\U$ by $\U$ such that $\mbox{Ker}(\hat\U)=\mbox{Span}(\1)$ and $\|\hat\U\blambda\|^2\geq\frac{1}{c}\|\blambda\|^2$ for all $\blambda\in\mbox{Span}(\hat\U)$, where $c$ is a much smaller constant than $\kappa$. Moreover, the construction procedure should not take more than $\bO(\sqrt{\kappa_c})$ time. Then  we only need to replace $\U$ and $\V$ by $\hat\U$ and some matrix $\hat\V$ in algorithm (\ref{acc-s1})-(\ref{acc-s6}), where $\hat\U$ and $\hat\V$ should satisfy the relations in (\ref{UVbound}). We follow \citep{dasent} to use Chebyshev acceleration to construct $\hat\U$, which is a common acceleration scheme to minimize $c$.

\subsubsection{Review of Chebyshev Acceleration}
We first give a brief description of Chebyshev acceleration, which was first used to accelerate distributed optimization in \citep{dasent}. We first introduce the Chebyshev polynomials defined as $T_0(x)=1$, $T_1(x)=x$, and $T_{k+1}(x)=2xT_k(x)-T_{k-1}(x)$ for all $k\geq 1$. Given a positive semidefinite symmetric matrix $L\in\R^{m\times m}$ such that $\mbox{Ker}(L)=\mbox{Span}(\1)$, denote $\lambda_1(L)\geq\lambda_2(L)\geq\cdots\geq \lambda_{m-1}(L)>\lambda_m(L)=0$ as the eigenvalues of $L$. Following the notations in \citep{dasent}, we define $\gamma(L)=\frac{\lambda_{m-1}(L)}{\lambda_1(L)}$, $c_1=\frac{1-\sqrt{\gamma(L)}}{1+\sqrt{\gamma(L)}}$, $c_2=\frac{1+\gamma(L)}{1-\gamma(L)}$, and $c_3=\frac{2}{\lambda_1(L)+\lambda_{m-1}(L)}$. Then  $c_3L$ has the spectrum in $[1-c_2^{-1},1+c_2^{-1}]$. For any polynomial $p_t(x)$ of degree at most $t$, Theorem 6.1 in \citep{Auzinger} tells us that the solution of the following problem
\begin{equation}\notag
\min_{p_t:p_t(0)=0}\max_{x\in[1-c_2^{-1},1+c_2^{-1}]} |p_t(x)-1|
\end{equation}
is
\begin{equation}
P_t(x)=1-\frac{T_t(c_2(1-x))}{T_t(c_2)}.\notag
\end{equation}
Moreover, from Corollary 6.1 in \citep{Auzinger}, we have
\begin{equation}\notag
\min_{p_t:p_t(0)=0}\max_{x\in[1-c_2^{-1},1+c_2^{-1}]} |p_t(x)-1|=\frac{2c_1^t}{1+c_1^{2t}},
\end{equation}
which means that the spectrum of $P_t(c_3L)$ lies in $\left[1-\frac{2c_1^t}{1+c_1^{2t}},1+\frac{2c_1^t}{1+c_1^{2t}}\right]$. For the particular choice of $t=\frac{3}{\sqrt{\gamma(L)}}$, it can be checked that $c_1^t\leq \Big(\left(1-\sqrt{\gamma(L)}\right)^{1/\sqrt{\gamma(L)}}\Big)^3\leq e^{-3}$, which further leads to $\frac{2c_1^t}{1+c_1^{2t}}\leq \frac{2}{e^3+e^{-3}}\leq 0.1$. Thus the spectrum of $P_t(c_3L)$ is a subinterval of $[0.9,1.1]$. On the other hand, it can be checked that $P_t(c_3L)$ is a gossip matrix satisfying $\mbox{Ker}(P_t(c_3L))=\mbox{Span}(\1)$ \citep{dasent}. In practice, we can compute the operation $P_t(c_3L)\x$ by the following procedure \citep{dasent}:

\begin{algorithmic}
   \STATE Input: $\x$,
   \STATE Initialize: $a^0=1$, $a^1=c_2$, $\z^0=\x$, $\z^1=c_2(\I-c_3L)\x$,
   \FOR{$s=1,2,...,t-1$}
   \STATE $a^{s+1}=2c_2a^s-a^{s-1}$,
   \STATE $\z^{s+1}=2c_2(\I-c_3L)\z^s-\z^{s-1}$.
   \ENDFOR
   \STATE Output: $P_t(c_3L)\x=\x-\frac{\z^t}{a^t}$.
\end{algorithmic}

\subsubsection{Remove the Restrictions on the Size of $\kappa_c$}
For the particular choice of $\U=\V=\sqrt{\frac{\I-\W}{2}}$, define $\hat\U=\hat\V=\sqrt{\frac{1}{2.2}P_t(c_3\U^2)}$, where $c_3=\frac{2}{\lambda_1(\U^2)+\lambda_{n-1}(\U^2)}$. Then  we have $\|\hat\V\x\|^2\leq\frac{1}{2}\|\x\|^2$ and $\|\hat\U\blambda\|^2\geq\frac{0.9}{2.2}\|\blambda\|^2\geq\frac{1}{3}\|\blambda\|^2$ for all $\blambda\in\mbox{Span}(\hat\U)$ with $t=\frac{3}{\sqrt{\gamma(\U^2)}}=\frac{3}{\sqrt{1-\sigma_2(\W)}}=\bO(\sqrt{\kappa_c})$. So $\hat\U$ and $\hat\V$ satisfy the relations in (\ref{UVbound}), and replacing $\U$ and $\V$ by $\hat\U$ and $\V$ does not destroy the proof of Theorem \ref{acc-theorem}. In the algorithm implementation, we only need to replace the operations $\U^2\z$ and $\V^2\z$ in (\ref{acc-extra2}) by $\frac{1}{2.2}P_t(c_3\U^2)\z$. Moreover, replacing $\kappa$ by the constant $3$ in Theorem \ref{acc-theorem}, we can expect that the assumptions on $\kappa$ always hold. Since we need $\bO(\sqrt{\kappa_c})$ time to construct $\hat\U$ at each iteration, so the communication complexity remains $\left(\sqrt{\kappa_b\kappa_c}\log\frac{1}{\epsilon}\right)$.

\begin{corollary}\label{acc-corollary1}
Suppose that Assumptions \ref{assumption_f} and \ref{assumption_w} hold with $\omega=0$. Under the parameter settings in Theorem \ref{acc-theorem} with $\kappa=3$, Acc-VR-EXTRA with Chebyshev acceleration (CA) requires the time of $\left(\sqrt{\kappa_b\kappa_c}\log\frac{1}{\epsilon}\right)$ communication rounds and $\bO((\sqrt{n\kappa_s}+n)\log\frac{1}{\epsilon})$ stochastic gradient evaluations to find $\z^k$ such that $\E_{\xi^k}[\|\z^k-\x^*\|^2]\leq\epsilon$.
\end{corollary}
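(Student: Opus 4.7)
The plan is to invoke Theorem \ref{acc-theorem} as a black box applied to the algorithm obtained by substituting $\hat\U$ and $\hat\V$ for $\U$ and $\V$, and then separately account for the inner-loop communication blow-up introduced by each Chebyshev polynomial evaluation. The argument has two essentially decoupled parts: a verification that the preconditions of Theorem \ref{acc-theorem} hold with $\kappa=3$, and a communication-round bookkeeping.

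First, I would verify that $\hat\U=\hat\V=\sqrt{\frac{1}{2.2}P_t(c_3\U^2)}$ with $t=\lceil 3/\sqrt{\gamma(\U^2)}\rceil$ satisfies conditions (\ref{UV-define}) and (\ref{UVbound}) with the constant $3$ in place of $\kappa$. The text immediately preceding the corollary already does the computation: the spectrum of $P_t(c_3\U^2)$ restricted to $\mathrm{Span}(\U)$ lies in $[0.9,1.1]$, and $\mathrm{Ker}(P_t(c_3\U^2))=\mathrm{Span}(\1)$, so $\hat\U\x=0\Leftrightarrow x_{(1)}=\cdots=x_{(m)}$, $\|\hat\V\x\|^2\leq\frac{1}{2}\|\x\|^2$, and $\|\hat\U\blambda\|^2\geq\frac{1}{3}\|\blambda\|^2$ on $\mathrm{Span}(\hat\U)$. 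Thus (\ref{UV-define}) and (\ref{UVbound}) hold verbatim with $\kappa=3$. Since $3\leq\kappa_b$ and $3\leq\max\{n\kappa_b/\kappa_s,n^2\kappa_b/\kappa_s^2\}$ for any nondegenerate instance (using $\kappa_b\geq 1$ and $\kappa_s\leq n\kappa_b$), the quantitative hypotheses of Theorem \ref{acc-theorem} are also in force.

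Next, I would apply Theorem \ref{acc-theorem} directly to the modified recursion. With $\kappa=3$, the theorem yields an outer iteration count of $\bO(\sqrt{\kappa_b\cdot 3}\log\frac{1}{\epsilon})=\bO(\sqrt{\kappa_b}\log\frac{1}{\epsilon})$ and a stochastic gradient computation count of $\bO((\sqrt{n\kappa_s}+n)\log\frac{1}{\epsilon})$. The stochastic gradient bound transfers unchanged, because Chebyshev acceleration only affects the matrix multiplications with $\W$ (communication side) and not the local sampling or finite-sum gradient updates. Concretely, the mini-batch size $b=\sqrt{n\max\{\overline L_f,n\mu\}/(3L_f)}$ times $\bO(\sqrt{\kappa_b}\log\frac{1}{\epsilon})$ outer iterations gives exactly $\bO((\sqrt{n\kappa_s}+n)\log\frac{1}{\epsilon})$ evaluations after simplification.

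Finally, for the communication complexity I would count the rounds per outer iteration. Each outer iteration now requires evaluating $\frac{1}{2.2}P_t(c_3\U^2)\z$ (in place of $\U^2\z$ and $\V^2\z$), which by the stated three-term recursion costs $t=\bO(\sqrt{\kappa_c})$ gossip-style multiplications by $\W$, i.e.\ $\bO(\sqrt{\kappa_c})$ communication rounds. Multiplying by the $\bO(\sqrt{\kappa_b}\log\frac{1}{\epsilon})$ outer iterations gives the advertised $\bO(\sqrt{\kappa_b\kappa_c}\log\frac{1}{\epsilon})$. The main subtlety—hence the one step I would double-check carefully—is that the proof of Theorem \ref{acc-theorem} truly uses only the algebraic conditions (\ref{UV-define}) and (\ref{UVbound}) and nowhere exploits the particular identity $\U=\V=\sqrt{(\I-\W)/2}$ or a specific sparsity pattern of $\U$; once this structural robustness is confirmed, the corollary follows by merely re-reading Theorem \ref{acc-theorem} with the substitutions above.
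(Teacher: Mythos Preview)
Your proposal is correct and follows essentially the same approach as the paper: the text immediately preceding the corollary verifies that $\hat\U,\hat\V$ satisfy (\ref{UV-define}) and (\ref{UVbound}) with $\kappa=3$, observes that this substitution does not destroy the proof of Theorem~\ref{acc-theorem}, and then multiplies the resulting $\bO(\sqrt{\kappa_b}\log\frac{1}{\epsilon})$ outer-iteration count by the $t=\bO(\sqrt{\kappa_c})$ inner communication rounds per Chebyshev evaluation. Your explicit check of the quantitative hypotheses $\kappa\leq\kappa_b$ and $\kappa\leq\max\{n\kappa_b/\kappa_s,n^2\kappa_b/\kappa_s^2\}$ is slightly more careful than the paper, which simply says ``we can expect that the assumptions on $\kappa$ always hold.''
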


For the particular choice of $\U=\I-\W$ and $\V=\sqrt{\I-\W^2}$, define $\hat\U=\frac{2-\sqrt{2}}{2.2}P_t(c_3\U)$, $\hat\W=\I-\hat\U$, and $\hat\V=\sqrt{\I-\hat\W^2}$, where $c_3=\frac{2}{\lambda_1(\U)+\lambda_{n-1}(\U)}$. Then  we have $\|\hat\V\x\|^2\leq\frac{1}{2}\|\x\|^2$ and $\|\hat\U\blambda\|^2\geq\frac{1}{20}\|\blambda\|^2$ for all $\blambda\in\mbox{Span}(\hat\U)$ with $t=\frac{3}{\sqrt{\gamma(U)}}=\bO(\sqrt{\kappa_c})$. Similar to the above analysis for the Acc-VR-EXTRA-CA method, we have the following complexity corollary. Note that since we replace $\kappa$ by the constant $20$ in Theorem \ref{acc-theorem}, and use the fact that the construction of $\hat\U$ needs $\bO(\sqrt{\kappa_c})$ time at each iteration, we can reduce the communication cost from $\bO(\kappa_c\sqrt{\kappa_b}\log\frac{1}{\epsilon})$ to $\bO(\sqrt{\kappa_b\kappa_c}\log\frac{1}{\epsilon})$.

\begin{corollary}\label{acc-corollary2}
Suppose that Assumptions \ref{assumption_f} and \ref{assumption_w} hold with $\omega=\frac{\sqrt{2}}{2}$. Under the parameter settings of Theorem \ref{acc-theorem} with $\kappa=20$, Acc-VR-DIGing-CA requires the time of $\bO(\sqrt{\kappa_b\kappa_c}\log\frac{1}{\epsilon})$ communication rounds and $\bO((\sqrt{n\kappa_s}+n)\log\frac{1}{\epsilon})$ stochastic gradient evaluations to find $\z^k$ such that $\E_{\xi^k}[\|\z^k-\x^*\|^2]\leq\epsilon$.
\end{corollary}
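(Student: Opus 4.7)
The plan is to mirror the Chebyshev-acceleration argument that gave Corollary \ref{acc-corollary1}, but tune the scaling constant so that the new matrix $\hat\W = \I-\hat\U$ satisfies Assumption \ref{assumption_w} with $\omega=\tfrac{\sqrt{2}}{2}$, which is what Lemma \ref{non-acc-lemma-uni} requires in the DIGing branch. Apart from this, the proof amounts to (i) checking that the triple $(\hat\U,\hat\V,\hat\W)$ satisfies (\ref{UV-define}) and (\ref{UVbound}) with an absolute constant $\kappa=20$, and (ii) bookkeeping the per-iteration Chebyshev overhead to convert Theorem \ref{acc-theorem} into the advertised total complexities.

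For step (i), note that $\U=\I-\W$ is positive semidefinite with $\mbox{Ker}(\U)=\mbox{Span}(\1)$ (because $\W\1=\1$ and $\W\preceq\I$) and $\gamma(\U)=\bO(1/\kappa_c)$, so choosing $t=\lceil 3/\sqrt{\gamma(\U)}\rceil=\bO(\sqrt{\kappa_c})$ the Chebyshev review gives that $P_t(c_3\U)$ vanishes on $\mbox{Span}(\1)$ (since $P_t(0)=0$) and has all remaining eigenvalues in $[0.9,1.1]$. Multiplying by $\tfrac{2-\sqrt{2}}{2.2}$ therefore places the non-zero spectrum of $\hat\U$ inside $\bigl[\tfrac{0.9(2-\sqrt{2})}{2.2},\,\tfrac{2-\sqrt{2}}{2}\bigr]$, so $\hat\W=\I-\hat\U$ has spectrum $\{1\}\cup\bigl[\tfrac{\sqrt{2}}{2},\,1-\tfrac{0.9(2-\sqrt{2})}{2.2}\bigr]$. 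In particular $\hat\W\1=\1$, $\hat\W=\hat\W^T$ and $\hat\W\succeq\tfrac{\sqrt{2}}{2}\I$, verifying Assumption \ref{assumption_w} for $\hat\W$ with $\omega=\tfrac{\sqrt{2}}{2}$. Lemma \ref{non-acc-lemma-uni} applied to $(\hat\U,\hat\V)$ then yields $\|\hat\V\x\|^2\leq\tfrac12\|\x\|^2$, while the direct numerical check $\bigl(\tfrac{0.9(2-\sqrt{2})}{2.2}\bigr)^2\geq\tfrac{1}{20}$ gives $\|\hat\U\blambda\|^2\geq\tfrac{1}{20}\|\blambda\|^2$ on $\mbox{Span}(\hat\U)$. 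Since $\mbox{Ker}(\hat\U)=\mbox{Ker}(\hat\V)=\mbox{Span}(\1)$, condition (\ref{UV-define}) holds as well.

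For step (ii), Theorem \ref{acc-theorem} with $\kappa=20$ applies once we observe that $\kappa\leq\kappa_b$ and $\kappa\leq\max\{n\kappa_b/\kappa_s,n^2\kappa_b/\kappa_s^2\}$ are trivially met for any non-degenerate problem, since $\kappa$ is now an absolute constant. The theorem then yields $\bO(\sqrt{\kappa_b})$ outer iterations and, since Chebyshev acceleration does not modify the stochastic-gradient side of the algorithm, the claimed $\bO((\sqrt{n\kappa_s}+n)\log\frac{1}{\epsilon})$ total gradient evaluations. Each outer iteration replaces the action of $\U$ by one evaluation of $\hat\U$, performed through the degree-$t$ Chebyshev recurrence of the review at a cost of $\bO(\sqrt{\kappa_c})$ communication rounds. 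Multiplying the two factors gives $\bO(\sqrt{\kappa_b\kappa_c}\log\frac{1}{\epsilon})$ total communications, which is the claimed bound.

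The delicate part is the numeric margin in step (i): the constant $\tfrac{2-\sqrt{2}}{2.2}$ is chosen so that the top eigenvalue of $\hat\U$ equals exactly $1-\tfrac{\sqrt{2}}{2}$, which is the largest value compatible with $\hat\W\succeq\tfrac{\sqrt{2}}{2}\I$; the $0.9/1.1$ Chebyshev window then leaves a thin but sufficient gap at the low end to still secure $\|\hat\U\blambda\|^2\geq\tfrac{1}{20}\|\blambda\|^2$. Everything past this numerical check is the EXTRA calculation rerun with DIGing's choice of $(\U,\V)$, together with the observation that the Chebyshev polynomial is a polynomial in the graph Laplacian and hence implementable in $\bO(\sqrt{\kappa_c})$ gossip rounds without ever materializing $\hat\U$.
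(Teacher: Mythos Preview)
Your proposal is correct and follows essentially the same route as the paper: construct $\hat\U=\tfrac{2-\sqrt{2}}{2.2}P_t(c_3\U)$ with $t=\bO(\sqrt{\kappa_c})$, verify that the resulting $\hat\W=\I-\hat\U$ satisfies the spectral part of Assumption~\ref{assumption_w} with $\omega=\tfrac{\sqrt{2}}{2}$ so that $(\hat\U,\hat\V)$ obeys (\ref{UVbound}) with the absolute constant $\kappa=20$, and then invoke Theorem~\ref{acc-theorem} together with the $\bO(\sqrt{\kappa_c})$ per-iteration Chebyshev cost. Two minor remarks: you wrote $\gamma(\U)=\bO(1/\kappa_c)$ where the relevant direction is $\gamma(\U)=\Omega(1/\kappa_c)$ (this is what gives $t=\bO(\sqrt{\kappa_c})$), and each outer iteration actually requires a constant number of $\hat\U$-actions (since $\hat\V^2=2\hat\U-\hat\U^2$ and step (\ref{acc-s4}) also uses $\hat\U$), but this does not change the order of the communication count.
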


\begin{remark}
$\vspace*{-0.4cm}\linebreak$
\begin{enumerate}
\item From Corollaries \ref{acc-corollary1} and \ref{acc-corollary2}, we see that the restrictions on $\kappa_c$ in Corollaries \ref{corollary-extra} and \ref{corollary-diging} have been removed, and the communication complexity of Acc-VR-DIGing has been improved to be optimal by Chebyshev acceleration.
\item We can also combine Chebyshev acceleration with the non-accelerated VR-EXTRA and VR-DIGing, and give the $\bO\left(\left(\kappa_s+n\right)\log\frac{1}{\epsilon}\right)$ stochastic gradient computation complexity and the $\bO\left(\left(\kappa_b\sqrt{\kappa_c}\right)\log\frac{1}{\epsilon}\right)$ communication complexity, which are the same as those of DVR \citep{DVR}.
\end{enumerate}
\end{remark}

\section{Proof of Theorems}\label{sec-proof}

We prove Theorems \ref{non-acc-theorem} and \ref{acc-theorem} in this section. We first introduce some useful properties. For $L$-smooth and convex function $f(\x)$, we have \citep{Nesterov-2004}
\begin{eqnarray}
\begin{aligned}\label{smooth-cond}
f(\y)-f(\x)-\<\nabla f(\x),\y-\x\>\geq \frac{1}{2L}\|\nabla f(\y)-\nabla f(\x)\|^2.
\end{aligned}
\end{eqnarray}
Recall that $x^*$ is the optimal solution of problem (\ref{problem}), then $\x^*$ is also the optimal solution of the following linearly constrained convex problem
\begin{eqnarray}\label{problem3}
\min_{\x} f(\x),\quad \mbox{s.t.}\quad \U\x=0,
\end{eqnarray}
where $\U$ satisfies (\ref{UV-define}). Furthermore, there exists $\blambda^*\in\mbox{Span}(\U)$ such that
\begin{eqnarray}
\begin{aligned}\label{opt-cond}
\nabla f(\x^*)+\frac{1}{\alpha}\U\blambda^*=0.
\end{aligned}
\end{eqnarray}
The existence of $\blambda^*$ is proved in \citep[Lemma 3.1]{shi2015extra}. $\U\x^*=0$ and (\ref{opt-cond}) are the Karush-Kuhn-Tucker (KKT) optimality conditions of problem (\ref{problem3}).

\subsection{Non-accelerated VR-EXTRA and VR-DIGing}
We first give a classical property of the VR technique \citep{SVRG,xiao-2014-svrg-siam}.
\begin{lemma}\label{VR-lemma}
Suppose that Assumption \ref{assumption_f} and condition (\ref{UV-define}) hold. Then for algorithm (\ref{non-acc-s1})-(\ref{non-acc-s4}), we have
\begin{eqnarray}
\begin{aligned}\label{non-acc-vr}
\E_{\S^k}\big[\|\nabla^k-\nabla f(\x^k)\|^2\big]\leq&\frac{4\overline L_f}{b}\left(f(\x^k)-f(\x^*)+\frac{1}{\alpha}\<\blambda^*,\U\x^k\>\right)\\
&+\frac{4\overline L_f}{b}\left(f(\w^k)-f(\x^*)+\frac{1}{\alpha}\<\blambda^*,\U\w^k\>\right).
\end{aligned}
\end{eqnarray}
\end{lemma}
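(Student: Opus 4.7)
My plan is to follow the standard variance-reduction bounding scheme (as in SVRG/Katyusha), adapted to the decentralized setting with the probabilistic sampling $p_{(i),j}=L_{(i),j}/(n\overline L_{(i)})$, and then to use the KKT condition (\ref{opt-cond}) to convert the $\nabla f(\x^*)$ inner product into a $\blambda^*$ inner product.

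First I would observe that the Frobenius norm splits across nodes: $\|\nabla^k-\nabla f(\x^k)\|^2=\sum_{i=1}^m\|\nabla_{(i)}^k-\nabla f_{(i)}(x_{(i)}^k)\|^2$. A direct computation using $p_{(i),j}=L_{(i),j}/(n\overline L_{(i)})$ shows $\E[\nabla_{(i)}^k\mid\x^k,\w^k]=\nabla f_{(i)}(x_{(i)}^k)$, so each $\nabla_{(i)}^k$ is an unbiased estimator. Since $\S_{(i)}^k$ consists of $b$ i.i.d.\ samples, the variance scales as $1/b$ per node, and by the inequality $\E\|X-\E X\|^2\leq\E\|X\|^2$ applied with $X=\frac{1}{np_{(i),j}}(\nabla f_{(i),j}(x_{(i)}^k)-\nabla f_{(i),j}(w_{(i)}^k))$, I can bound
\begin{equation*}
\E\big[\|\nabla_{(i)}^k-\nabla f_{(i)}(x_{(i)}^k)\|^2\big]\leq\frac{1}{b}\sum_{j=1}^n\frac{\overline L_{(i)}}{nL_{(i),j}}\|\nabla f_{(i),j}(x_{(i)}^k)-\nabla f_{(i),j}(w_{(i)}^k)\|^2.
\end{equation*}
The choice of $p_{(i),j}$ is exactly what makes the $L_{(i),j}$ factors cancel after smoothness is applied.

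Next I would introduce $x^*$ as a pivot via $\|a-b\|^2\leq 2\|a-c\|^2+2\|b-c\|^2$ with $c=\nabla f_{(i),j}(x^*)$. Since each $f_{(i),j}$ is $L_{(i),j}$-smooth and convex, (\ref{smooth-cond}) gives $\|\nabla f_{(i),j}(u)-\nabla f_{(i),j}(x^*)\|^2\leq 2L_{(i),j}\,D_{(i),j}(u,x^*)$, where $D_{(i),j}(u,x^*):=f_{(i),j}(u)-f_{(i),j}(x^*)-\langle\nabla f_{(i),j}(x^*),u-x^*\rangle$ is the Bregman divergence. Plugging this in, the $L_{(i),j}$ cancels against the $1/L_{(i),j}$ weight, and the average over $j$ collapses to $D_{(i)}(u,x^*):=f_{(i)}(u)-f_{(i)}(x^*)-\langle\nabla f_{(i)}(x^*),u-x^*\rangle$. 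Summing over $i$ and using $\overline L_{(i)}\leq\overline L_f$ yields
\begin{equation*}
\E\big[\|\nabla^k-\nabla f(\x^k)\|^2\big]\leq\tfrac{4\overline L_f}{b}\big[f(\x^k)-f(\x^*)-\langle\nabla f(\x^*),\x^k-\x^*\rangle\big]+\tfrac{4\overline L_f}{b}\big[f(\w^k)-f(\x^*)-\langle\nabla f(\x^*),\w^k-\x^*\rangle\big].
\end{equation*}

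Finally, I would use the KKT condition (\ref{opt-cond}), $\nabla f(\x^*)=-\tfrac{1}{\alpha}\U\blambda^*$, together with $\U\x^*=0$ and symmetry of $\U$, to rewrite $\langle\nabla f(\x^*),\x^k-\x^*\rangle=-\tfrac{1}{\alpha}\langle\blambda^*,\U\x^k\rangle$ and likewise for $\w^k$. This yields exactly (\ref{non-acc-vr}). The only slightly delicate step is the pivoting through $\nabla f_{(i),j}(x^*)$: one has to commit to that common pivot before applying smoothness so that the Bregman divergences aggregate cleanly into $D_{(i)}(\cdot,x^*)$; naive choices (e.g.\ pivoting through $w_{(i)}^k$) would leave mixed terms that do not match the desired right-hand side.
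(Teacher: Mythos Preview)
Your proposal is correct and follows essentially the same approach as the paper's proof: the paper also reduces the mini-batch variance node-by-node (citing Lemma~D.2 of \citep{kat} for the step you spell out via $\E\|X-\E X\|^2\leq\E\|X\|^2$), pivots through $\nabla f_{(i),j}(x^*)$ with the same $\|a-b\|^2\leq2\|a-c\|^2+2\|b-c\|^2$ inequality, applies (\ref{smooth-cond}) to collapse to $D_{(i)}(\cdot,x^*)$, aggregates using $\overline L_{(i)}\leq\overline L_f$, and finishes with the KKT condition (\ref{opt-cond}) and $\U\x^*=0$. There is no substantive difference.
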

\begin{proof}
From the proof of Lemma D.2 in \citep{kat}, we have
\begin{eqnarray}\notag
\E_{\S_{(i)}^k}\big[\|\nabla_{(i)}^k-\nabla f_{(i)}(x_{(i)}^k)\|^2\big]\leq\frac{1}{b}\E_{j\sim \D_{(i)}}\left[\left\|\frac{1}{np_{(i),j}}\left(\nabla f_{(i),j}(x_{(i)}^k)-\nabla f_{(i),j}(w_{(i)}^k)\right)\right\|^2\right],
\end{eqnarray}
where $\D_{(i)}$ is defined in Algorithm \ref{extra}. Using identity $\|a+b\|^2\leq2\|b\|^2+2\|b\|^2$ and (\ref{smooth-cond}), from the proof of Lemma D.2 in \citep{kat} and recalling that $p_{(i),j}=\frac{L_{(i),j}}{n\overline L_{(i)}}$, we have
\begin{eqnarray}
\begin{aligned}\notag
&\E_{\S_{(i)}^k}\big[\|\nabla_{(i)}^k-\nabla f_{(i)}(x_{(i)}^k)\|^2\big]\\
~~&\leq\frac{2}{b}\E_{j\sim \D_{(i)}}\left[\left\|\frac{1}{np_{(i),j}}\left(\nabla f_{(i),j}(x_{(i)}^k)-\nabla f_{(i),j}(x^*)\right)\right\|^2\right]\\
~~&\quad+\frac{2}{b}\E_{j\sim \D_{(i)}}\left[\left\|\frac{1}{np_{(i),j}}\left(\nabla f_{(i),j}(w_{(i)}^k)-\nabla f_{(i),j}(x^*)\right)\right\|^2\right]\\
~~&\leq\frac{4\overline L_{(i)}}{b}\left(f_{(i)}(x_{(i)}^k)-f_{(i)}(x^*)-\<\nabla f_{(i)}(x^*),x_{(i)}^k-x^*\>\right)\\
~~&\quad+\frac{4\overline L_{(i)}}{b}\left(f_{(i)}(w_{(i)}^k)-f_{(i)}(x^*)-\<\nabla f_{(i)}(x^*),w_{(i)}^k-x^*\>\right).
\end{aligned}
\end{eqnarray}
From the convexity of $f_{(i)}(x)$, the definitions in (\ref{aggregate}), (\ref{def_f}), and (\ref{aggrgate-nabla}), and the fact that $\S^k_{(i)}$ and $\S^k_{(j)}$ are selected independently for all $i$ and $j$, we have
\begin{eqnarray}
\begin{aligned}\notag
\E_{\S^k}\big[\|\nabla^k-\nabla f(\x^k)\|^2\big]\leq&\frac{4\overline L_f}{b}\left(f(\x^k)-f(\x^*)-\<\nabla f(\x^*),\x^k-\x^*\>\right)\\
&+\frac{4\overline L_f}{b}\left(f(\w^k)-f(\x^*)-\<\nabla f(\x^*),\w^k-\x^*\>\right).
\end{aligned}
\end{eqnarray}
From the optimality condition in (\ref{opt-cond}) and $\U\x^*=0$, we have the conclusion.
\end{proof}

The following property is also useful in the analysis of mini-batch VR methods.
\begin{lemma}
For algorithm (\ref{non-acc-s1})-(\ref{non-acc-s4}), we have
\begin{eqnarray}
\begin{aligned}\label{non-acc-nabla-y}
\E_{\S^k}\big[\nabla^k\big]=\nabla f(\x^k).
\end{aligned}
\end{eqnarray}
\end{lemma}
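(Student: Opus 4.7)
The plan is to prove unbiasedness component-wise (row by row in the aggregate notation), and then conclude by stacking. Concretely, I will show that $\E_{\S_{(i)}^k}[\nabla_{(i)}^k] = \nabla f_{(i)}(x_{(i)}^k)$ for each node $i$, using only linearity of expectation, the i.i.d.\ nature of the sampling with replacement, and the particular choice of sampling probabilities $p_{(i),j}$ prescribed in Algorithm~\ref{extra}.

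First I would fix $i$ and condition on $x_{(i)}^k$ and $w_{(i)}^k$. Since $\S_{(i)}^k$ consists of $b$ i.i.d.\ draws from $\D_{(i)}$, linearity gives
\[
\E_{\S_{(i)}^k}[\nabla_{(i)}^k] = \E_{j\sim\D_{(i)}}\Bigl[\tfrac{1}{np_{(i),j}}\bigl(\nabla f_{(i),j}(x_{(i)}^k)-\nabla f_{(i),j}(w_{(i)}^k)\bigr)\Bigr] + \nabla f_{(i)}(w_{(i)}^k).
\]
The $b$-factor cancels, and the importance weight $\tfrac{1}{np_{(i),j}}$ is exactly what is needed: summing over $j$ with probability $p_{(i),j}$ produces $\tfrac{1}{n}\sum_{j=1}^n\bigl(\nabla f_{(i),j}(x_{(i)}^k)-\nabla f_{(i),j}(w_{(i)}^k)\bigr) = \nabla f_{(i)}(x_{(i)}^k)-\nabla f_{(i)}(w_{(i)}^k)$ by definition~(\ref{problem}) of $f_{(i)}$. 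The snapshot term then cancels, yielding $\nabla f_{(i)}(x_{(i)}^k)$.

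Finally, because the sets $\S_{(i)}^k$ are drawn independently across $i$, we have $\E_{\S^k}[\nabla_{(i)}^k] = \E_{\S_{(i)}^k}[\nabla_{(i)}^k]$. Stacking the rows as in~(\ref{aggrgate-nabla}) and~(\ref{aggregate}) gives $\E_{\S^k}[\nabla^k] = \nabla f(\x^k)$, which is the claim.

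There is no real obstacle here — the only thing to be careful about is that the importance weights $\tfrac{1}{np_{(i),j}}$ in~(\ref{non-acc-s1}) match the sampling distribution $p_{(i),j}=\tfrac{L_{(i),j}}{n\overline L_{(i)}}$ from Algorithm~\ref{extra} in exactly the way needed for the weighted draws to collapse to the uniform average $\tfrac{1}{n}\sum_{j=1}^n$; the specific values of $p_{(i),j}$ are irrelevant for unbiasedness (they only matter for the variance bound in Lemma~\ref{VR-lemma}).
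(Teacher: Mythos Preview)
Your proposal is correct and follows essentially the same approach as the paper: compute $\E_{\S_{(i)}^k}[\nabla_{(i)}^k]$ per node by collapsing the mini-batch average to a single-draw expectation and using the importance weights to recover the uniform average, then stack via the aggregate definitions. Your write-up is in fact slightly more explicit than the paper's (you spell out the $b$-cancellation and the independence across nodes), and your closing remark that the specific $p_{(i),j}$ values are irrelevant for unbiasedness is a nice observation the paper omits.
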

\begin{proof}
From the definition of $\nabla_{(i)}^k$ in (\ref{non-acc-s1}), and the fact that the elements in $\S_{(i)}^k$ are selected independently with replacement, we have
\begin{eqnarray}
\begin{aligned}\notag
\E_{\S_{(i)}^k}\big[\nabla_{(i)}^k\big]=&\E_{j\sim \D_{(i)}}\left[ \frac{1}{np_{(i),j}}\left(\nabla f_{(i),j}(x_{(i)}^k)-\nabla f_{(i),j}(w_{(i)}^k)\right)+\nabla f_{(i)}(w_{(i)}^k) \right]=\nabla f_{(i)}(x_{(i)}^k).
\end{aligned}
\end{eqnarray}
Using the definitions in (\ref{aggregate}) and (\ref{aggrgate-nabla}), we have the conclusion.
\end{proof}

The next lemma describes a progress in one iteration of (\ref{non-acc-s1})-(\ref{non-acc-s4}).
\begin{lemma}\label{non-acc-lemma1}
Suppose that Assumption \ref{assumption_f} and conditions (\ref{UV-define}) and (\ref{UVbound}) hold. Then  for algorithm (\ref{non-acc-s1})-(\ref{non-acc-s4}), we have
\begin{eqnarray}
\begin{aligned}\label{non-acc-one-iter}
&\E_{\S^k}\left[f(\x^{k+1})-f(\x^*)+\frac{1}{\alpha}\<\blambda^*,\U\x^{k+1}\>\right]\\
~~&\leq\left(\frac{1}{2\alpha}-\frac{\mu}{2}\right)\|\x^k-\x^*\|^2-\frac{1}{2\alpha}\E_{\S^k}\big[\|\x^{k+1}-\x^*\|^2\big]\\
~~&\quad+\frac{1}{2\alpha}\left( \|\blambda^k-\blambda^*\|^2-\E_{\S^k}\big[\|\blambda^{k+1}-\blambda^*\|^2\big] \right)\\
~~&\quad+\frac{1}{2\tau}\E_{\S^k}\big[\|\nabla f(\x^k)-\nabla^k\|^2\big]-\frac{1}{2\alpha}\|\V\x^k\|^2-\left(\frac{1}{4\alpha}-\frac{\tau+L_m}{2}\right)\E_{\S^k}\big[\|\x^{k+1}-\x^k\|^2\big],
\end{aligned}
\end{eqnarray}
for some $\tau>0$ and $L_m=\max\{L_f,\kappa\mu\}$.
\end{lemma}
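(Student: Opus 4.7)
My plan is to combine three ingredients: $L_f$-smoothness together with $\mu$-strong convexity of $f$, the primal--dual update structure of (\ref{non-acc-s2})--(\ref{non-acc-s3}), and the unbiasedness $\E_{\S^k}[\nabla^k]=\nabla f(\x^k)$ from (\ref{non-acc-nabla-y}). I would start from the standard pair of inequalities: smoothness gives $f(\x^{k+1})\leq f(\x^k)+\<\nabla f(\x^k),\x^{k+1}-\x^k\>+\frac{L_f}{2}\|\x^{k+1}-\x^k\|^2$, and strong convexity applied between $\x^k$ and $\x^*$ gives $f(\x^k)-f(\x^*)\leq\<\nabla f(\x^k),\x^k-\x^*\>-\frac{\mu}{2}\|\x^k-\x^*\|^2$. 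Adding these yields $f(\x^{k+1})-f(\x^*)\leq\<\nabla f(\x^k),\x^{k+1}-\x^*\>+\frac{L_f}{2}\|\x^{k+1}-\x^k\|^2-\frac{\mu}{2}\|\x^k-\x^*\|^2$.

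To bring in the stochastic estimator, I would split $\<\nabla f(\x^k),\x^{k+1}-\x^*\>=\<\nabla^k,\x^{k+1}-\x^*\>+\<\nabla f(\x^k)-\nabla^k,\x^{k+1}-\x^*\>$, and then further decompose the noise inner product into $\<\nabla f(\x^k)-\nabla^k,\x^k-\x^*\>+\<\nabla f(\x^k)-\nabla^k,\x^{k+1}-\x^k\>$. Because $\x^k$ is determined before $\S^k$ is drawn, (\ref{non-acc-nabla-y}) makes the first piece vanish in expectation; the second piece is dispatched by Young's inequality with parameter $\tau$, producing exactly the $\frac{1}{2\tau}\|\nabla f(\x^k)-\nabla^k\|^2+\frac{\tau}{2}\|\x^{k+1}-\x^k\|^2$ appearing in the statement.

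Next, I would substitute the update rule $\alpha\nabla^k=(\x^k-\x^{k+1})-\U\blambda^k-\V^2\x^k$ into $\<\nabla^k,\x^{k+1}-\x^*\>$ and apply polarization to each of the three resulting inner products. The piece $\<\x^k-\x^{k+1},\x^{k+1}-\x^*\>$ telescopes on $\|\cdot-\x^*\|^2$ while generating $-\frac{1}{2}\|\x^{k+1}-\x^k\|^2$. Using $\U\x^*=0$ and the dual update $\U\x^{k+1}=\blambda^{k+1}-\blambda^k$, the piece $\<\U\blambda^k,\x^{k+1}-\x^*\>$ rewrites as $\<\blambda^*,\U\x^{k+1}\>$ (which lands on the left-hand side of the lemma) plus a polarization expansion of $\<\blambda^k-\blambda^*,\blambda^{k+1}-\blambda^k\>$, yielding the desired telescope on $\|\blambda^k-\blambda^*\|^2$ along with a residual $+\frac{1}{2}\|\blambda^{k+1}-\blambda^k\|^2=+\frac{1}{2}\|\U\x^{k+1}\|^2$. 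Using $\V\x^*=0$, the piece $\<\V^2\x^k,\x^{k+1}-\x^*\>=\<\V\x^k,\V\x^{k+1}\>$ polarizes into $\frac{1}{2}\|\V\x^k\|^2+\frac{1}{2}\|\V\x^{k+1}\|^2-\frac{1}{2}\|\V(\x^k-\x^{k+1})\|^2$.

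The crucial step, and the one I expect to be the main obstacle to track precisely, is a cross-cancellation: after multiplication by $-1/\alpha$, the leftover $\frac{1}{2\alpha}\|\U\x^{k+1}\|^2$ from the dual term is absorbed by the $-\frac{1}{2\alpha}\|\V\x^{k+1}\|^2$ from the $\V$ term, valid by the first inequality $\|\U\x\|^2\leq\|\V\x\|^2$ in (\ref{UVbound}); only the $-\frac{1}{2\alpha}\|\V\x^k\|^2$ stated in the lemma survives on the $\V$-side. The residual $\frac{1}{2\alpha}\|\V(\x^k-\x^{k+1})\|^2$ is then bounded by $\frac{1}{4\alpha}\|\x^k-\x^{k+1}\|^2$ via the second inequality in (\ref{UVbound}). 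Collecting all contributions to $\|\x^{k+1}-\x^k\|^2$ (namely $-\frac{1}{2\alpha}$ from the first polarization, $+\frac{1}{4\alpha}$ from the $\V$ residual, $\frac{L_f}{2}$ from smoothness, and $\frac{\tau}{2}$ from Young's) yields coefficient $-\frac{1}{4\alpha}+\frac{\tau+L_f}{2}$, which I would weaken to $-\frac{1}{4\alpha}+\frac{\tau+L_m}{2}$ using $L_m=\max\{L_f,\kappa\mu\}\geq L_f$ to match the stated form.
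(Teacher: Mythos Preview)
Your proposal is correct and follows essentially the same route as the paper: the paper starts from $L_m$-smoothness, splits $\<\nabla^k,\x^{k+1}-\x^k\>=\<\nabla^k,\x^{k+1}-\x^*\>+\<\nabla^k,\x^*-\x^k\>$, handles the latter via $\E_{\S^k}[\nabla^k]=\nabla f(\x^k)$ plus strong convexity, and then treats $\<\nabla^k,\x^{k+1}-\x^*\>$ by exactly the polarization and cross-cancellation with (\ref{UVbound}) that you describe. Your variant---combining smoothness and strong convexity first and splitting off the noise term $\<\nabla f(\x^k)-\nabla^k,\x^k-\x^*\>$ to kill in expectation---is algebraically equivalent, and your final weakening $L_f\leq L_m$ matches the paper's direct use of $L_m$ in the smoothness bound.
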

\begin{proof}
From the $L_f$-smoothness of $f(\x)$ and the definition of $L_m$, we have
\begin{align}
f(\x^{k+1})\leq& f(\x^k)+\<\nabla f(\x^k),\x^{k+1}-\x^k\>+\frac{L_m}{2}\|\x^{k+1}-\x^k\|^2\notag\\
=& f(\x^k)+\<\nabla f(\x^k)-\nabla^k,\x^{k+1}-\x^k\>+\<\nabla^k,\x^{k+1}-\x^k\>+\frac{L_m}{2}\|\x^{k+1}-\x^k\|^2\notag\\
&\hspace*{-0.25cm}\begin{aligned}\label{non-acc-cont1}
\overset{a}\leq& f(\x^k)+\frac{1}{2\tau}\|\nabla f(\x^k)-\nabla^k\|^2+\frac{\tau+L_m}{2}\|\x^{k+1}-\x^k\|^2\\
&+\<\nabla^k,\x^{k+1}-\x^*\>+\<\nabla^k,\x^*-\x^k\>,
\end{aligned}
\end{align}
where we use Young's inequality in $\overset{a}\leq$. Since
\begin{eqnarray}
&&\nabla^k=\frac{1}{\alpha}(\x^k-\x^{k+1})-\frac{1}{\alpha}\U\blambda^k-\frac{1}{\alpha}\V^2\x^k\label{non-acc-nabla},\\
&&\blambda^{k+1}=\blambda^k+\U\x^{k+1}\label{non-acc-lambda},
\end{eqnarray}
from (\ref{non-acc-s2}) and (\ref{non-acc-s3}), we have
\begin{eqnarray}
&&\hspace*{-2cm}\<\nabla^k,\x^{k+1}-\x^*\>\notag\\
&&\hspace*{-2cm}~\overset{b}=\frac{1}{\alpha}\<\x^k-\x^{k+1},\x^{k+1}-\x^*\>-\frac{1}{\alpha}\<\blambda^k,\U\x^{k+1}\>-\frac{1}{\alpha}\<\V\x^k,\V\x^{k+1}\>\notag\\
&&\hspace*{-2cm}~\overset{c}=\frac{1}{\alpha}\<\x^k-\x^{k+1},\x^{k+1}-\x^*\>-\frac{1}{\alpha}\<\blambda^*,\U\x^{k+1}\>\notag\\
&&\hspace*{-2cm}~\quad-\frac{1}{\alpha}\<\blambda^k-\blambda^*,\blambda^{k+1}-\blambda^k\>-\frac{1}{\alpha}\<\V\x^k,\V\x^{k+1}\>\notag\\
&&\hspace*{-2cm}~=\frac{1}{2\alpha}\left( \|\x^k-\x^*\|^2-\|\x^{k+1}-\x^*\|^2-\|\x^{k+1}-\x^k\|^2 \right)\notag\\
&&\hspace*{-2cm}~\quad+\frac{1}{2\alpha}\left( \|\blambda^k-\blambda^*\|^2-\|\blambda^{k+1}-\blambda^*\|^2+\|\blambda^{k+1}-\blambda^k\|^2 \right)\notag\\
&&\hspace*{-2cm}~\quad-\frac{1}{2\alpha}\left(\|\V\x^k\|^2+\|\V\x^{k+1}\|^2-\|\V\x^k-\V\x^{k+1}\|^2\right)-\frac{1}{\alpha}\<\blambda^*,\U\x^{k+1}\>\notag\\
&&\hspace*{-2cm}\begin{aligned}\label{non-acc-cont4}
&~\overset{d}\leq\frac{1}{2\alpha}\left( \|\x^k-\x^*\|^2-\|\x^{k+1}-\x^*\|^2\right)+\frac{1}{2\alpha}\left( \|\blambda^k-\blambda^*\|^2-\|\blambda^{k+1}-\blambda^*\|^2 \right)\\
&~\quad-\frac{1}{2\alpha}\|\V\x^k\|^2-\frac{1}{4\alpha}\|\x^{k+1}-\x^k\|^2-\frac{1}{\alpha}\<\blambda^*,\U\x^{k+1}\>,
\end{aligned}
\end{eqnarray}
where we use $\U\x^*=0$, $\V\x^*=0$, and the symmetry of $\U$ and $\V$ in $\overset{b}=$, (\ref{non-acc-lambda}) in $\overset{c}=$, $\|\blambda^{k+1}-\blambda^k\|^2=\|\U\x^{k+1}\|^2\leq\|\V\x^{k+1}\|^2$ and $\|\V(\x^{k+1}-\x^k)\|^2\leq\frac{1}{2}\|\x^{k+1}-\x^k\|^2$ in $\overset{d}\leq$. On the other hand, from (\ref{non-acc-nabla-y}) and the strong convexity of $f(\x)$, we have
\begin{eqnarray}
\E_{\S^k}\left[\<\nabla^k,\x^*-\x^k\>\right]=\<\nabla f(\x^k),\x^*-\x^k\>\leq f(\x^*)-f(\x^k)-\frac{\mu}{2}\|\x^k-\x^*\|^2.\label{non-acc-cont5}
\end{eqnarray}
Plugging (\ref{non-acc-cont4}) and (\ref{non-acc-cont5}) into (\ref{non-acc-cont1}), we have
\begin{eqnarray}
\begin{aligned}\notag
&\E_{\S^k}\big[f(\x^{k+1})\big]\\
~~&\leq f(\x^*)-\frac{\mu}{2}\|\x^k-\x^*\|^2+\frac{1}{2\tau}\E_{\S^k}\big[\|\nabla f(\x^k)-\nabla^k\|^2\big]\\
~~&\quad+\frac{1}{2\alpha}\left( \|\x^k-\x^*\|^2-\E_{\S^k}\big[\|\x^{k+1}-\x^*\|^2\big]\right)+\frac{1}{2\alpha}\left( \|\blambda^k-\blambda^*\|^2-\E_{\S^k}\big[\|\blambda^{k+1}-\blambda^*\|^2\big] \right)\\
~~&\quad-\frac{1}{2\alpha}\|\V\x^k\|^2-\left(\frac{1}{4\alpha}-\frac{\tau+L_m}{2}\right)\E_{\S^k}\big[\|\x^{k+1}-\x^k\|^2\big]-\frac{1}{\alpha}\E_{\S^k}\left[\<\blambda^*,\U\x^{k+1}\>\right].
\end{aligned}
\end{eqnarray}
Rearranging the terms, we have the conclusion.
\end{proof}

To prove the linear convergence, we should make the constant before $\|\blambda^k-\blambda^*\|^2$ in (\ref{non-acc-one-iter}) smaller than that before $\|\blambda^{k+1}-\blambda^*\|^2$, which is established in the next lemma.
\begin{lemma}\label{non-acc-lemma2}
Suppose that Assumption \ref{assumption_f} and conditions  (\ref{UV-define}) and (\ref{UVbound}) hold. Let $\alpha=\frac{1}{28L_m}$ and $\blambda^0=0$. Then  for algorithm (\ref{non-acc-s1})-(\ref{non-acc-s4}), we have
\begin{eqnarray}
\begin{aligned}\label{non-acc-cont2}
&\frac{1}{2}\E_{\S^k}\left[f(\x^{k+1})-f(\x^*)+\frac{1}{\alpha}\<\blambda^*,\U\x^{k+1}\>\right]\\
~~&\leq\left(\frac{1}{2\alpha}-\frac{\mu}{2}\right)\|\x^k-\x^*\|^2-\frac{1}{2\alpha}\E_{\S^k}\big[\|\x^{k+1}-\x^*\|^2\big]\\
~~&\quad+\left(\frac{1}{2\alpha}-\frac{1-\nu}{4\kappa L_m\alpha^2}\right)\|\blambda^k-\blambda^*\|^2-\frac{1}{2\alpha}\E_{\S^k}\big[\|\blambda^{k+1}-\blambda^*\|^2\big]\\
~~&\quad+\frac{\overline L_f}{6L_mb}\left(f(\x^k)-f(\x^*)+\frac{1}{\alpha}\<\blambda^*,\U\x^k\>\right)+\frac{\overline L_f}{6L_mb}\left(f(\w^k)-f(\x^*)+\frac{1}{\alpha}\<\blambda^*,\U\w^k\>\right),\hspace*{-1cm}
\end{aligned}
\end{eqnarray}
with $\nu=\frac{3140}{3141}$.
\end{lemma}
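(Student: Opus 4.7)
The plan is to upgrade Lemma \ref{non-acc-lemma1} in two ways: substitute the variance-reduction bound of Lemma \ref{VR-lemma} into the $\|\nabla f(\x^k)-\nabla^k\|^2$ term, and introduce an additional $-\frac{1-\nu}{4\kappa L_m\alpha^2}\|\blambda^k-\blambda^*\|^2$ on the right-hand side by exploiting the lower bound $\|\U(\blambda^k-\blambda^*)\|^2\geq\frac{1}{\kappa}\|\blambda^k-\blambda^*\|^2$ from (\ref{UVbound}). This lower bound is applicable because an easy induction using $\blambda^0=0$ together with the recursion $\blambda^{k+1}=\blambda^k+\U\x^{k+1}$ shows $\blambda^k\in\mathrm{Span}(\U)$ for every $k$, while $\blambda^*\in\mathrm{Span}(\U)$ by construction of $\blambda^*$.

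The crucial intermediate step is to upper bound $\|\U(\blambda^k-\blambda^*)\|^2$. Combining (\ref{non-acc-nabla}) with (\ref{opt-cond}) yields the identity
\begin{equation*}
\U(\blambda^k-\blambda^*)=(\x^k-\x^{k+1})-\V^2(\x^k-\x^*)-\alpha(\nabla^k-\nabla f(\x^*)).
\end{equation*}
I would decompose $\nabla^k-\nabla f(\x^*)=(\nabla^k-\nabla f(\x^k))+(\nabla f(\x^k)-\nabla f(\x^{k+1}))+(\nabla f(\x^{k+1})-\nabla f(\x^*))$, apply Young's inequality, the Lipschitz bound $\|\nabla f(\x^k)-\nabla f(\x^{k+1})\|\leq L_f\|\x^k-\x^{k+1}\|$, the smoothness-convexity bound (\ref{smooth-cond}) in the form $\|\nabla f(\x^{k+1})-\nabla f(\x^*)\|^2\leq 2L_f(f(\x^{k+1})-f(\x^*)+\frac{1}{\alpha}\<\blambda^*,\U\x^{k+1}\>)$, and the contraction $\|\V^2\x\|^2\leq\frac{1}{2}\|\V\x\|^2$. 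This produces an upper bound of the form
\begin{equation*}
\|\U(\blambda^k-\blambda^*)\|^2\leq C_1\|\V\x^k\|^2+C_2\|\x^{k+1}-\x^k\|^2+C_3\|\nabla^k-\nabla f(\x^k)\|^2+C_4\bigl(f(\x^{k+1})-f(\x^*)+\tfrac{1}{\alpha}\<\blambda^*,\U\x^{k+1}\>\bigr),
\end{equation*}
with constants $C_1,\dots,C_4$ that are explicit in $\alpha$ and $L_f$.

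I would then assemble the final inequality: start from Lemma \ref{non-acc-lemma1}, apply Lemma \ref{VR-lemma}, and subtract the $\blambda$-bound multiplied by $\frac{1-\nu}{4\kappa L_m\alpha^2}\kappa=\frac{1-\nu}{4L_m\alpha^2}$ (using $\|\blambda^k-\blambda^*\|^2\leq\kappa\|\U(\blambda^k-\blambda^*)\|^2$) so that the coefficient of $\|\blambda^k-\blambda^*\|^2$ becomes exactly $\frac{1}{2\alpha}-\frac{1-\nu}{4\kappa L_m\alpha^2}$. The $C_3\|\nabla^k-\nabla f(\x^k)\|^2$ contribution from the $\blambda$-bound must itself be expanded via Lemma \ref{VR-lemma}, so $\tau$ is taken slightly above $12L_m$ (specifically $\tau=\frac{12L_m}{1-48(1-\nu)}$) to ensure that the total coefficient on $(f(\x^k)-f(\x^*)+\frac{1}{\alpha}\<\blambda^*,\U\x^k\>)$ and on $(f(\w^k)-f(\x^*)+\frac{1}{\alpha}\<\blambda^*,\U\w^k\>)$ is exactly $\frac{\overline L_f}{6L_mb}$. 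The $C_4$ contribution, which is a positive multiple of $f(\x^{k+1})-f(\x^*)+\frac{1}{\alpha}\<\blambda^*,\U\x^{k+1}\>$, is absorbed by moving half of the LHS of Lemma \ref{non-acc-lemma1} to the right, which is the origin of the $\frac{1}{2}$ factor in front of the LHS of (\ref{non-acc-cont2}).

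The main obstacle is the simultaneous bookkeeping of the four coefficient constraints with $\alpha=\frac{1}{28L_m}$: $C_1$ must be dominated by the available $\frac{1}{2\alpha}$, $C_2$ by the slack $\frac{1}{4\alpha}-\frac{\tau+L_m}{2}$ (which must itself remain positive after $\tau$ is chosen), $C_3$ by the margin in $\tau$, and $C_4$ by $\frac{1}{2}$. The tightest of these is the $\|\x^{k+1}-\x^k\|^2$ constraint, since $C_2$ scales like $\frac{1-\nu}{L_m\alpha^2}\sim784(1-\nu)L_m$, while the available slack is only of order $L_m$; this is exactly what forces $1-\nu$ to be taken as small as $\frac{1}{3141}$, giving the precise value $\nu=\frac{3140}{3141}$ in the statement.
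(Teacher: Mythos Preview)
Your proposal is correct and follows essentially the same route as the paper. The only cosmetic difference is the direction in which the smoothness inequality (\ref{smooth-cond}) is invoked: the paper writes $f(\x^{k+1})-f(\x^*)+\frac{1}{\alpha}\langle\blambda^*,\U\x^{k+1}\rangle\geq\frac{1}{2L_m\alpha^2}\|\alpha\nabla f(\x^{k+1})+\U\blambda^*\|^2$ and then expands the right-hand norm via (\ref{non-acc-nabla}) to isolate $\U(\blambda^k-\blambda^*)$, whereas you first write the identity for $\U(\blambda^k-\blambda^*)$ and then bound $\|\nabla f(\x^{k+1})-\nabla f(\x^*)\|^2$ by $2L_f$ times the function gap; these are the same inequality read in opposite directions, and both lead to the same Young-inequality splitting with parameter $\nu$, the same error terms, and the same binding constraint on $\|\x^{k+1}-\x^k\|^2$ (the paper's choice $\tau=12.5L_m$, $\nu=\tfrac{3140}{3141}$ makes that constraint hold with equality, matching your diagnosis).
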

\begin{proof}
From the optimality condition in (\ref{opt-cond}) and the smoothness property in (\ref{smooth-cond}), we have
\begin{eqnarray}
&&\hspace*{-1cm}f(\x^{k+1})-f(\x^*)+\frac{1}{\alpha}\<\lambda^*,\U\x^{k+1}\>=f(\x^{k+1})-f(\x^*)-\<\nabla f(\x^*),\x^{k+1}-\x^*\>\notag\\
&&\hspace*{-1cm}~\geq\frac{1}{2L_m}\|\nabla f(\x^{k+1})-\nabla f(\x^*)\|^2=\frac{1}{2L_m\alpha^2}\|\alpha\nabla f(\x^{k+1})+\U\lambda^*\|^2\notag\\
&&\hspace*{-1cm}~\overset{a}=\frac{1}{2L_m\alpha^2}\hspace*{-0.05cm}\left\| \x^{k+1}\hspace*{-0.05cm}-\hspace*{-0.05cm}\x^k\hspace*{-0.05cm}+\hspace*{-0.05cm}\U(\blambda^k\hspace*{-0.05cm}-\hspace*{-0.05cm}\blambda^*)\hspace*{-0.05cm}+\hspace*{-0.05cm}\V^2\x^k\hspace*{-0.05cm}+\hspace*{-0.05cm}\alpha\nabla^k\hspace*{-0.05cm}-\hspace*{-0.05cm}\alpha\nabla f(\x^k)\hspace*{-0.05cm}+\hspace*{-0.05cm}\alpha\nabla f(\x^k)\hspace*{-0.05cm}-\hspace*{-0.05cm}\alpha\nabla f(\x^{k+1}) \right\|^2\hspace*{-0.05cm}\notag\\
&&\hspace*{-1cm}~\overset{b}\geq\frac{1-\nu}{2L_m\alpha^2}\|\U(\blambda^k-\blambda^*)\|^2-\frac{1}{2L_m\alpha^2}\left(\frac{1}{\nu}-1\right)\left\| \x^{k+1}-\x^k+\V^2\x^k\right.\notag\\
&&\hspace*{-1cm}~\quad\hspace*{2.5cm}\left.+\alpha\nabla^k-\alpha\nabla f(\x^k)+\alpha\nabla f(\x^k)-\alpha\nabla f(\x^{k+1}) \right\|^2\notag\\
&&\hspace*{-1cm}\begin{aligned}\label{non-acc-cont6}
~&\overset{c}\geq\frac{1-\nu}{2\kappa L_m\alpha^2}\|\blambda^k-\blambda^*\|^2-\left(\frac{2}{L_m\alpha^2}+2L_m\right)\left(\frac{1}{\nu}-1\right)\|\x^{k+1}-\x^k\|^2\\
~&\quad-\frac{2}{L_m}\left(\frac{1}{\nu}-1\right)\|\nabla^k-\nabla f(\x^k)\|^2-\frac{2}{L_m\alpha^2}\left(\frac{1}{\nu}-1\right)\|\V\x^k\|^2,
\end{aligned}
\end{eqnarray}
where we use (\ref{non-acc-nabla}) in $\overset{a}=$, $\|a-b\|^2\geq (1-\nu)\|a\|^2-(\frac{1}{\nu}-1)\|b\|^2$ in $\overset{b}\geq$ for some $0<\nu<1$, (\ref{UVbound}), $\|\sum_{i=1}^n a_i\|^2\leq n\sum_{i=1}^n\|a_i\|^2$, the $L_f$-smoothness of $f(\x)$, and $\|\V^2\x^k\|^2\leq \|\V\x^k\|^2$ in $\overset{c}\geq$, where the requirement of $\blambda^k-\blambda^*\in\mbox{Span}(\U)$ in (\ref{UVbound}) holds since $\blambda^0\in\mbox{Span}(\U)$, the update in (\ref{non-acc-s3}), and $\blambda^*\in\mbox{Span}(\U)$. Dividing both sides of (\ref{non-acc-cont6}) by 2 and plugging it into (\ref{non-acc-one-iter}), we have
\begin{eqnarray}
\begin{aligned}\notag
&\frac{1}{2}\E_{\S^k}\left[f(\x^{k+1})-f(\x^*)+\frac{1}{\alpha}\<\blambda^*,\U\x^{k+1}\>\right]\\
~~&\leq\left(\frac{1}{2\alpha}-\frac{\mu}{2}\right)\|\x^k-\x^*\|^2-\frac{1}{2\alpha}\E_{\S^k}\big[\|\x^{k+1}-\x^*\|^2\big]\\
~~&\quad+\left(\frac{1}{2\alpha}-\frac{1-\nu}{4\kappa L_m\alpha^2}\right)\|\blambda^k-\blambda^*\|^2-\frac{1}{2\alpha}\E_{\S^k}\big[\|\blambda^{k+1}-\blambda^*\|^2\big]\\
~~&\quad+\left(\frac{1}{2\tau}+\frac{1}{L_m}\left(\frac{1}{\nu}-1\right)\right)\E_{\S^k}\big[\|\nabla f(\x^k)-\nabla^k\|^2\big]-\left(\frac{1}{2\alpha}-\frac{1}{L_m\alpha^2}\left(\frac{1}{\nu}-1\right)\right)\|\V\x^k\|^2\\
~~&\quad-\left(\frac{1}{4\alpha}-\frac{\tau+L_m}{2}-\left(\frac{1}{L_m\alpha^2}+L_m\right)\left(\frac{1}{\nu}-1\right)\right)\E_{\S^k}\big[\|\x^k-\x^{k+1}\|^2\big].
\end{aligned}
\end{eqnarray}
Letting $\tau=12.5L_m$, $\nu=\frac{3140}{3141}$, and $\alpha=\frac{1}{28L_m}$, such that $\frac{1}{2\alpha}-\frac{1}{L_m\alpha^2}(\frac{1}{\nu}-1)\geq 0$, $\frac{1}{4\alpha}-\frac{\tau+L_m}{2}-(\frac{1}{L_m\alpha^2}+L_m)(\frac{1}{\nu}-1)\geq 0$, and $\frac{1}{2\tau}+\frac{1}{L_m}(\frac{1}{\nu}-1)\leq\frac{1}{24L_m}$, and using (\ref{non-acc-vr}), we have the conclusion.
\end{proof}

Now, we are ready to prove Theorem \ref{non-acc-theorem}.
\begin{proof}
From step (\ref{non-acc-s4}), we have
\begin{eqnarray}
\begin{aligned}\notag
&\E_{w_{(i)}^{k+1}}\left[f_{(i)}(w_{(i)}^{k+1})-\<\nabla f_{(i)}(x^*),w_{(i)}^{k+1}-x^*\>\right]\\
~~&=\frac{b}{n}\left(f_{(i)}(x_{(i)}^k)-\<\nabla f_{(i)}(x^*),x_{(i)}^k-x^*\>\right)+\left(1-\frac{b}{n}\right)\left(f_{(i)}(w_{(i)}^k)-\<\nabla f_{(i)}(x^*),w_{(i)}^k-x^*\>\right).
\end{aligned}
\end{eqnarray}
From the definitions in (\ref{aggregate}), the optimality condition in (\ref{opt-cond}), and the fact that each $w_{(i)}^{k+1}$ is computed independently at each node, we further have
\begin{eqnarray}
\begin{aligned}\label{non-acc-cont3}
&\E_{\w^{k+1}}\left[f(\w^{k+1})-f(\x^*)+\frac{1}{\alpha}\<\blambda^*,\U\w^{k+1}\>\right]\\
~~&=\frac{b}{n}\left(f(\x^k)-f(\x^*)+\frac{1}{\alpha}\<\blambda^*,\U\x^k\>\right)+\left(1-\frac{b}{n}\right)\left(f(\w^k)-f(\x^*)+\frac{1}{\alpha}\<\blambda^*,\U\w^k\>\right).
\end{aligned}
\end{eqnarray}
Multiplying both sides of (\ref{non-acc-cont3}) by $\frac{n}{b}(\frac{1}{2}-\frac{b}{10n}-\frac{\overline L_f}{6L_mb})$ and adding it to (\ref{non-acc-cont2}), taking expectation with respect to $\xi^k$, from the easy-to-identity equation $\frac{n}{b}(\frac{1}{2}-\frac{b}{10n}-\frac{\overline L_f}{6L_mb})(1-\frac{b}{n})+\frac{\overline L_f}{6L_mb}\leq \frac{n}{b}(\frac{1}{2}-\frac{b}{10n}-\frac{\overline L_f}{6L_mb})(1-\frac{b}{10n})$ under the condition $\frac{\overline L_f}{L_mb}\leq 1$, we have
\begin{eqnarray}
\begin{aligned}\notag
&\frac{1}{2}\E_{\xi^{k+1}}\left[f(\x^{k+1})-f(\x^*)+\frac{1}{\alpha}\<\blambda^*,\U\x^{k+1}\>\right]\\
~~&\quad+\frac{n}{b}\left(\frac{1}{2}-\frac{b}{10n}-\frac{\overline L_f}{6L_mb}\right)\E_{\xi^{k+1}}\left[f(\w^{k+1})-f(\x^*)+\frac{1}{\alpha}\<\blambda^*,\U\w^{k+1}\>\right]\\
~~&\quad+\frac{1}{2\alpha}\E_{\xi^{k+1}}\big[\|\x^{k+1}-\x^*\|^2\big]+\frac{1}{2\alpha}\E_{\xi^{k+1}}\big[\|\blambda^{k+1}-\blambda^*\|^2\big]\\
~~&\leq\left(\frac{1}{2}-\frac{b}{10n}\right)\E_{\xi^k}\left[f(\x^k)-f(\x^*)+\frac{1}{\alpha}\<\blambda^*,\U\x^k\>\right]\\
~~&\quad+\frac{n}{b}\left(\frac{1}{2}-\frac{b}{10n}-\frac{\overline L_f}{6L_mb}\right)\left(1-\frac{b}{10n}\right)\E_{\xi^k}\left[f(\w^k)-f(\x^*)+\frac{1}{\alpha}\<\blambda^*,\U\w^k\>\right]\\
~~&\quad+\left(\frac{1}{2\alpha}-\frac{\mu}{2}\right)\E_{\xi^k}\big[\|\x^k-\x^*\|^2\big]+\left(\frac{1}{2\alpha}-\frac{1-\nu}{4\kappa L_m\alpha^2}\right)\E_{\xi^k}\big[\|\blambda^k-\blambda^*\|^2\big]
\end{aligned}
\end{eqnarray}
\begin{eqnarray}
\hspace*{-1.5cm}\begin{aligned}\notag
~~&\overset{a}\leq\left\{\frac{1}{2}\E_{\xi^k}\left[f(\x^k)-f(\x^*)+\frac{1}{\alpha}\<\blambda^*,\U\x^k\>\right]\right.\\
~~&\qquad\left.+\frac{n}{b}\left(\frac{1}{2}-\frac{b}{10n}-\frac{\overline L_f}{6L_mb}\right)\E_{\xi^k}\left[f(\w^k)-f(\x^*)+\frac{1}{\alpha}\<\blambda^*,\U\w^k\>\right]\right.\\
~~&\qquad\left.+\frac{1}{2\alpha}\E_{\xi^k}\big[\|\x^k-\x^*\|^2\big]+\frac{1}{2\alpha}\E_{\xi^k}\big[\|\blambda^k-\blambda^*\|^2\big]\right\}\\
~~&\quad\times\max\left\{1-\frac{b}{5n},1-\frac{b}{10n},1-\alpha\mu,1-\frac{1-\nu}{2\kappa L_m\alpha}\right\},
\end{aligned}
\end{eqnarray}
where we use the fact $f(\x)+\frac{1}{\alpha}\<\blambda^*,\U\x\>\geq f(\x^*)+\frac{1}{\alpha}\<\blambda^*,\U\x^*\>$ for any $\x$, and $\frac{1}{2}-\frac{b}{10n}-\frac{\overline L_f}{6L_mb}>0$ in $\overset{a}\leq$. From the setting of $\alpha=\frac{1}{28L_m}$, we know the algorithm needs $\bO((\frac{n}{b}+\frac{L_m}{\mu}+\kappa)\log\frac{1}{\epsilon})\overset{b}=\bO((\frac{n}{b}+\frac{L_f}{\mu}+\kappa)\log\frac{1}{\epsilon})$ iterations to find $\x^k$ such that $\E_{\xi^k}\big[\|\x^k-\x^*\|^2\big]\leq\epsilon$, where we use $L_m=\max\{L_f,\kappa\mu\}$ in $\overset{b}=$. Recall that each iteration requires the time of one communication round and $b$ stochastic gradient evaluations in average, see Remark \ref{remark3}. So the communication complexity is $\bO((\frac{n}{b}+\frac{L_f}{\mu}+\kappa)\log\frac{1}{\epsilon})$, and the stochastic gradient computation complexity is $\bO((n+\frac{bL_f}{\mu}+b\kappa)\log\frac{1}{\epsilon})=\bO((n+\frac{bL_m}{\mu})\log\frac{1}{\epsilon})$.

Case 1. If $\kappa\leq \max\{\frac{\overline L_f}{\mu},n\}$, we have $\kappa\mu\leq\max\{\overline L_f,n\mu\}$ and $b=\frac{\max\{\overline L_f,n\mu\}}{\max\{L_f,\kappa\mu\}}\geq\frac{\max\{\overline L_f,n\mu\}}{\max\{\overline L_f,n\mu\}}=1$, where we use $L_f\leq\overline L_f\leq\max\{\overline L_f,n\mu\}$. We also have $b=\frac{\max\{\overline L_f,n\mu\}}{\max\{L_f,\kappa\mu\}}\leq \frac{\max\{\overline L_f,n\mu\}}{L_f}\leq n$, where we use $\overline L_f\leq nL_f$ given in (\ref{L-relation}) and $L_f\geq\mu$. This verifies that the setting of $b$ is meaningful. On the other hand, since $b=\frac{\max\{\overline L_f,n\mu\}}{L_m}$, we have $\frac{\overline L_f}{L_mb}=\frac{\overline L_f}{\max\{\overline L_f,n\mu\}}\leq 1$ and $\frac{n}{b}=\frac{nL_m}{\max\{\overline L_f,n\mu\}}\leq\frac{nL_m}{n\mu}=\max\{\frac{L_f}{\mu},\kappa\}$. Then  the communication complexity is $\bO((\frac{L_f}{\mu}+\kappa)\log\frac{1}{\epsilon})$, and the stochastic gradient computation complexity is $\bO((\frac{\overline L_f}{\mu}+n)\log\frac{1}{\epsilon})$, where we use $\frac{bL_m}{\mu}=\frac{\max\{\overline L_f,n\mu\}}{\mu}\leq\frac{\overline L_f}{\mu}+n$.

If we choose $b\geq \frac{\max\{\overline L_f,n\mu\}}{L_m}$, similar to the above analysis, we also have $\frac{\overline L_f}{L_mb}\leq 1$ and $\frac{n}{b}\leq\max\{\frac{L_f}{\mu},\kappa\}$. So the communication complexity remains unchanged, but the stochastic gradient computation complexity increases. This verifies Remark \ref{remark2}(4). Specially, if we let $b=\frac{\max\{\overline L_f,n\mu\}}{\max\{L_f,\kappa\mu\}}\frac{\tau(\kappa_b+\kappa)}{\kappa_s+n}$, we have $\frac{bL_m}{\mu}=\tau(\kappa_b+\kappa)$.

Case 2. If $\kappa\geq \max\{\frac{\overline L_f}{\mu},n\}$, letting $b=1$, we have $\frac{\overline L_f}{L_mb}=\frac{\overline L_f}{L_m}\leq\frac{\overline L_f}{\kappa\mu}\leq 1$. The communication complexity and stochastic gradient computation complexity are both $\bO((\frac{L_f}{\mu}+n+\kappa)\log\frac{1}{\epsilon})=\bO((\frac{L_f}{\mu}+\kappa)\log\frac{1}{\epsilon})$, where we use $\kappa\geq n$.
\end{proof}

Now, we prove Theorem \ref{non-acc-theorem2}, which reduces the stochastic gradient computation complexity from $\bO((\frac{L_f}{\mu}+\kappa)\log\frac{1}{\epsilon})$ to $\bO((\frac{\overline L_f}{\mu}+n)\log\frac{1}{\epsilon})$ in the case of $\kappa\geq \max\{\frac{\overline L_f}{\mu},n\}$ by the zero-sample strategy.
\begin{proof}
From the definitions in (\ref{new-L2}), it can be easily checked that $\overline L_{(i)}'=\frac{\sum_{j=1}^{n'}L_{(i),j}}{n'}=\frac{\sum_{j=1}^{n}L_{(i),j}+\sum_{j=n+1}^{n'}L_{(i),j}}{n'}=\frac{n\overline L_{(i)}+n\mu n'-n\overline L_{(i)}}{n'}=n\mu$, $\overline L_f'=\overline L_{(i)}'=n\mu$, and $b=\frac{\max\{\overline L_f',n'\mu'\}}{\max\{L_f',\kappa\mu'\}}=\frac{n\mu}{\max\{\frac{nL_f}{\kappa},n\mu\}}=\frac{n\mu}{n\mu}=1$, where we use $\kappa\geq\kappa_s\geq\frac{L_f}{\mu}$.

Replacing $n$, $L_f$, $\mu$, and $\overline L_f$ in the proof of Theorem \ref{non-acc-theorem} by $n'$, $L_f'$, $\mu'$, and $\overline L_f'$ given in (\ref{new-L2}), respectively, we know $L_m=\max\{L_f',\kappa\mu'\}=n\mu$ and $\frac{\overline L_f'}{L_mb}=\frac{n\mu}{n\mu}=1$. So the algorithm needs $\bO((\frac{n'}{b}+\frac{L_f'}{\mu'}+\kappa)\log\frac{1}{\epsilon})=\bO((\kappa+\frac{L_f}{\mu})\log\frac{1}{\epsilon})=\bO(\kappa\log\frac{1}{\epsilon})$ iterations to find $\x^k$ such that $\E_{\xi^k}\big[\|\x^k-\x^*\|^2\big]\leq\epsilon$. So the communication complexity and the stochastic gradient computation complexity are both $\bO(\kappa\log\frac{1}{\epsilon})$. Since we select the samples in $\big[1,n\big]$ with probability $\frac{\sum_{j=1}^n L_{(i),j}}{\sum_{j=1}^{n'} L_{(i),j} }=\frac{n\overline L_f}{n\mu\kappa}=\frac{\overline L_f}{\mu\kappa}$, and the zero samples do not spend the computation time, so the valid number of stochastic gradient evaluations is $\bO(\frac{\overline L_f}{\mu\kappa}\kappa\log\frac{1}{\epsilon})=\bO(\frac{\overline L_f}{\mu}\log\frac{1}{\epsilon})$. On the other hand, we compute the full batch gradient with probability $\frac{b}{n'}=\frac{1}{\kappa}$, which takes $\bO(n\frac{1}{\kappa}\kappa\log\frac{1}{\epsilon})=\bO(n\log\frac{1}{\epsilon})$ valid stochastic gradient evaluations in total. So the final valid stochastic gradient computation complexity is $\bO((\frac{\overline L_f}{\mu}+n)\log\frac{1}{\epsilon})$.

At last, we explain that the zero samples do not destroy the proof of Theorem \ref{non-acc-theorem}. For the zero sample $f_{(i),j}(x)=0$, we have $\nabla f_{(i),j}(x)=0$. So it also satisfies the convexity and $L_{(i),j}$-smooth property (\ref{smooth-cond}) even for positive $L_{(i),j}$. We can check that (\ref{non-acc-vr}) and (\ref{non-acc-nabla-y}) also hold. In the proofs of Lemmas \ref{non-acc-lemma1} and \ref{non-acc-lemma2}, we use the smoothness and strong convexity of $f_{(i)}'(x)$, as explained in Section \ref{sec-non-acc-vr2}, which also hold.
\end{proof}

\subsection{Accelerated VR-EXTRA and VR-DIGing}
From Lemma D.2 in \citep{kat} and similar to Lemma \ref{VR-lemma}, we have
\begin{eqnarray}
\begin{aligned}\label{acc-vr}
\qquad\qquad\E_{\S^k}\big[\|\nabla^k-\nabla f(\y^k)\|^2\big]\leq\frac{2\overline L_f}{b}\left(f(\w^k)-f(\y^k)-\<\nabla f(\y^k),\w^k-\y^k\>\right).
\end{aligned}
\end{eqnarray}
Similar to (\ref{non-acc-nabla-y}), we also have
\begin{eqnarray}
\begin{aligned}\label{acc-nabla-y}
&\E_{\S^k}\big[\nabla^k\big]=\nabla f(\y^k).
\end{aligned}
\end{eqnarray}

The following lemma is the counterpart of Lemma \ref{non-acc-lemma1}, which gives a progress in one iteration of procedure (\ref{acc-s1})-(\ref{acc-s6}).
\begin{lemma}
Suppose that Assumption \ref{assumption_f} and conditions (\ref{UV-define}) and (\ref{UVbound}) hold. Let $\theta_1+\theta_2\leq1$. Then  for algorithm (\ref{acc-s1})-(\ref{acc-s6}), we have
\begin{eqnarray}
\begin{aligned}\label{acc-cont8}
&\E_{\S^k}\left[f(\x^{k+1})-f(\x^*)+\frac{1}{\alpha}\<\blambda^*,\U\x^{k+1}\>\right]\\
~~&\leq(1-\theta_1-\theta_2)\left(f(\x^k)-f(\x^*)+\frac{1}{\alpha}\<\blambda^*,\U\x^k\>\right)\\
~~&\quad+\theta_2\left(f(\w^k)-f(\x^*)+\frac{1}{\alpha}\<\blambda^*,\U\w^k\>\right)\\
~~&\quad+\left(\frac{\overline L_f}{\tau b}-\theta_2\right)\left(f(\w^k)-f(\y^k)-\<\nabla f(\y^k),\w^k-\y^k\>\right)\\
~~&\quad+\frac{\theta_1^2}{2\alpha}\|\z^k-\x^*\|^2-\left(\frac{\theta_1^2}{2\alpha}+\frac{\mu\theta_1}{2}\right)\E_{\S^k}\big[\|\z^{k+1}-\x^*\|^2\big]\\
~~&\quad+\frac{1}{2\alpha}\|\blambda^k-\blambda^*\|^2-\frac{1}{2\alpha}\E_{\S^k}\big[\|\blambda^{k+1}-\blambda^*\|^2\big]\\
~~&\quad-\frac{\theta_1^2}{2\alpha}\|\V\z^k\|^2-\left(\frac{\theta_1^2}{4\alpha}-\frac{\tau\theta_1^2+L_f\theta_1^2}{2}\right)\E_{\S^k}\big[\|\z^{k+1}-\z^k\|^2\big]-\frac{\mu\theta_1}{2}\E_{\S^k}\big[\|\z^{k+1}-\y^k\|^2\big],
\end{aligned}
\end{eqnarray}
for some $\tau>0$.
\end{lemma}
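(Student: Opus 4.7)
The plan is to extend the proof of Lemma~\ref{non-acc-lemma1} with two ingredients from Katyusha-style acceleration: a three-point convex combination decomposition of $f(\y^k)$ based on (\ref{acc-s1}), and the use of convexity as an \emph{equation} (rather than inequality) at $\w^k$, which extracts a $-\theta_2 D(\w^k,\y^k)$ residual with $D(\w^k,\y^k):=f(\w^k)-f(\y^k)-\<\nabla f(\y^k),\w^k-\y^k\>\ge 0$; this residual will later absorb the variance of $\nabla^k$ via (\ref{acc-vr}). Concretely, I would begin by applying $L_f$-smoothness of $f$ at $\y^k$ to get $f(\x^{k+1})\le f(\y^k)+\<\nabla f(\y^k),\x^{k+1}-\y^k\>+\frac{L_f}{2}\|\x^{k+1}-\y^k\|^2$, and use (\ref{acc-s5}) to rewrite $\x^{k+1}-\y^k=\theta_1(\z^{k+1}-\z^k)$. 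Next I expand $f(\y^k)$ by writing the strong-convexity identity at $\x^*$ and the convexity identities at $\w^k$ and $\x^k$, combining them with weights $\theta_1,\theta_2,1-\theta_1-\theta_2$; the identity $\theta_1\x^*+\theta_2\w^k+(1-\theta_1-\theta_2)\x^k-\y^k=\theta_1(\x^*-\z^k)$ from (\ref{acc-s1}) collapses the inner-product terms, and dropping only the non-negative residuals at $\x^*$ and $\x^k$ yields
\[f(\y^k)\le\theta_1 f(\x^*)+\theta_2 f(\w^k)+(1-\theta_1-\theta_2)f(\x^k)+\theta_1\<\nabla f(\y^k),\z^k-\x^*\>-\frac{\mu\theta_1}{2}\|\y^k-\x^*\|^2-\theta_2 D(\w^k,\y^k).\]

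Adding $\frac{1}{\alpha}\<\blambda^*,\U\x^{k+1}\>$ to both sides and decomposing it via (\ref{acc-s5}) and the definition of $\y^k$ into $\theta_2\<\blambda^*,\U\w^k\>$, $(1-\theta_1-\theta_2)\<\blambda^*,\U\x^k\>$, and $\theta_1\<\blambda^*,\U\z^{k+1}\>$, it remains to control $\theta_1\<\nabla f(\y^k),\z^{k+1}-\x^*\>+\frac{\theta_1}{\alpha}\<\blambda^*,\U\z^{k+1}\>$. Splitting $\nabla f(\y^k)=\nabla^k+(\nabla f(\y^k)-\nabla^k)$, for the $\nabla^k$ part I rewrite (\ref{acc-s3}) as the primal-dual optimality relation $\alpha\nabla^k=-\theta_1(\z^{k+1}-\z^k)-\mu\alpha(\z^{k+1}-\y^k)-\U\blambda^k-\theta_1\V^2\z^k$, take the inner product with $\z^{k+1}-\x^*$, and apply the three-point identities $2\<a-b,b-c\>=\|a-c\|^2-\|a-b\|^2-\|b-c\|^2$ on the $\z$- and $\y$-quadratic pieces. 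Using $\U\x^*=0$ together with (\ref{acc-s4}), the piece $-\frac{\theta_1}{\alpha}\<\U\blambda^k,\z^{k+1}-\x^*\>+\frac{\theta_1}{\alpha}\<\blambda^*,\U\z^{k+1}\>$ collapses to a $\|\blambda^k-\blambda^*\|^2$ telescope plus $\frac{1}{2\alpha}\|\blambda^{k+1}-\blambda^k\|^2$, while $\V\x^*=0$ handles the $\V$ piece. For the variance part $\theta_1\<\nabla f(\y^k)-\nabla^k,\z^{k+1}-\x^*\>$ I further split $\z^{k+1}-\x^*=(\z^{k+1}-\z^k)+(\z^k-\x^*)$; the second summand vanishes under $\E_{\S^k}$ by (\ref{acc-nabla-y}), and Young's inequality with parameter $\tau$ on the first produces $\frac{1}{2\tau}\|\nabla f(\y^k)-\nabla^k\|^2+\frac{\tau\theta_1^2}{2}\|\z^{k+1}-\z^k\|^2$. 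Taking $\E_{\S^k}$ and applying (\ref{acc-vr}) bounds the norm by $\frac{\overline L_f}{\tau b}D(\w^k,\y^k)$, which combines with the $-\theta_2 D(\w^k,\y^k)$ from the opening step to produce the target coefficient $\frac{\overline L_f}{\tau b}-\theta_2$.

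Two cancellations complete the proof: by (\ref{UVbound}), $\|\blambda^{k+1}-\blambda^k\|^2=\theta_1^2\|\U\z^{k+1}\|^2\le\theta_1^2\|\V\z^{k+1}\|^2$ cancels the $-\frac{\theta_1^2}{2\alpha}\|\V\z^{k+1}\|^2$ from the $\V$ three-point identity, and the two $\pm\frac{\mu\theta_1}{2}\|\y^k-\x^*\|^2$ contributions cancel exactly. The residual coefficient of $\|\z^{k+1}-\z^k\|^2$ assembles from $-\frac{\theta_1^2}{2\alpha}$ (from the $\z$ three-point identity), $+\frac{\theta_1^2}{4\alpha}$ (from $\|\V(\z^{k+1}-\z^k)\|^2\le\frac12\|\z^{k+1}-\z^k\|^2$), $+\frac{L_f\theta_1^2}{2}$ (smoothness), and $+\frac{\tau\theta_1^2}{2}$ (Young), equaling $-\bigl(\frac{\theta_1^2}{4\alpha}-\frac{(\tau+L_f)\theta_1^2}{2}\bigr)$, matching (\ref{acc-cont8}). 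The main obstacle is bookkeeping the many quadratic terms across three overlapping three-point identities while tracking signs carefully; the conceptually delicate step is the identity-form convexity expansion that produces the negative $\theta_2 D(\w^k,\y^k)$ residual, without which the variance of $\nabla^k$ cannot be absorbed and the acceleration rate fails.
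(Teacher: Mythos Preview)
Your proposal is correct and follows essentially the same argument as the paper, just with the order of two steps swapped: you perform the three-point convexity decomposition of $f(\y^k)$ \emph{before} introducing $\nabla^k$ and Young's inequality, whereas the paper first applies Young on $\langle\nabla f(\y^k)-\nabla^k,\x^{k+1}-\y^k\rangle$ and only afterwards, in (\ref{acc-cont3}), decomposes $\theta_1\langle\nabla f(\y^k),\x^*-\z^k\rangle$ via (\ref{acc-s1}). Both routes use the same identities $\x^{k+1}-\y^k=\theta_1(\z^{k+1}-\z^k)$ and $\x^{k+1}=\theta_1\z^{k+1}+\theta_2\w^k+(1-\theta_1-\theta_2)\x^k$, the same primal-dual three-point expansion (\ref{acc-cont2}), the same cancellations via (\ref{UVbound}), and the same regrouping that produces the $(\frac{\overline L_f}{\tau b}-\theta_2)D(\w^k,\y^k)$ term; the algebra is identical up to reordering.
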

\begin{proof}
From the $L_f$-smoothness of $f(\x)$, similar to (\ref{non-acc-cont1}), we have
\begin{align}
f(\x^{k+1})\leq& f(\y^k)+\<\nabla f(\y^k),\x^{k+1}-\y^k\>+\frac{L_f}{2}\|\x^{k+1}-\y^k\|^2\notag\\
\leq& f(\y^k)+\frac{1}{2\tau}\|\nabla f(\y^k)-\nabla^k\|^2+\frac{\tau+L_f}{2}\|\x^{k+1}-\y^k\|^2+\<\nabla^k,\x^{k+1}-\y^k\>\notag\\
&\hspace*{-0.25cm}\begin{aligned}\label{acc-cont1}
\overset{a}=& f(\y^k)+\frac{1}{2\tau}\|\nabla f(\y^k)-\nabla^k\|^2+\frac{\tau\theta_1^2+L_f\theta_1^2}{2}\|\z^{k+1}-\z^k\|^2\\
&+\theta_1\<\nabla^k,\z^{k+1}-\z^*\>+\theta_1\<\nabla^k,\z^*-\z^k\>.
\end{aligned}
\end{align}
where we use
\begin{equation}
\x^{k+1}-\y^k=\theta_1(\z^{k+1}-\z^k)\label{acc-cont6}
\end{equation}
in $\overset{a}=$, which comes from (\ref{acc-s5}). Since
\begin{eqnarray}
&&\nabla^k=\frac{\theta_1}{\alpha}(\z^k-\z^{k+1})+\mu(\y^k-\z^{k+1})-\frac{1}{\alpha}\U\blambda^k-\frac{\theta_1}{\alpha}\V^2\z^k\label{acc-cont4},\\
&&\blambda^{k+1}=\blambda^k+\theta_1\U\z^{k+1}\label{acc-lambda},
\end{eqnarray}
from (\ref{acc-s3}) and (\ref{acc-s4}), similar to (\ref{non-acc-cont4}), we have
\begin{eqnarray}
&&\hspace*{-2cm}\theta_1\<\nabla^k,\z^{k+1}-\x^*\>\notag\\
&&\hspace*{-2cm}~=\frac{\theta_1^2}{\alpha}\<\z^k-\z^{k+1},\z^{k+1}-\x^*\>+\mu\theta_1\<\y^k-\z^{k+1},\z^{k+1}-\x^*\>\notag\\
&&\hspace*{-2cm}~\quad-\frac{\theta_1}{\alpha}\<\blambda^k,\U\z^{k+1}\>-\frac{\theta_1^2}{\alpha}\<\V\z^k,\V\z^{k+1}\>\notag\\
&&\hspace*{-2cm}~\overset{b}=\frac{\theta_1^2}{\alpha}\<\z^k-\z^{k+1},\z^{k+1}-\x^*\>+\mu\theta_1\<\y^k-\z^{k+1},\z^{k+1}-\x^*\>\notag\\
&&\hspace*{-2cm}~\quad-\frac{\theta_1}{\alpha}\<\blambda^*,\U\z^{k+1}\>-\frac{1}{\alpha}\<\blambda^k-\blambda^*,\blambda^{k+1}-\blambda^k\>-\frac{\theta_1^2}{\alpha}\<\V\z^k,\V\z^{k+1}\>\notag\\
&&\hspace*{-2cm}~=\frac{\theta_1^2}{2\alpha}\left( \|\z^k-\x^*\|^2-\|\z^{k+1}-\x^*\|^2-\|\z^{k+1}-\z^k\|^2 \right)\notag\\
&&\hspace*{-2cm}~\quad + \frac{\mu\theta_1}{2}\left( \|\y^k-\x^*\|^2-\|\z^{k+1}-\x^*\|^2-\|\z^{k+1}-\y^k\|^2 \right)\notag\\
&&\hspace*{-2cm}~\quad +\frac{1}{2\alpha}\left( \|\blambda^k-\blambda^*\|^2-\|\blambda^{k+1}-\blambda^*\|^2+\|\blambda^{k+1}-\blambda^k\|^2 \right)\notag\\
&&\hspace*{-2cm}~\quad -\frac{\theta_1^2}{2\alpha}\left(\|\V\z^k\|^2+\|\V\z^{k+1}\|^2-\|\V\z^{k+1}-\V\z^k\|^2\right)-\frac{\theta_1}{\alpha}\<\blambda^*,\U\z^{k+1}\>\notag\\
&&\hspace*{-2cm}\begin{aligned}\label{acc-cont2}
~&\overset{c}\leq\frac{\theta_1^2}{2\alpha}\left( \|\z^k-\x^*\|^2-\|\z^{k+1}-\x^*\|^2\right) + \frac{\mu\theta_1}{2}\left( \|\y^k-\x^*\|^2-\|\z^{k+1}-\x^*\|^2 \right)\\
~&\quad+\frac{1}{2\alpha}\left( \|\blambda^k-\blambda^*\|^2-\|\blambda^{k+1}-\blambda^*\|^2 \right)-\frac{\theta_1^2}{2\alpha}\|\V\z^k\|^2-\frac{\theta_1^2}{4\alpha}\|\z^{k+1}-\z^k\|^2\\
~&\quad-\frac{\mu\theta_1}{2}\|\z^{k+1}-\y^k\|^2-\frac{\theta_1}{\alpha}\<\blambda^*,\U\z^{k+1}\>,
\end{aligned}
\end{eqnarray}
where we use (\ref{acc-lambda}) in $\overset{b}=$, $\|\blambda^{k+1}-\blambda^k\|^2=\|\theta_1\U\z^{k+1}\|^2\leq\theta_1^2\|\V\z^{k+1}\|^2$ and $\|\V(\z^{k+1}-\z^k)\|^2\leq\frac{1}{2}\|\z^{k+1}-\z^k\|^2$ in $\overset{c}\leq$. On the other hand, from (\ref{acc-nabla-y}), we have
\begin{eqnarray}
&&\hspace*{-1.5cm}\theta_1\E_{\S^k}\left[\<\nabla^k,\x^*-\z^k\>\right]=\theta_1\<\nabla f(\y^k),\x^*-\z^k\>\notag\\
&&\hspace*{-1.5cm}~\overset{d}=\<\nabla f(\y^k),\theta_1\x^*+\theta_2\w^k+(1-\theta_1-\theta_2)\x^k-\y^k\>\notag\\
&&\hspace*{-1.5cm}~=\theta_1\<\nabla f(\y^k),\x^*-\y^k\>+(1-\theta_1-\theta_2)\<\nabla f(\y^k),\x^k-\y^k\>+\theta_2\<\nabla f(\y^k),\w^k-\y^k\>\notag\\
&&\hspace*{-1.5cm}~\overset{e}\leq\theta_1\hspace*{-0.08cm}\left(\hspace*{-0.08cm}f(\x^*)\hspace*{-0.08cm}-\hspace*{-0.08cm}f(\y^k)\hspace*{-0.08cm}-\hspace*{-0.08cm}\frac{\mu}{2}\|\y^k\hspace*{-0.08cm}-\hspace*{-0.08cm}\x^*\|^2\right)\hspace*{-0.08cm}+\hspace*{-0.08cm}(1\hspace*{-0.08cm}-\hspace*{-0.08cm}\theta_1\hspace*{-0.08cm}-\hspace*{-0.08cm}\theta_2)(f(\x^k)\hspace*{-0.08cm}-\hspace*{-0.08cm}f(\y^k))\hspace*{-0.08cm}+\hspace*{-0.08cm}\theta_2\hspace*{-0.08cm}\<\nabla f(\y^k),\w^k\hspace*{-0.08cm}-\hspace*{-0.08cm}\y^k\>\hspace*{-1cm}\notag\\
&&\hspace*{-1.5cm}\begin{aligned}\label{acc-cont3}
~&= \theta_1f(\x^*)\hspace*{-0.06cm}+\hspace*{-0.06cm}(1\hspace*{-0.06cm}-\hspace*{-0.06cm}\theta_1\hspace*{-0.06cm}-\hspace*{-0.06cm}\theta_2)f(\x^k)\hspace*{-0.06cm}-\hspace*{-0.06cm}(1\hspace*{-0.06cm}-\hspace*{-0.06cm}\theta_2)f(\y^k)\hspace*{-0.06cm}-\hspace*{-0.06cm}\frac{\mu\theta_1}{2}\|\y^k\hspace*{-0.06cm}-\hspace*{-0.06cm}\x^*\|^2\hspace*{-0.06cm}+\hspace*{-0.06cm}\theta_2\<\nabla f(\y^k),\w^k\hspace*{-0.06cm}-\hspace*{-0.06cm}\y^k\>,\hspace*{-1cm}
\end{aligned}
\end{eqnarray}
where we use (\ref{acc-s1}) in $\overset{d}=$, and the strong convexity of $f(\x)$ in $\overset{e}=$. Plugging (\ref{acc-cont2}) and (\ref{acc-cont3}) into (\ref{acc-cont1}), and using (\ref{acc-vr}), we have
\begin{eqnarray}
\begin{aligned}\notag
&\E_{\S^k}\big[f(\x^{k+1})\big]\\
~~&\leq \theta_1f(\x^*)+(1-\theta_1-\theta_2)f(\x^k)+\theta_2f(\y^k)-\frac{\mu\theta_1}{2}\|\y^k-\x^*\|^2+\theta_2\<\nabla f(\y^k),\w^k-\y^k\>\\
~~&\quad+\frac{\overline L_f}{\tau b}\left(f(\w^k)-f(\y^k)-\<\nabla f(\y^k),\w^k-\y^k\>\right)\\
~~&\quad+\frac{\theta_1^2}{2\alpha}\left( \|\z^k-\x^*\|^2-\E_{\S^k}\big[\|\z^{k+1}-\x^*\|^2\big]\right) + \frac{\mu\theta_1}{2}\left( \|\y^k-\x^*\|^2-\E_{\S^k}\big[\|\z^{k+1}-\x^*\|^2\big] \right)\\
~~&\quad+\frac{1}{2\alpha}\left( \|\blambda^k-\blambda^*\|^2-\E_{\S^k}\big[\|\blambda^{k+1}-\blambda^*\|^2\big] \right)-\frac{\theta_1}{\alpha}\E_{\S^k}\left[\<\blambda^*,\U\z^{k+1}\>\right]\\
~~&\quad-\frac{\theta_1^2}{2\alpha}\|\V\z^k\|^2-\left(\frac{\theta_1^2}{4\alpha}-\frac{\tau\theta_1^2+L_f\theta_1^2}{2}\right)\E_{\S^k}\big[\|\z^{k+1}-\z^k\|^2\big]-\frac{\mu\theta_1}{2}\E_{\S^k}\big[\|\z^{k+1}-\y^k\|^2\big]
\end{aligned}
\end{eqnarray}
\begin{eqnarray}
\begin{aligned}\notag
~~&\overset{f}= \theta_1f(\x^*)+(1-\theta_1-\theta_2)f(\x^k)+\theta_2f(\w^k)\\
~~&\quad+\left(\frac{\overline L_f}{\tau b}-\theta_2\right)\left(f(\w^k)-f(\y^k)-\<\nabla f(\y^k),\w^k-\y^k\>\right)\\
~~&\quad+\frac{\theta_1^2}{2\alpha}\|\z^k-\x^*\|^2-\left(\frac{\theta_1^2}{2\alpha}+\frac{\mu\theta_1}{2}\right)\E_{\S^k}\big[\|\z^{k+1}-\x^*\|^2\big]\\
~~&\quad+\frac{1}{2\alpha}\left( \|\blambda^k-\blambda^*\|^2-\E_{\S^k}\big[\|\blambda^{k+1}-\blambda^*\|^2\big] \right)\\
~~&\quad-\frac{1}{\alpha}\E_{\S^k}\left[\<\blambda^*,\U\x^{k+1}-\theta_2\U\w^k-(1-\theta_1-\theta_2)\U\x^k\>\right]\\
~~&\quad-\frac{\theta_1^2}{2\alpha}\|\V\z^k\|^2-\left(\frac{\theta_1^2}{4\alpha}-\frac{\tau\theta_1^2+L_f\theta_1^2}{2}\right)\E_{\S^k}\big[\|\z^{k+1}-\z^k\|^2\big]-\frac{\mu\theta_1}{2}\E_{\S^k}\big[\|\z^{k+1}-\y^k\|^2\big],
\end{aligned}
\end{eqnarray}
where we use (\ref{acc-s1}) and (\ref{acc-s5}) in $\overset{f}=$. Rearranging the terms, we have the conclusion.
\end{proof}

Similar to Lemma \ref{non-acc-lemma2}, we establish the smaller constant before $\|\blambda^k-\blambda^*\|^2$ than that before $\|\blambda^{k+1}-\blambda^*\|^2$ in the next lemma.
\begin{lemma}
Suppose that Assumption \ref{assumption_f} and conditions (\ref{UV-define}) and (\ref{UVbound}) hold. Choose $b$ such that $\theta_2=\frac{\overline L_f}{2L_fb}\leq\frac{1}{2}$. Let $\theta_1\leq\frac{1}{2}$, $\alpha=\frac{1}{10L_f}$, and $\blambda^0=0$. Then  for algorithm (\ref{acc-s1})-(\ref{acc-s6}), we have
\begin{eqnarray}
\begin{aligned}\label{acc-cont9}
&\left(1-\frac{\theta_1}{2}\right)\E_{\S^k}\left[f(\x^{k+1})-f(\x^*)+\frac{1}{\alpha}\<\blambda^*,\U\x^{k+1}\>\right]\\
~~&\leq(1-\theta_1-\theta_2)\left(f(\x^k)-f(\x^*)+\frac{1}{\alpha}\<\blambda^*,\U\x^k\>\right)+\theta_2\left(f(\w^k)-f(\x^*)+\frac{1}{\alpha}\<\blambda^*,\U\w^k\>\right)\hspace*{-1cm}\\
~~&\quad+\frac{\theta_1^2}{2\alpha}\|\z^k-\x^*\|^2-\left(\frac{\theta_1^2}{2\alpha}+\frac{\mu\theta_1}{2}\right)\E_{\S^k}\big[\|\z^{k+1}-\x^*\|^2\big]\\
~~&\quad+\left(\frac{1}{2\alpha}-\frac{(1-\nu)\theta_1}{4\kappa L_f\alpha^2}\right)\|\blambda^k-\blambda^*\|^2-\frac{1}{2\alpha}\E_{\S^k}\big[\|\blambda^{k+1}-\blambda^*\|^2\big],
\end{aligned}
\end{eqnarray}
with $\nu=\frac{127}{128}$.
\end{lemma}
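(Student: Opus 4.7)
My plan is to mirror the structure of the proof of Lemma \ref{non-acc-lemma2}: derive a ``progress lemma'' on $f(\x^{k+1})-f(\x^*)+\frac{1}{\alpha}\langle\blambda^*,\U\x^{k+1}\rangle$ that exposes a negative multiple of $\|\blambda^k-\blambda^*\|^2$, then add a fraction of it to (\ref{acc-cont8}) so that the coefficient on $\|\blambda^k-\blambda^*\|^2$ on the right side is strictly smaller than $\frac{1}{2\alpha}$. The one new ingredient compared with the non-accelerated case is the $\mu(\y^k-\z^{k+1})$ term in (\ref{acc-cont4}), which only helps: it produces a negative $\|\z^{k+1}-\y^k\|^2$ term in (\ref{acc-cont8}) that is available to absorb the corresponding cross-term generated below.

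First I would invoke the smoothness characterization (\ref{smooth-cond}) at $\x^{k+1}$ versus $\x^*$, combined with the KKT relation (\ref{opt-cond}), to obtain
\[
f(\x^{k+1})-f(\x^*)+\tfrac{1}{\alpha}\langle\blambda^*,\U\x^{k+1}\rangle\;\geq\;\tfrac{1}{2L_f\alpha^2}\bigl\|\alpha\nabla f(\x^{k+1})+\U\blambda^*\bigr\|^2.
\]
Then I would rewrite $\alpha\nabla f(\x^{k+1})+\U\blambda^*$ by adding and subtracting $\alpha\nabla f(\y^k)$ and $\alpha\nabla^k$ and substituting (\ref{acc-cont4}):
\[
\alpha\nabla f(\x^{k+1})+\U\blambda^*=-\U(\blambda^k-\blambda^*)+\bigl[\theta_1(\z^k-\z^{k+1})+\alpha\mu(\y^k-\z^{k+1})-\theta_1\V^2\z^k+\alpha(\nabla^k-\nabla f(\y^k))+\alpha(\nabla f(\x^{k+1})-\nabla f(\y^k))\bigr].
\]
Applying $\|a-b\|^2\geq(1-\nu)\|a\|^2-(\nu^{-1}-1)\|b\|^2$ and then the spectral bound $\|\U\blambda\|^2\geq\frac{1}{\kappa}\|\blambda\|^2$ from (\ref{UVbound}), I get a lower bound of the form $\frac{1-\nu}{2\kappa L_f\alpha^2}\|\blambda^k-\blambda^*\|^2$ minus a weighted sum of the five bracketed terms.

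Next I would dispose of each of those five terms using the standard bound $\|\sum_{i=1}^5 a_i\|^2\leq 5\sum_{i=1}^5\|a_i\|^2$: the first gives $\theta_1^2\|\z^{k+1}-\z^k\|^2$, the second $\alpha^2\mu^2\|\z^{k+1}-\y^k\|^2$, the third $\theta_1^2\|\V\z^k\|^2$ (using $\|\V^2\z^k\|^2\leq\|\V\z^k\|^2$), the fourth is controlled in expectation by (\ref{acc-vr}) (yielding a multiple of the variance-reduction slack $f(\w^k)-f(\y^k)-\langle\nabla f(\y^k),\w^k-\y^k\rangle$), and the fifth by $L_f$-smoothness combined with (\ref{acc-cont6}) gives another multiple of $\theta_1^2\|\z^{k+1}-\z^k\|^2$. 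Multiplying the resulting inequality by $\theta_1/2$ and adding to (\ref{acc-cont8}), I obtain the desired coefficient $1-\theta_1/2$ on the left, the improved coefficient $\frac{1}{2\alpha}-\frac{(1-\nu)\theta_1}{4\kappa L_f\alpha^2}$ on $\|\blambda^k-\blambda^*\|^2$, and five ``debt'' terms on $\|\V\z^k\|^2$, $\|\z^{k+1}-\z^k\|^2$, $\|\z^{k+1}-\y^k\|^2$, and $f(\w^k)-f(\y^k)-\langle\nabla f(\y^k),\w^k-\y^k\rangle$ that need to be absorbed.

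The main obstacle, as in Lemma \ref{non-acc-lemma2}, is the parameter balancing. With $\alpha=\frac{1}{10L_f}$, $\theta_1\leq\frac{1}{2}$, $\theta_2\leq\frac{1}{2}$, and $\nu=\frac{127}{128}$, I would verify that (i) the $-\frac{\theta_1^2}{2\alpha}\|\V\z^k\|^2$ term in (\ref{acc-cont8}) dominates its counterpart from above, (ii) the $-(\frac{\theta_1^2}{4\alpha}-\frac{\tau\theta_1^2+L_f\theta_1^2}{2})\|\z^{k+1}-\z^k\|^2$ term dominates after choosing an appropriate auxiliary constant $\tau$ (of order $L_f$), (iii) the $-\frac{\mu\theta_1}{2}\|\z^{k+1}-\y^k\|^2$ term dominates the $\alpha^2\mu^2\theta_1$-sized debt (which is comfortably small since $\alpha\mu\leq 1$), and (iv) the coefficient on $f(\w^k)-f(\y^k)-\langle\nabla f(\y^k),\w^k-\y^k\rangle$ stays nonpositive, which needs $\frac{\overline L_f}{\tau b}\leq\theta_2=\frac{\overline L_f}{2L_f b}$, i.e.\ $\tau\geq 2L_f$; together with $(\nu^{-1}-1)$ being tiny, this is consistent with the choice above. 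The bookkeeping is tedious but each constant inequality reduces to a numerical check with $\alpha L_f=\frac{1}{10}$ and $\nu^{-1}-1=\frac{1}{127}$.
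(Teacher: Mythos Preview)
Your proposal is correct and follows essentially the same route as the paper: lower-bound $f(\x^{k+1})-f(\x^*)+\frac{1}{\alpha}\langle\blambda^*,\U\x^{k+1}\rangle$ via (\ref{smooth-cond})--(\ref{opt-cond}), substitute (\ref{acc-cont4}), apply the Young-type splitting $\|a-b\|^2\geq(1-\nu)\|a\|^2-(\nu^{-1}-1)\|b\|^2$ and the five-term Cauchy bound, multiply by $\theta_1/2$, add to (\ref{acc-cont8}), and close with the numerical checks using $\tau=3L_f$, $\alpha=\frac{1}{10L_f}$, $\nu=\frac{127}{128}$. (One cosmetic slip: in your displayed identity the sign on $\alpha(\nabla^k-\nabla f(\y^k))$ is reversed, but since only the squared norm enters this does not affect any of the subsequent estimates.)
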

\begin{proof}
From (\ref{smooth-cond}) and (\ref{opt-cond}), similar to (\ref{non-acc-cont6}), we have
\begin{eqnarray}
&&\hspace*{-1.5cm}f(\x^{k+1})-f(\x^*)+\frac{1}{\alpha}\<\lambda^*,\U\x^{k+1}\>\geq\frac{1}{2L_f\alpha^2}\|\alpha\nabla f(\x^{k+1})+\U\lambda^*\|^2\notag\\
&&\hspace*{-1.5cm}~\overset{a}=\frac{1}{2L_f\alpha^2}\left\| \alpha\mu(\z^{k+1}-\y^k)+\theta_1(\z^{k+1}-\z^k)+\U(\blambda^k-\blambda^*)+\theta_1\V^2\z^k\right.\notag\\
&&\hspace*{-1.5cm}~\quad\hspace{1.35cm}\left.+\alpha\nabla^k-\alpha\nabla f(\y^k)+\alpha\nabla f(\y^k)-\alpha\nabla f(\x^{k+1}) \right\|^2\notag\\
&&\hspace*{-1.5cm}~\geq\frac{1-\nu}{2L_f\alpha^2}\|\U(\blambda^k-\blambda^*)\|^2-\frac{1}{2L_f\alpha^2}\left(\frac{1}{\nu}-1\right)\left\| \alpha\mu(\z^{k+1}-\y^k)+\theta_1(\z^{k+1}-\z^k)\right.\notag\\
&&\hspace*{-1.5cm}~\quad\hspace{2.8cm}\left.+\theta_1\V^2\z^k+\alpha\nabla^k-\alpha\nabla f(\y^k)+\alpha\nabla f(\y^k)-\alpha\nabla f(\x^{k+1}) \right\|^2\notag\\
&&\hspace*{-1.5cm}\begin{aligned}\label{acc-cont7}
~&\overset{b}\geq\frac{1-\nu}{2\kappa L_f\alpha^2}\|\blambda^k-\blambda^*\|^2-\frac{5\mu^2}{2L_f}\left(\frac{1}{\nu}-1\right)\|\z^{k+1}-\y^k\|^2-\frac{5\theta_1^2}{2L_f\alpha^2}\left(\frac{1}{\nu}-1\right)\|\V\z^k\|^2\\
~&\quad-\frac{5}{2L_f}\left(\frac{1}{\nu}-1\right)\|\nabla^k-\nabla f(\y^k)\|^2-\left(\frac{5\theta_1^2}{2L_f\alpha^2}+\frac{5L_f\theta_1^2}{2}\right)\left(\frac{1}{\nu}-1\right)\|\z^{k+1}-\z^k\|^2,
\end{aligned}
\end{eqnarray}
where we use (\ref{acc-cont4}) in $\overset{a}=$, (\ref{UVbound}), the $L_f$-smoothness of $f(\x)$, (\ref{acc-cont6}), and $\|\V^2\z^k\|^2\leq \|\V\z^k\|^2$ in $\overset{b}\geq$. Multiplying both sides of (\ref{acc-cont7}) by $\frac{\theta_1}{2}$ and plugging it into (\ref{acc-cont8}), using (\ref{acc-vr}), we have
\begin{eqnarray}
\hspace*{-1.6cm}\begin{aligned}\notag
&\left(1-\frac{\theta_1}{2}\right)\E_{\S^k}\left[f(\x^{k+1})-f(\x^*)+\frac{1}{\alpha}\<\blambda^*,\U\x^{k+1}\>\right]\\
~~&\leq(1-\theta_1-\theta_2)\left(f(\x^k)-f(\x^*)+\frac{1}{\alpha}\<\blambda^*,\U\x^k\>\right)\\
~~&\quad+\theta_2\left(f(\w^k)-f(\x^*)+\frac{1}{\alpha}\<\blambda^*,\U\w^k\>\right)\\
~~&\quad+\left(\frac{\overline L_f}{\tau b}+\frac{5\overline L_f\theta_1}{2bL_f}\left(\frac{1}{\nu}-1\right)-\theta_2\right)\left(f(\w^k)-f(\y^k)-\<\nabla f(\y^k),\w^k-\y^k\>\right)\\
~~&\quad+\frac{\theta_1^2}{2\alpha}\|\z^k-\x^*\|^2-\left(\frac{\theta_1^2}{2\alpha}+\frac{\mu\theta_1}{2}\right)\E_{\S^k}\big[\|\z^{k+1}-\x^*\|^2\big]\\
~~&\quad+\left(\frac{1}{2\alpha}-\frac{(1-\nu)\theta_1}{4\kappa L_f\alpha^2}\right)\|\blambda^k-\blambda^*\|^2-\frac{1}{2\alpha}\E_{\S^k}\big[\|\blambda^{k+1}-\blambda^*\|^2\big]\\
~~&\quad-\left(\frac{\theta_1^2}{2\alpha}-\frac{5\theta_1^3}{4L_f\alpha^2}\left(\frac{1}{\nu}-1\right)\right)\|\V\z^k\|^2-\left(\frac{\mu\theta_1}{2}-\frac{5\mu^2\theta_1}{4L_f}\left(\frac{1}{\nu}-1\right)\right)\E_{\S^k}\big[\|\z^{k+1}-\y^k\|^2\big]\\
~~&\quad-\left(\frac{\theta_1^2}{4\alpha}-\frac{\tau\theta_1^2+L_f\theta_1^2}{2}-\left(\frac{5\theta_1^3}{4L_f\alpha^2}+\frac{5L_f\theta_1^3}{4}\right)\left(\frac{1}{\nu}-1\right)\right)\E_{\S^k}\big[\|\z^{k+1}-\z^k\|^2\big].
\end{aligned}
\end{eqnarray}
Letting $\theta_1\leq \frac{1}{2}$, $\theta_2=\frac{\overline L_f}{2L_fb}$, $\tau=3L_f$, $\nu=\frac{127}{128}$, and $\alpha=\frac{1}{10L_f}$ such that $\frac{\overline L_f}{\tau b}+\frac{5\overline L_f\theta_1}{2L_fb}(\frac{1}{\nu}-1)-\theta_2\leq 0$, $\frac{\theta_1^2}{2\alpha}-\frac{5\theta_1^3}{4L_f\alpha^2}(\frac{1}{\nu}-1)\geq 0$, $\frac{\mu\theta_1}{2}-\frac{5\mu^2\theta_1}{4L_f}(\frac{1}{\nu}-1)\geq 0$, and $\frac{\theta_1^2}{4\alpha}-\frac{\tau\theta_1^2+L_f\theta_1^2}{2}-(\frac{5\theta_1^3}{4L_f\alpha^2}+\frac{5L_f\theta_1^3}{4})(\frac{1}{\nu}-1)\geq 0$, we have the conclusion.
\end{proof}

Now, we are ready to prove Theorem \ref{acc-theorem}.
\begin{proof}
Let $b\geq\max\{\frac{\max\{\sqrt{n\overline L_f/\mu},n\}}{\max\{\sqrt{\kappa L_f/\mu},\kappa\}},\frac{\overline L_f}{L_f}\}$, then we know $\theta_2=\frac{\overline L_f}{2L_fb}\leq\frac{1}{2}$ and $b\geq 1$, where we use $\overline L_f\geq L_f$. We also have $\max\{\frac{\max\{\sqrt{n\overline L_f/\mu},n\}}{\max\{\sqrt{\kappa L_f/\mu},\kappa\}},\frac{\overline L_f}{L_f}\}\leq\max\{\max\{\sqrt{\frac{n\overline L_f}{\mu}},n\}\sqrt{\frac{\mu}{\kappa L_f}},\frac{\overline L_f}{L_f}\}=\max\{\sqrt{\frac{n\overline L_f}{\kappa L_f}},\sqrt{\frac{n^2\mu}{\kappa L_f}},\frac{\overline L_f}{L_f}\}\overset{a}\leq n$, where $\overset{a}\leq$ uses $\kappa\geq 1$ and $\mu\leq L_f\leq\overline L_f\leq nL_f$ given in (\ref{L-relation}). This verifies that the setting of $b$ is meaningful.

Multiplying both sides of (\ref{non-acc-cont3}) by $\frac{\theta_2}{\frac{b}{n}-\frac{\theta_1}{20\kappa}}$ and adding it to (\ref{acc-cont9}), we have
\begin{eqnarray}
\hspace*{-1.5cm}\begin{aligned}\label{acc-cont10}
&\left(1-\frac{\theta_1}{2}\right)\E_{\S^k}\left[f(\x^{k+1})-f(\x^*)+\frac{1}{\alpha}\<\blambda^*,\U\x^{k+1}\>\right]\\
~~&\quad+\frac{\theta_2}{\frac{b}{n}-\frac{\theta_1}{20\kappa}}\E_{\w^{k+1}}\left[f(\w^{k+1})-f(\x^*)+\frac{1}{\alpha}\<\blambda^*,\U\w^{k+1}\>\right]\\
~~&\leq\left(1-\theta_1-\theta_2+\frac{b}{n}\frac{\theta_2}{\frac{b}{n}-\frac{\theta_1}{20\kappa}}\right)\left(f(\x^k)-f(\x^*)+\frac{1}{\alpha}\<\blambda^*,\U\x^k\>\right)\\
~~&\quad+\left(\theta_2+\left(1-\frac{b}{n}\right)\frac{\theta_2}{\frac{b}{n}-\frac{\theta_1}{20\kappa}}\right)\left(f(\w^k)-f(\x^*)+\frac{1}{\alpha}\<\blambda^*,\U\w^k\>\right)\\
~~&\quad+\frac{\theta_1^2}{2\alpha}\|\z^k-\x^*\|^2-\left(\frac{\theta_1^2}{2\alpha}+\frac{\mu\theta_1}{2}\right)\E_{\S^k}\big[\|\z^{k+1}-\x^*\|^2\big]\\
~~&\quad+\left(\frac{1}{2\alpha}-\frac{(1-\nu)\theta_1}{4\kappa L_f\alpha^2}\right)\|\blambda^k-\blambda^*\|^2-\frac{1}{2\alpha}\E_{\S^k}\big[\|\blambda^{k+1}-\blambda^*\|^2\big].
\end{aligned}
\end{eqnarray}
We can easily check that
\begin{eqnarray}
\begin{aligned}\notag
&1-\theta_1-\theta_2+\frac{b}{n}\frac{\theta_2}{\frac{b}{n}-\frac{\theta_1}{20\kappa}}=1-\theta_1-\theta_2+\frac{\theta_2}{1-\frac{n\theta_1}{20b\kappa}}\\
~~&\quad=1-\theta_1+\frac{\frac{n\theta_1\theta_2}{20b\kappa}}{1-\frac{n\theta_1}{20b\kappa}}\overset{b}\leq1-\theta_1+\frac{\theta_1}{39}=1-\frac{38}{39}\theta_1,
\end{aligned}
\end{eqnarray}
and
\begin{eqnarray}
\begin{aligned}\notag
\theta_2+\left(1-\frac{b}{n}\right)\frac{\theta_2}{\frac{b}{n}-\frac{\theta_1}{20\kappa}}&=\theta_2+\left(\frac{\theta_1}{20\kappa}-\frac{b}{n}\right)\frac{\theta_2}{\frac{b}{n}-\frac{\theta_1}{20\kappa}}+\left(1-\frac{\theta_1}{20\kappa}\right)\frac{\theta_2}{\frac{b}{n}-\frac{\theta_1}{20\kappa}}\\
&=\left(1-\frac{\theta_1}{20\kappa}\right)\frac{\theta_2}{\frac{b}{n}-\frac{\theta_1}{20\kappa}},
\end{aligned}
\end{eqnarray}
where we check $\overset{b}\leq$ in the following two cases. In the first case, if $\kappa\leq\frac{L_f}{\mu}$, we have $\theta_1=\frac{1}{2}\sqrt{\frac{\kappa\mu}{L_f}}$ and $b\geq\max\{\sqrt{\frac{n\overline L_f}{\kappa L_f}},\sqrt{\frac{n^2\mu}{\kappa L_f}},\frac{\overline L_f}{L_f}\}$. So we have $\frac{n\theta_2}{20b\kappa}\overset{c}=\frac{n\overline L_f}{40L_fb^2\kappa}\leq\frac{n\overline L_f}{40L_f\kappa}\frac{\kappa L_f}{n\overline L_f}=\frac{1}{40}$ and $\frac{n\theta_1}{20b\kappa}=\frac{n}{40b\kappa}\sqrt{\frac{\kappa\mu}{L_f}}\leq\frac{n}{40\kappa}\sqrt{\frac{\kappa L_f}{n^2\mu}}\sqrt{\frac{\kappa\mu}{L_f}}=\frac{1}{40}$, where $\overset{c}=$ uses the setting of $\theta_2$. So we get $\overset{b}\leq$. In the second case, if $\kappa\geq\frac{L_f}{\mu}$, we have $\theta_1=\frac{1}{2}$ and $b\geq\max\{\sqrt{\frac{n\overline L_f}{\mu}}\frac{1}{\kappa},\frac{n}{\kappa},\frac{\overline L_f}{L_f}\}\geq \frac{n}{\kappa}$. So we have $\frac{n\theta_2}{20b\kappa}\overset{d}\leq\frac{n}{40b\kappa}\leq\frac{1}{40}$ and $\frac{n\theta_1}{20b\kappa}=\frac{n}{40b\kappa}\leq\frac{1}{40}$, where $\overset{d}\leq$ uses $\theta_2\leq\frac{1}{2}$ derived at the beginning of this proof. So we also get $\overset{b}\leq$.

Taking expectation with respect to $\xi^k$ on both sides of (\ref{acc-cont10}) and rearranging the terms, we have
\begin{eqnarray}
\begin{aligned}\notag
&\left(1-\frac{\theta_1}{2}\right)\E_{\xi^{k+1}}\left[f(\x^{k+1})-f(\x^*)+\frac{1}{\alpha}\<\blambda^*,\U\x^{k+1}\>\right]\\
~~&\quad+\frac{\theta_2}{\frac{b}{n}-\frac{\theta_1}{20\kappa}}\E_{\xi^{k+1}}\left[f(\w^{k+1})-f(\x^*)+\frac{1}{\alpha}\<\blambda^*,\U\w^{k+1}\>\right]\\
~~&\quad+\left(\frac{\theta_1^2}{2\alpha}+\frac{\mu\theta_1}{2}\right)\E_{\xi^{k+1}}\big[\|\z^{k+1}-\x^*\|^2\big]+\frac{1}{2\alpha}\E_{\xi^{k+1}}\big[\|\blambda^{k+1}-\blambda^*\|^2\big]\\
~~&\leq\left(1-\frac{38}{39}\theta_1\right)\E_{\xi^k}\left[f(\x^k)-f(\x^*)+\frac{1}{\alpha}\<\blambda^*,\U\x^k\>\right]\\
~~&\quad+\frac{\theta_2}{\frac{b}{n}-\frac{\theta_1}{20\kappa}}\left(1-\frac{\theta_1}{20\kappa}\right)\E_{\xi^k}\left[f(\w^k)-f(\x^*)+\frac{1}{\alpha}\<\blambda^*,\U\w^k\>\right]\\
~~&\quad+\frac{\theta_1^2}{2\alpha}\E_{\xi^k}\big[\|\z^k-\x^*\|^2\big]+\left(\frac{1}{2\alpha}-\frac{(1-\nu)\theta_1}{4\kappa L_f\alpha^2}\right)\E_{\xi^k}\big[\|\blambda^k-\blambda^*\|^2\big]\\
~~&\overset{f}\leq\left\{\left(1-\frac{\theta_1}{2}\right)\E_{\xi^k}\left[f(\x^k)-f(\x^*)+\frac{1}{\alpha}\<\blambda^*,\U\x^k\>\right]\right.\\
~~&\qquad\left.+\frac{\theta_2}{\frac{b}{n}-\frac{\theta_1}{20\kappa}}\E_{\xi^k}\left[f(\w^k)-f(\x^*)+\frac{1}{\alpha}\<\blambda^*,\U\w^k\>\right]\right.\\
~~&\qquad\left.+\left(\frac{\theta_1^2}{2\alpha}+\frac{\mu\theta_1}{2}\right)\E_{\xi^k}\big[\|\z^k-\x^*\|^2\big]+\frac{1}{2\alpha}\E_{\xi^k}\big[\|\blambda^k-\blambda^*\|^2\big]\right\}\\
~~&\quad\times \max\left\{\frac{1-\frac{38}{39}\theta_1}{1-\frac{\theta_1}{2}},1-\frac{\theta_1}{20\kappa},\frac{1}{1+\frac{\mu\alpha}{\theta_1}},1-\frac{(1-\nu)\theta_1}{2\kappa L_f\alpha}\right\},
\end{aligned}
\end{eqnarray}
where $\overset{f}\leq$ uses the fact that $f(\x)+\frac{1}{\alpha}\<\blambda^*,\U\x\>\geq f(\x^*)+\frac{1}{\alpha}\<\blambda^*,\U\x^*\>$ for any $\x$, and $\frac{n\theta_1}{20b\kappa}\leq\frac{1}{40}$ in the above analysis such that $\frac{b}{n}\geq\frac{\theta_1}{20\kappa}$.

From the settings of $\theta_1$ and $\alpha$ and $\kappa\geq 1$, we can easily check $\frac{1-\frac{38}{39}\theta_1}{1-\frac{\theta_1}{2}}\leq 1-\frac{18}{39}\theta_1\leq 1-\frac{18}{39}\frac{\theta_1}{\kappa}=\bO(1-\frac{\theta_1}{\kappa})$, $\frac{1}{1+\frac{\mu\alpha}{\theta_1}}\leq 1-\frac{\mu\alpha}{2\theta_1}=\bO(1-\frac{\mu}{L_f\theta_1})$ due to $\mu\alpha=\frac{\mu}{10L_f}\leq\frac{1}{10}\sqrt{\frac{\mu}{L_f}}\leq\theta_1$, and $1-\frac{(1-\nu)\theta_1}{2\kappa L_f\alpha}=\bO(1-\frac{\theta_1}{\kappa})$. Thus the algorithm needs $\bO((\frac{\kappa}{\theta_1}+\frac{L_f\theta_1}{\mu})\log\frac{1}{\epsilon})$ iterations to find $\z^k$ such that $\E_{\xi^k}\big[\|\z^k-\x^*\|^2\big]\leq\epsilon$.

We first consider the communication complexity.
\begin{enumerate}
\item If $\kappa\leq\frac{L_f}{\mu}$, we have $\theta_1=\frac{1}{2}\sqrt{\frac{\kappa\mu}{L_f}}$ and $\bO((\frac{\kappa}{\theta_1}+\frac{L_f\theta_1}{\mu})\log\frac{1}{\epsilon})=\bO(\sqrt{\frac{\kappa L_f}{\mu}}\log\frac{1}{\epsilon})$. So the communication complexity is $\bO(\sqrt{\frac{\kappa L_f}{\mu}}\log\frac{1}{\epsilon})$.
\item If $\kappa\geq\frac{L_f}{\mu}$, we have $\theta_1=\frac{1}{2}$ and $\bO((\frac{\kappa}{\theta_1}+\frac{L_f\theta_1}{\mu})\log\frac{1}{\epsilon})=\bO((\kappa+\frac{L_f}{\mu})\log\frac{1}{\epsilon})=\bO(\kappa\log\frac{1}{\epsilon})$. So the communication complexity is $\bO(\kappa\log\frac{1}{\epsilon})$.
\end{enumerate}
So the algorithm needs the time of $\bO(\max\{\sqrt{\frac{\kappa L_f}{\mu}},\kappa\}\log\frac{1}{\epsilon})$ communication rounds to find an $\epsilon$-optimal solution $\z^k$ such that $\E_{\xi^k}\big[\|\z^k-\x^*\|^2\big]\leq\epsilon$.

Next, we consider the stochastic gradient computation complexity.
\begin{enumerate}
\item If $\max\{\sqrt{\frac{n\overline L_f}{\mu}},n\}\frac{L_f}{\overline L_f}\geq \max\{\sqrt{\frac{\kappa L_f}{\mu}},\kappa\}$ such that $b=\frac{\max\{\sqrt{n\overline L_f/\mu},n\}}{\max\{\sqrt{\kappa L_f/\mu},\kappa\}}$, the stochastic gradient computation complexity is $\bO(b\max\{\sqrt{\frac{\kappa L_f}{\mu}},\kappa\}\log\frac{1}{\epsilon})=\bO(\max\{\sqrt{\frac{n\overline L_f}{\mu}},n\}\log\frac{1}{\epsilon})$.

\item If $\max\{\sqrt{\frac{n\overline L_f}{\mu}},n\}\frac{L_f}{\overline L_f}\leq \max\{\sqrt{\frac{\kappa L_f}{\mu}},\kappa\}$ such that $b=\frac{\overline L_f}{L_f}$, the stochastic gradient computation complexity is $\bO(b\max\{\sqrt{\frac{\kappa L_f}{\mu}},\kappa\}\log\frac{1}{\epsilon})=\bO(\frac{\overline L_f}{L_f}\max\{\sqrt{\frac{\kappa L_f}{\mu}},\kappa\}\log\frac{1}{\epsilon})$.
\item If we choose $b>\max\{\frac{\max\{\sqrt{n\overline L_f/\mu},n\}}{\max\{\sqrt{\kappa L_f/\mu},\kappa\}},\frac{\overline L_f}{L_f}\}$, the stochastic gradient computation complexity is higher than the above ones. But the communication complexity remains unchanged. This verifies Remark \ref{acc-remark}(2).
\end{enumerate}
At last, we discuss the condition $\max\{\sqrt{\frac{n\overline L_f}{\mu}},n\}\frac{L_f}{\overline L_f}\geq \max\{\sqrt{\frac{\kappa L_f}{\mu}},\kappa\}$. We know that $\phi(\kappa)\equiv\max\{\sqrt{\frac{\kappa L_f}{\mu}},\kappa\}$ is a piece-wise increasing function with respect to $\kappa$ such that $\phi(\kappa)=\left\{
  \begin{array}{cl}
    \sqrt{\frac{\kappa L_f}{\mu}}, & \mbox{if }0\leq\kappa\leq\frac{L_f}{\mu},\\
    \kappa, & \mbox{if }\kappa\geq\frac{L_f}{\mu},
  \end{array}
\right.$ and $\phi(\kappa)\left\{
  \begin{array}{cl}
    \leq \frac{L_f}{\mu}, & \mbox{if }\phi(\kappa)=\sqrt{\frac{\kappa L_f}{\mu}},\\
    \geq \frac{L_f}{\mu}, & \mbox{if }\phi(\kappa)=\kappa.
  \end{array}
\right.$

\begin{enumerate}
\item If $n\geq\frac{\overline L_f}{\mu}$, we have $\max\{\sqrt{\frac{n\overline L_f}{\mu}},n\}\frac{L_f}{\overline L_f}=\frac{nL_f}{\overline L_f}\geq\frac{L_f}{\mu}$. So the condition $\max\{\sqrt{\frac{n\overline L_f}{\mu}},n\}\frac{L_f}{\overline L_f}\geq \max\{\sqrt{\frac{\kappa L_f}{\mu}},\kappa\}$ is equivalent to $\frac{nL_f}{\overline L_f}\geq\kappa$.
\item If $n\leq\frac{\overline L_f}{\mu}$, we have $\max\{\sqrt{\frac{n\overline L_f}{\mu}},n\}\frac{L_f}{\overline L_f}=\sqrt{\frac{nL_f^2}{\mu\overline L_f}}\leq\frac{L_f}{\mu}$. So $\max\{\sqrt{\frac{n\overline L_f}{\mu}},n\}\frac{L_f}{\overline L_f}\geq \max\{\sqrt{\frac{\kappa L_f}{\mu}},\kappa\}$ is equivalent to $\sqrt{\frac{nL_f^2}{\mu\overline L_f}}\geq\sqrt{\frac{\kappa L_f}{\mu}}$, that is, $\frac{nL_f}{\overline L_f}\geq\kappa$.
\end{enumerate}
\vspace*{-0.7cm}
\end{proof}

\section{Numerical Experiments}\label{sec:exp}
Consider the following decentralized regularized logistic regression problem:
\begin{eqnarray}\notag
\min_{x\in\R^p} \sum_{i=1}^m f_{(i)}(x),\quad\mbox{where}\quad f_{(i)}(x)=\frac{\mu}{2}\|x\|^2+\frac{1}{n}\sum_{j=1}^n \log\left(1+\exp(-y_{(i),j}\A_{(i),j}^Tx)\right),
\end{eqnarray}
where the pairs $(\A_{(i),j},y_{(i),j})\in\R^p\times\{1,-1\}$ are taken from the RCV1 dataset\footnote{https://www.csie.ntu.edu.tw/~cjlin/libsvmtools/datasets/binary.html} with $p=47236$, $m=49$, and $n=500$. Denote $\A_{(i)}=[\A_{(i),1},\A_{(i),2},\cdots,\A_{(i),n}]\in\R^{p\times n}$ as the data matrix on the $i$th node. For this special problem and dataset, we observe $L_f=\max_i \frac{\|\A_{(i)}\|_2^2}{4n}+\mu\approx 0.016+\mu$ and $\overline L_f=\max_i \frac{\|\A_{(i)}\|_F^2}{4n}+\mu=\frac{1}{4}+\mu$, respectively. We test the performance of the proposed algorithms on different ratios between $\kappa_s$ and $n$. Specifically, we test on $\mu=5\times 10^{-5}$, $\mu=5\times 10^{-6}$, and $\mu=5\times 10^{-7}$, which correspond to $\kappa_s=\frac{\overline L_f}{\mu}\approx 5\times 10^3$, $\kappa_s\approx 5\times 10^4$, and $\kappa_s\approx 5\times 10^5$, respectively. Note that $n=500$. We also observe $\frac{n\kappa_b}{\kappa_s}\approx 31.9$.

We test the performance on two kinds of networks: the Erd\H{o}s$-$R\'{e}nyi random graph and the two-dimensional grid graph, where each pair of nodes has a connection with the ratio of $0.2$ for the first graph, and $m$ nodes are placed in the $\sqrt{m}\times \sqrt{m}$ grid and each node is connected with its neighbors around it for the second graph. Theoretically, $\kappa_c=\bO(1)$ for the first graph, and $\kappa_c=\bO(m\log m)$ for the second graph. Practically, we observe $\kappa_c=4.62$ and $\kappa_c=19.9$ for the two graphs, respectively. We set the weight matrix as $\W=\frac{M-\lambda_{\min}\I}{1-\lambda_{\min}}$ for both graphs, where $M$ is the Metropolis weight
matrix \citep{boyd2004} with $M_{ij}=\left\{
  \begin{array}{ll}
    \frac{1}{\max\{d(i),d(j)\}} & \mbox{if }(i,j)\in\mathcal{E},\\
    1-\sum_{l\in\N_{(i)}}M_{il} & \mbox{if } i=j,\\
    0 & \mbox{if } (i,j)\notin\mathcal{E}\mbox{ and }i\neq j.
  \end{array}
\right.$, $d(i)$ is the degree of node $i$, and $\lambda_{\min}<0$ is the smallest negative eigenvalue of $M$.

We compare the proposed VR-EXTRA, Acc-VR-EXTRA, Acc-VR-EXTRA-CA, VR-DIGing, Acc-VR-DIGing, and Acc-VR-DIGing-CA with EXTRA \citep{shi2015extra}, DIGing \citep{shi2017}, GT-SVRG \citep{ranxin2020}, APAPC \citep{richtaric-2020-dist}, and accelerated DVR (Acc-DVR) \citep{DVR}. For the case of $\mu=5\times 10^{-6}$, we choose the best step-sizes $\alpha=\frac{1}{L_f}$ for Acc-VR-EXTRA, Acc-VR-EXTRA-CA, Acc-VR-DIGing, and Acc-VR-DIGing-CA, $\alpha=\frac{3}{L_f}$ for VR-EXTRA, VR-DIGing, and GT-SVRG, and $\alpha=\frac{7}{L_f}$ for EXTRA and DIGing. 
The other parameters are chosen according to the theories. We set the parameters of APAPC according to Theorem 2 in \citep{richtaric-2020-dist}. For Acc-DVR, we follow the suggestions in the experimental section in \citep{DVR} to set the number of inner iterations to $\frac{n}{1-p_{comm}}$ (one pass over the local dataset), and the batch Lipschitz constant as $L_f=0.01\overline L_f$, which leads to better performance of Acc-DVR than the theoretical setting of $L_f=\max_i \frac{\|\A_{(i)}\|_2^2}{4n}+\mu$. We follow Algorithm 2 and Theorem 5 in \citep{DVR} to choose other parameters of Acc-DVR, that is, $\alpha=\frac{2}{L_f\kappa_c}$, $\eta=\min\{p_{comm}(\beta+\mu),\frac{p_{ij}}{\alpha(1+1/(4n\mu))}\}$, $\beta=\frac{\overline L_f}{n}-\mu$, $p_{comm}=(1+\frac{n+\kappa_s^{\beta}}{\kappa_b^{\beta}\kappa_c})^{-1}$, $p_{ij}=\frac{1-p_{comm}}{n}$, $\kappa_s^{\beta}=\frac{\overline L_f}{\beta+\mu}$, and $\kappa_b^{\beta}=\frac{L_f}{\beta+\mu}$\footnote{We do not find the parameters in APAPC and Acc-DVR in the role as step-sizes in the form of $\bO(\frac{1}{L_f})$, thus we set the parameters according to their theories directly.}. For the case of $\mu=5\times 10^{-7}$, we choose the same parameters as above. For the case of $\mu=5\times 10^{-5}$, we set the step-sizes $\alpha=\frac{2}{L_f}$ for VR-EXTRA, VR-DIGing, and GT-SVRG, and $\alpha=\frac{3}{L_f}$ for EXTRA and DIGing, and other parameters are chosen the same as above. Specially, since $\frac{n\kappa_b}{\kappa_s}<2\kappa_c$ and $\frac{n\kappa_b}{\kappa_s}<\kappa_c^2$ for the grid graph, we set the mini-batch size $b=\frac{\overline L_f}{L_f}$ for Acc-VR-EXTRA and Acc-VR-DIGing according to Remark \ref{acc-remark}(1).

\begin{figure*}
\centering
\begin{tabular}{@{\extracolsep{0.001em}}c@{\extracolsep{0.001em}}c}
\includegraphics[width=0.5\textwidth,keepaspectratio]{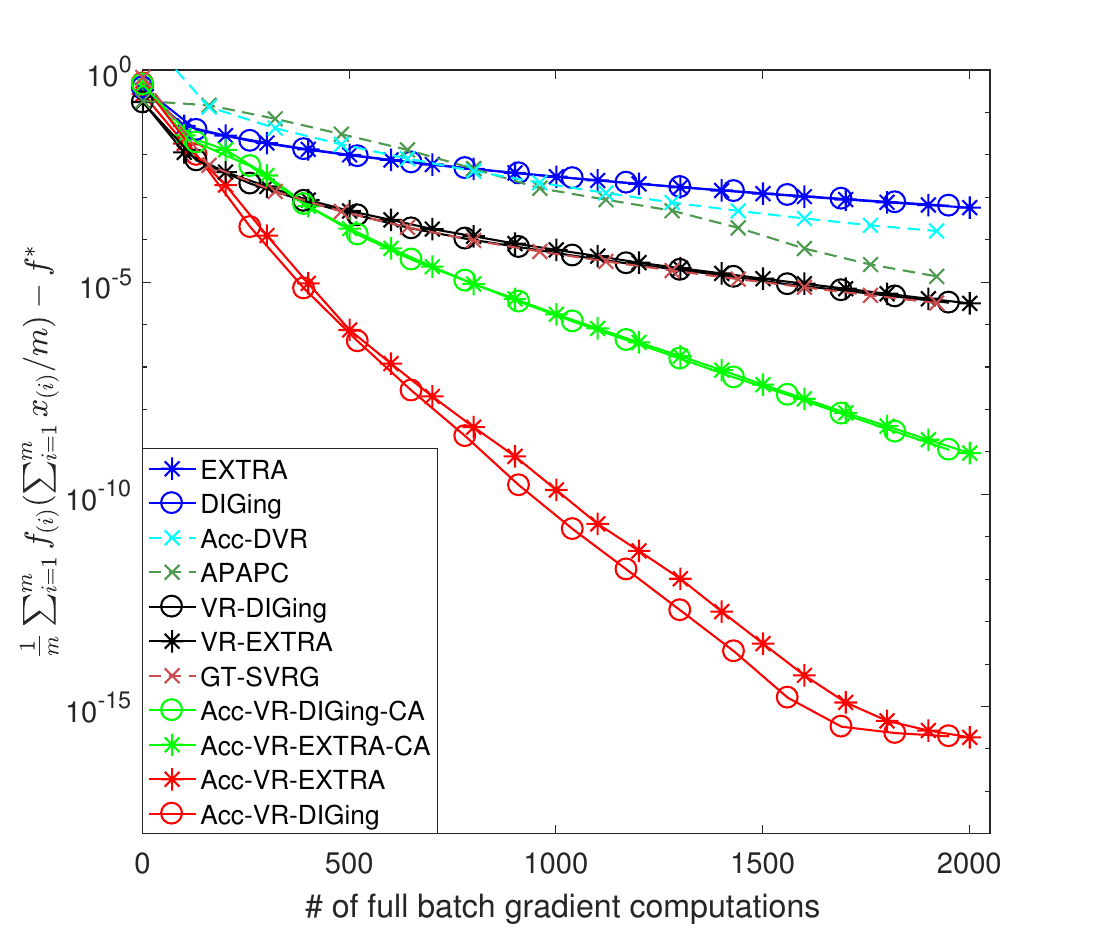}
&\includegraphics[width=0.5\textwidth,keepaspectratio]{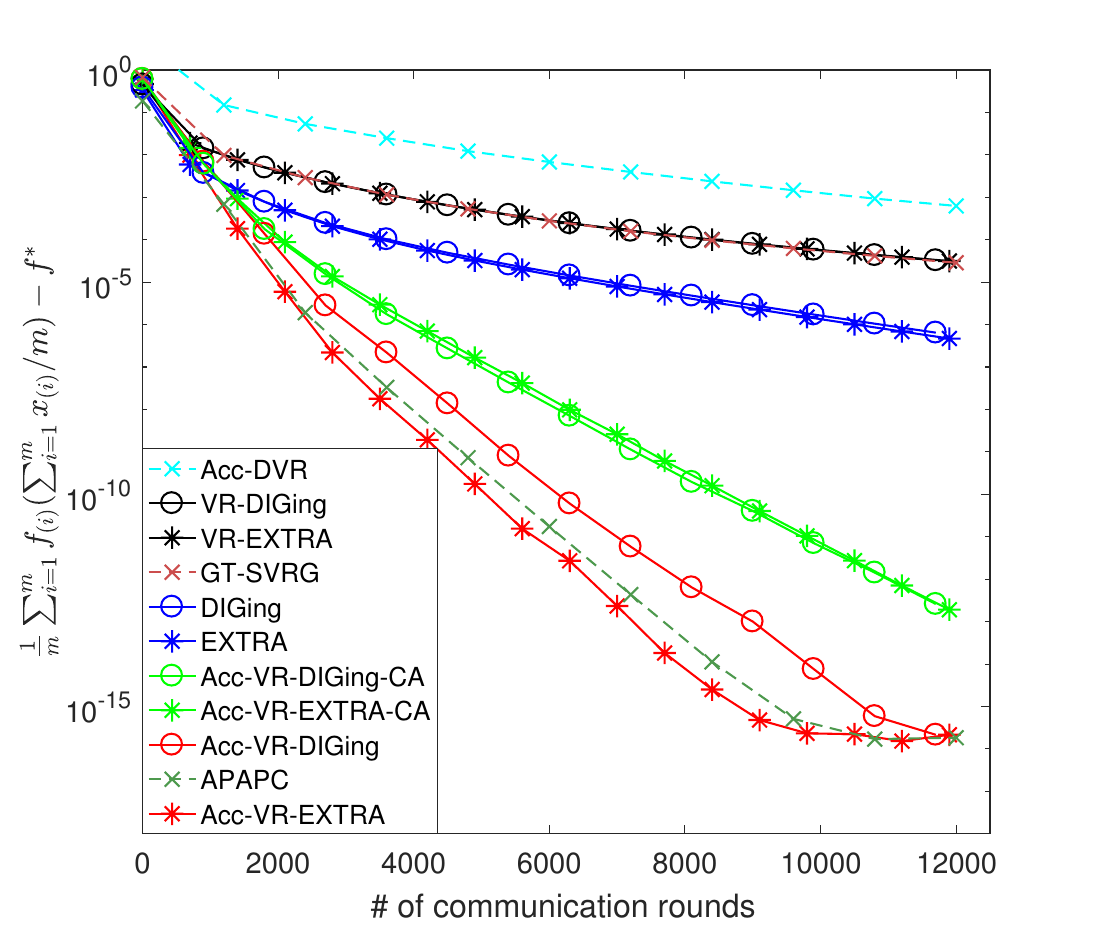}\\
\includegraphics[width=0.5\textwidth,keepaspectratio]{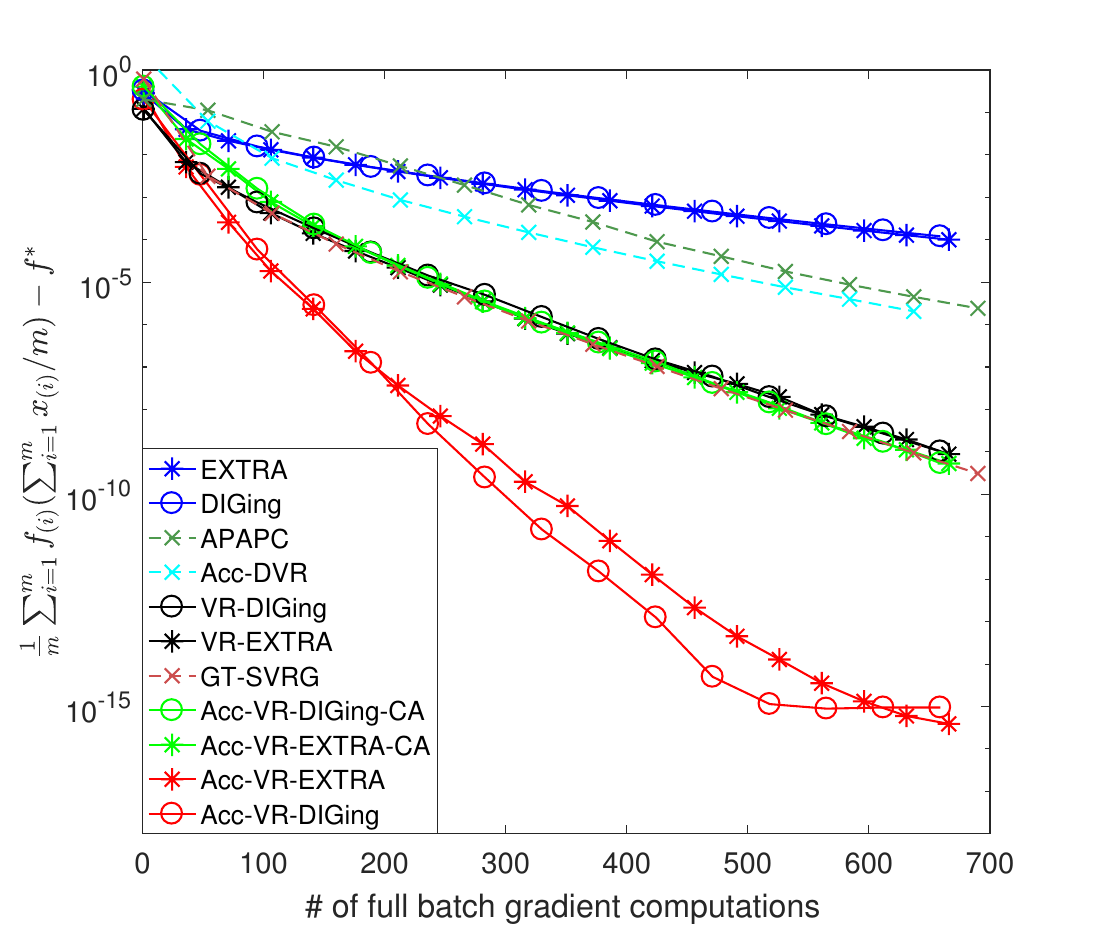}
&\includegraphics[width=0.5\textwidth,keepaspectratio]{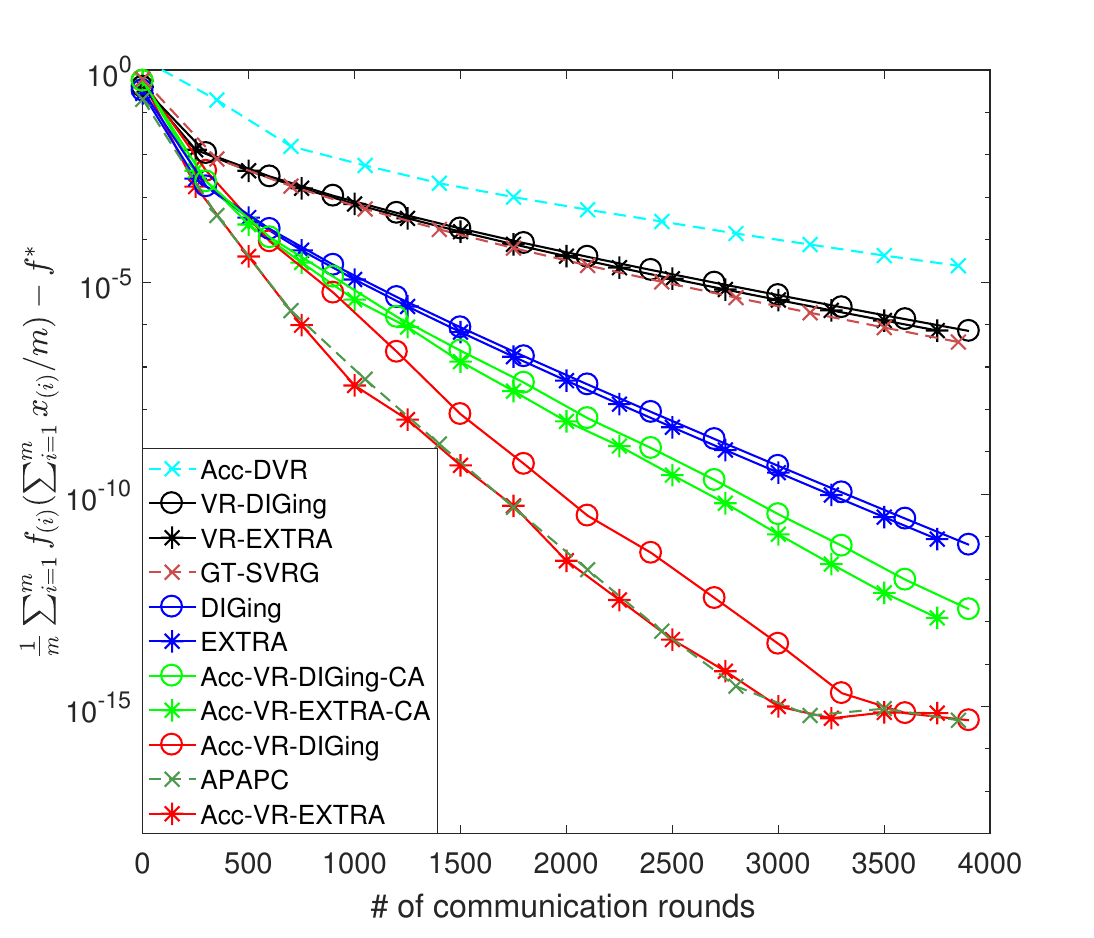}\\
\includegraphics[width=0.5\textwidth,keepaspectratio]{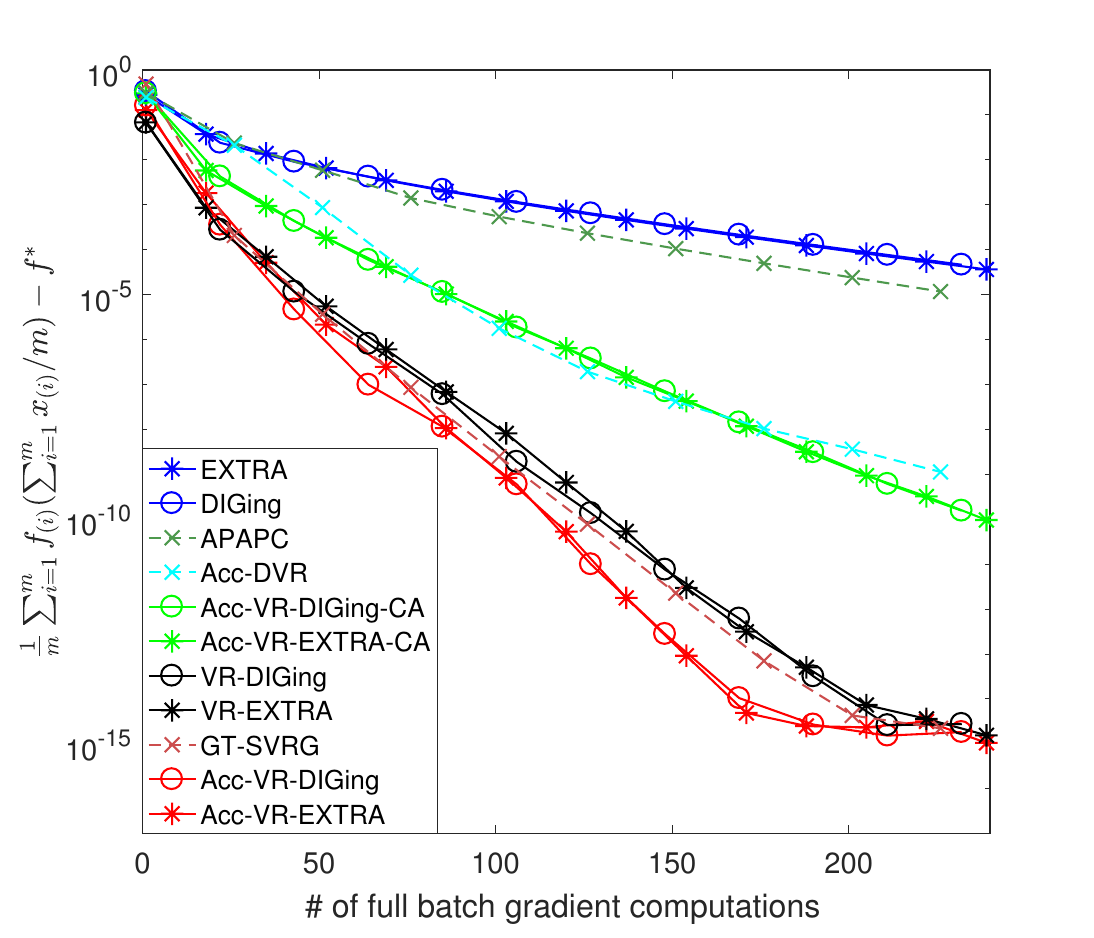}
&\includegraphics[width=0.5\textwidth,keepaspectratio]{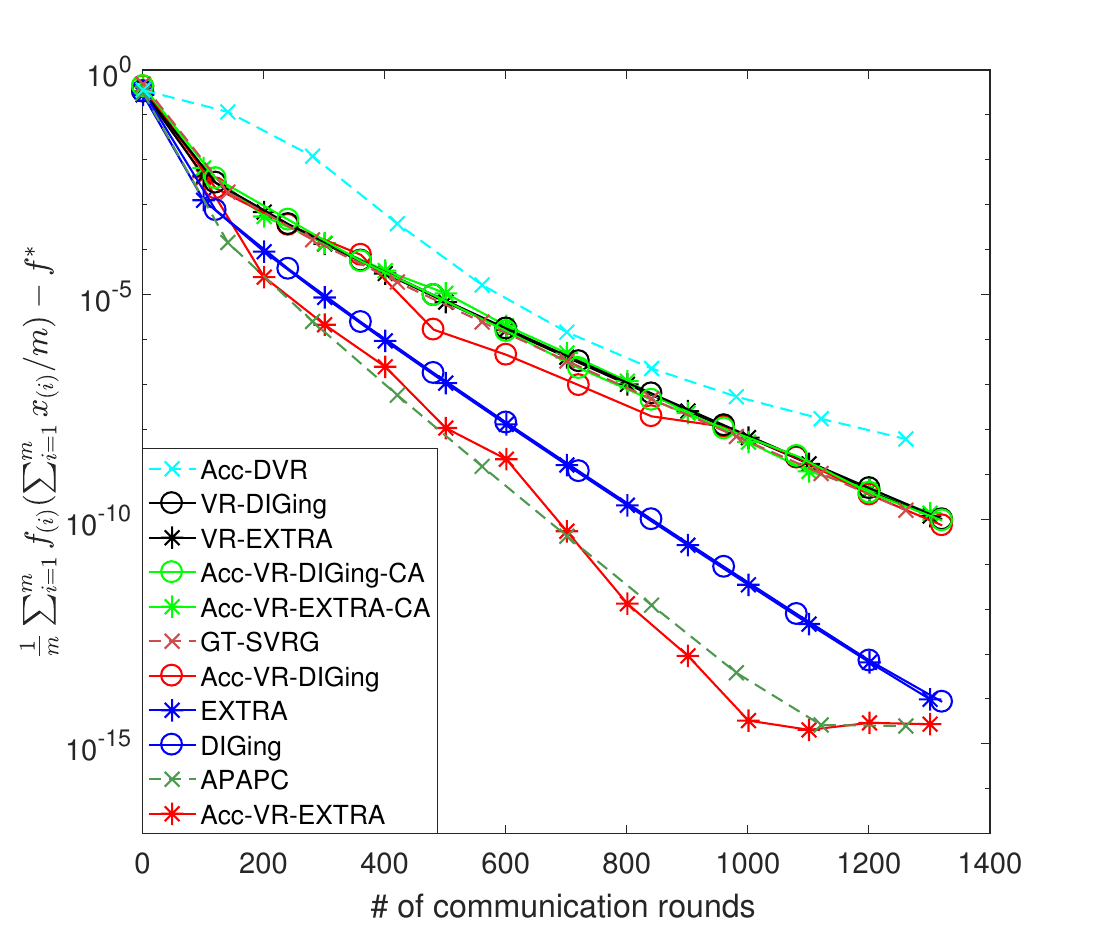}
\end{tabular}
\caption{Comparisons on Erd\H{o}s$-$R\'{e}nyi random graph with $\mu=5\times 10^{-7}$ (top), $\mu=5\times 10^{-6}$ (middle), and $\mu=5\times 10^{-5}$ (bottom).}\label{fig1}
\end{figure*}

\begin{figure*}
\centering
\begin{tabular}{@{\extracolsep{0.001em}}c@{\extracolsep{0.001em}}c}
\includegraphics[width=0.5\textwidth,keepaspectratio]{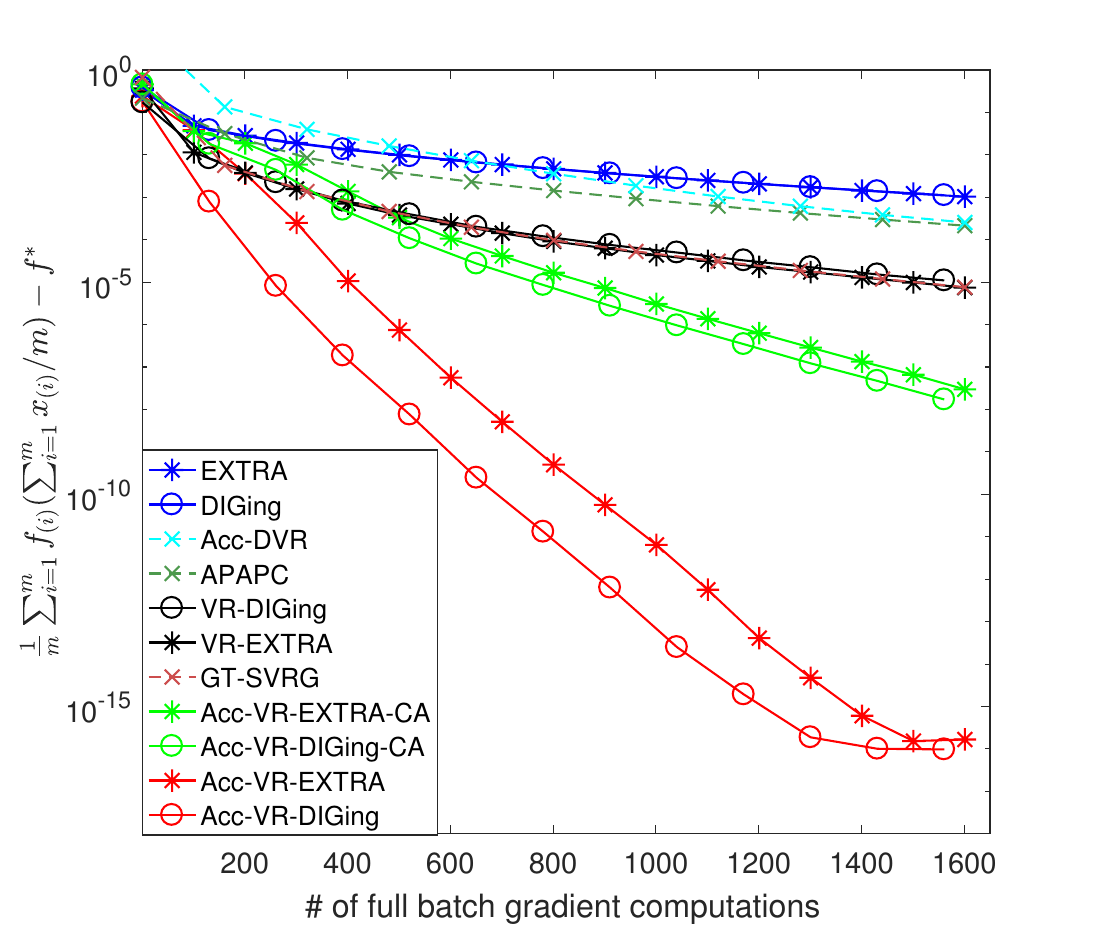}
&\includegraphics[width=0.5\textwidth,keepaspectratio]{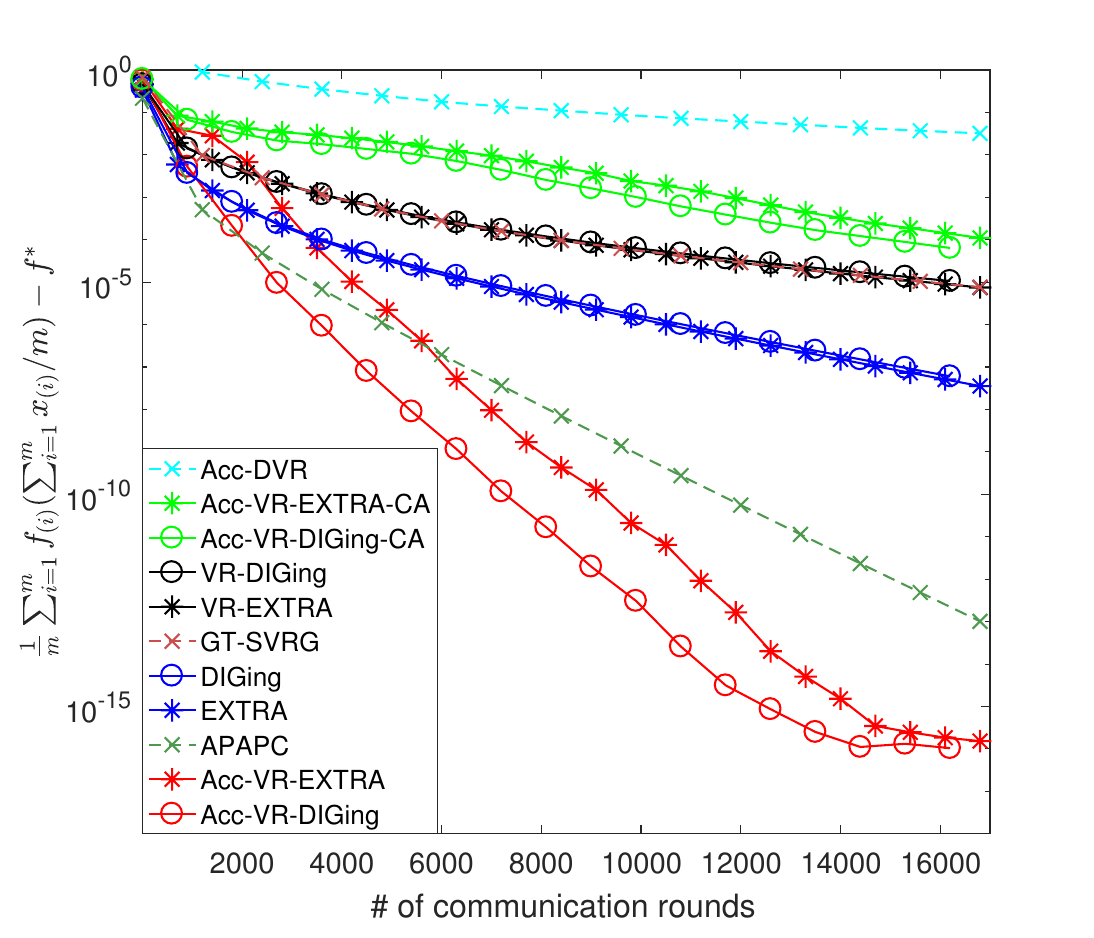}\\
\includegraphics[width=0.5\textwidth,keepaspectratio]{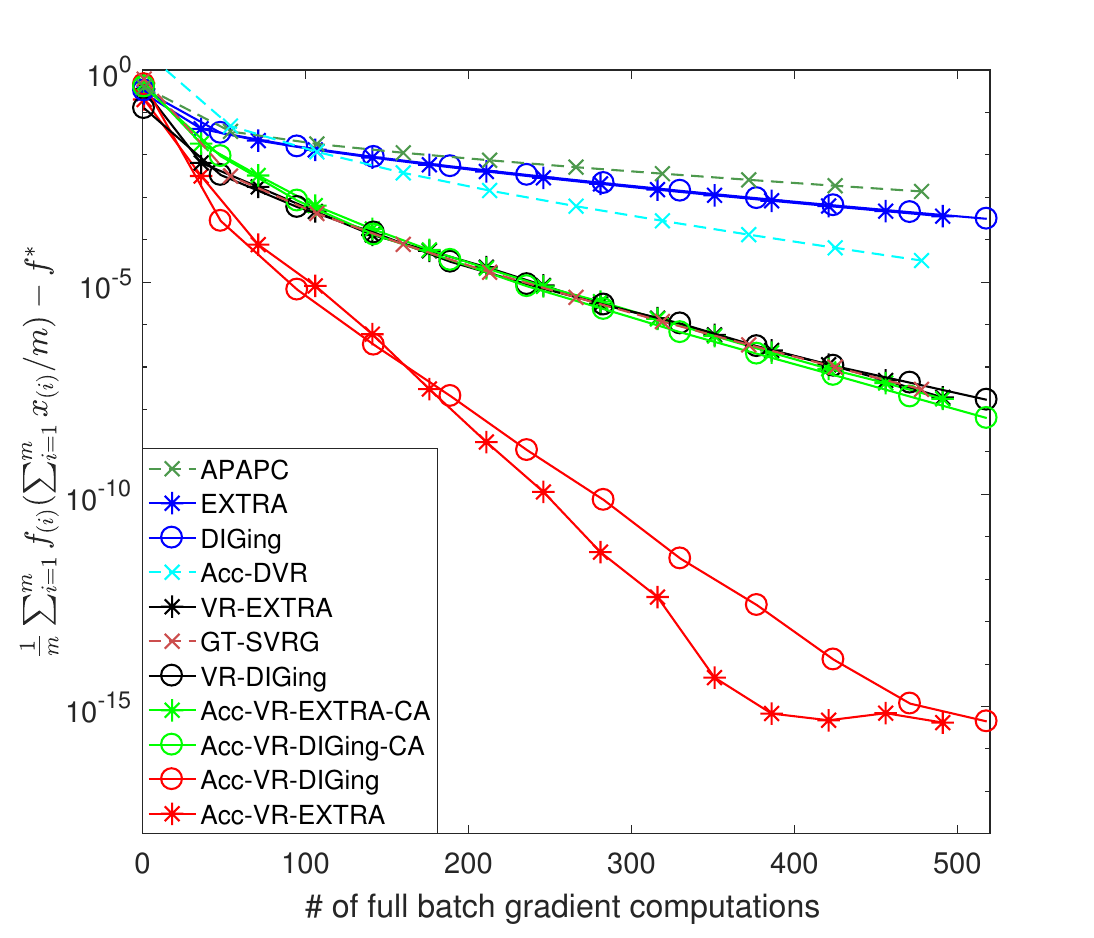}
&\includegraphics[width=0.5\textwidth,keepaspectratio]{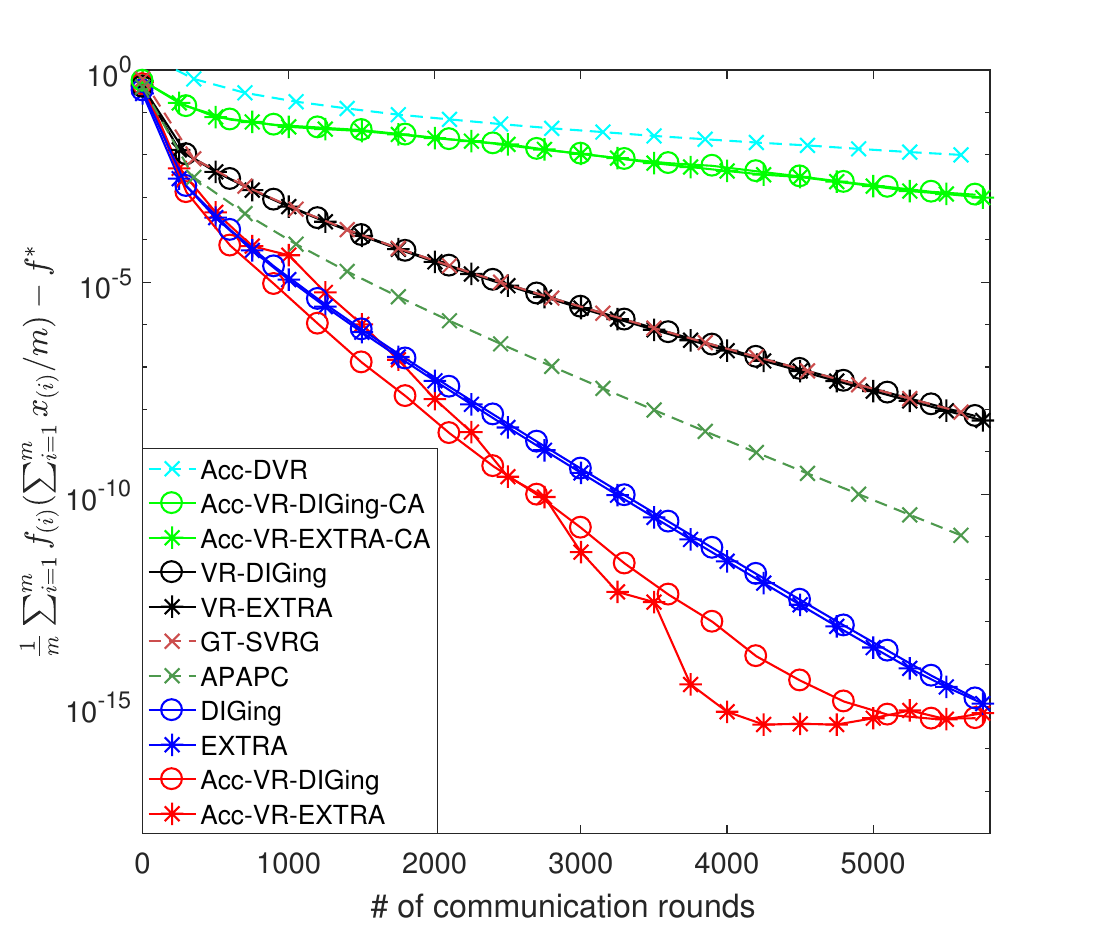}\\
\includegraphics[width=0.5\textwidth,keepaspectratio]{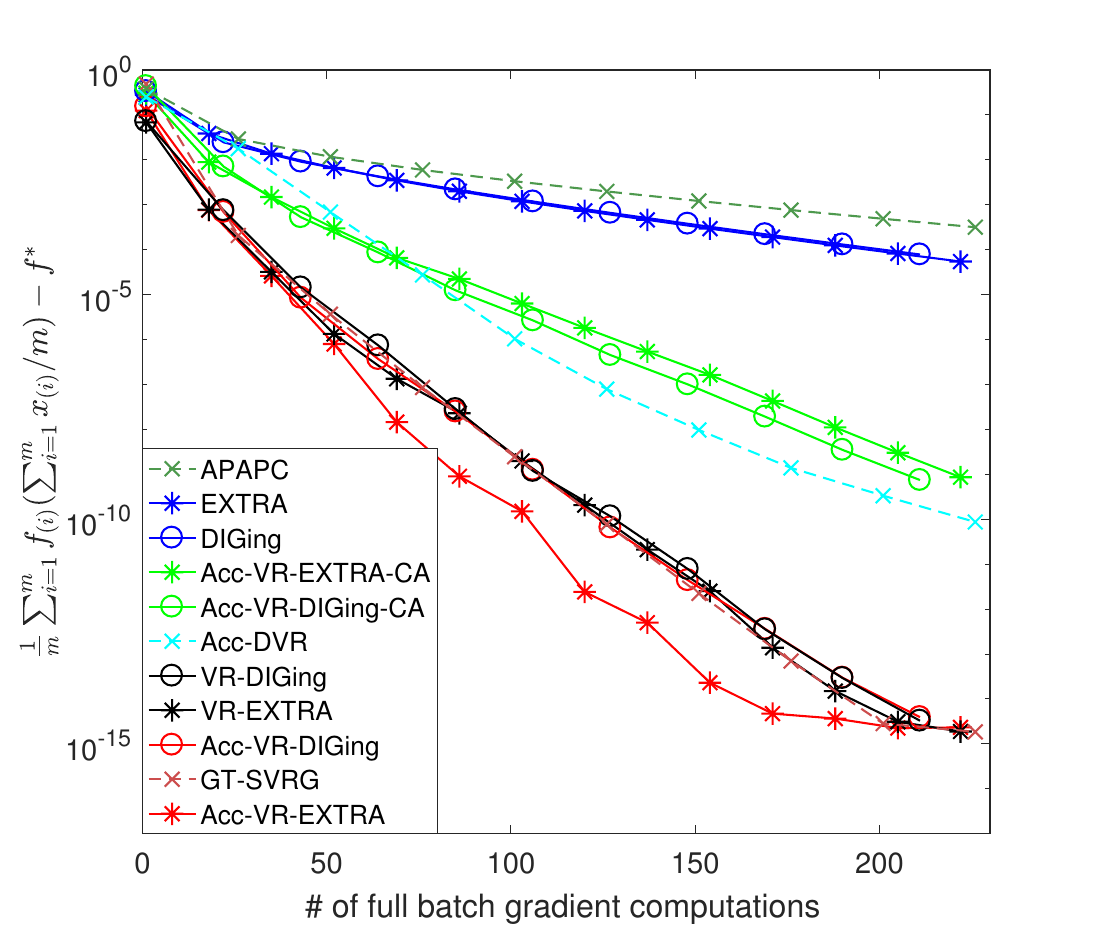}
&\includegraphics[width=0.5\textwidth,keepaspectratio]{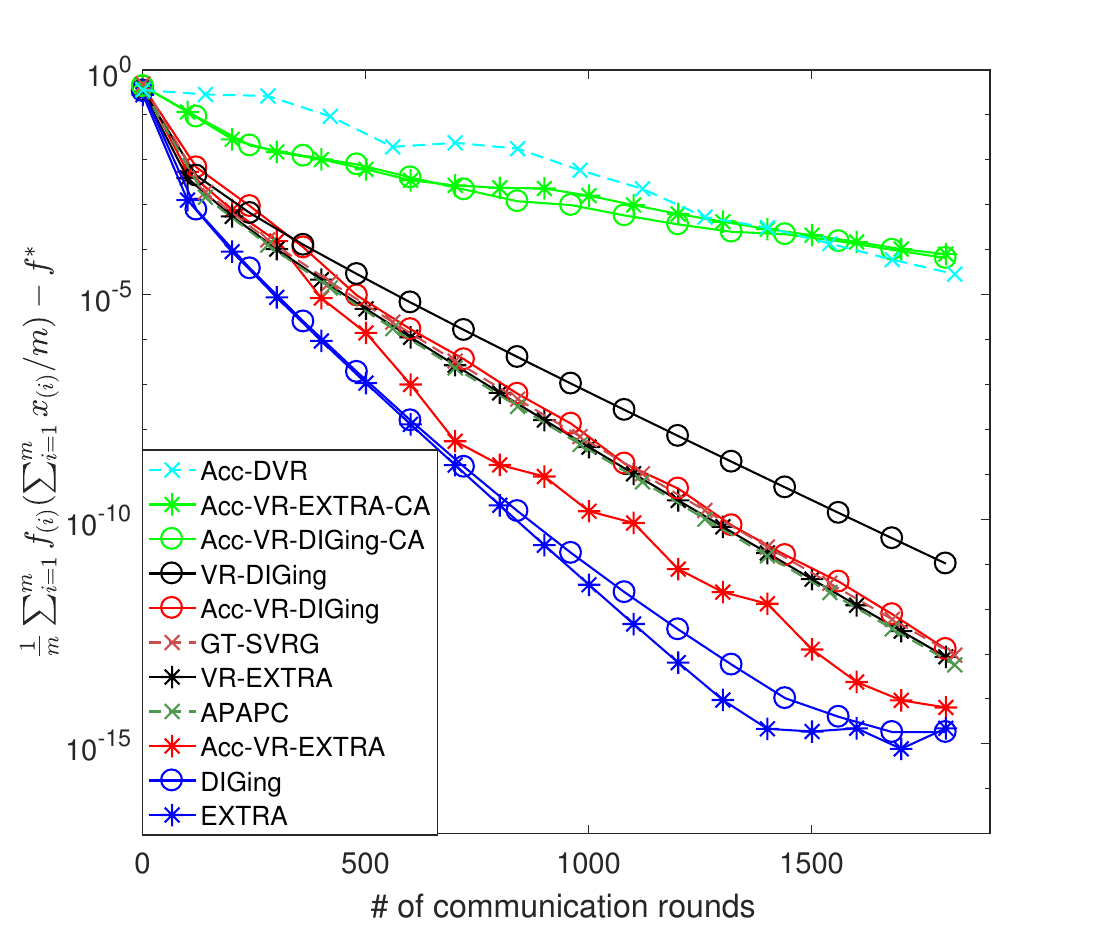}
\end{tabular}
\caption{Comparisons on grid graph with $\mu=5\times 10^{-7}$ (top), $\mu=5\times 10^{-6}$ (middle), and $\mu=5\times 10^{-5}$ (bottom).}\label{fig2}
\end{figure*}

Figures \ref{fig1} and \ref{fig2} plot the results on the Erd\H{o}s$-$R\'{e}nyi random graph and grid graph, respectively, where $f^*$ is approximated by the minimum value of the objective function over all iterations of all the compared algorithms. We have the following observations:
\begin{enumerate}
\item VR-EXTRA and VR-DIGing need less gradient computations than EXTRA and DIGing to reach the same precision for all cases of $\mu$. This verifies the efficiency of variance reduction in decentralized optimization to reduce the computation cost.
\item When considering the communication cost, VR-EXTRA and VR-DIGing perform worse than EXTRA and DIGing in practice, although they have the same communication complexities theoretically. This is reasonable since EXTRA and DIGing go through all the data at each communication round, while VR-EXTRA and VR-DIGing only use a mini-batch. We observe the mini-batch size of $b=\frac{\overline L_f}{L_f}\approx16$ in our experiment.
\item Acc-VR-EXTRA and Acc-VR-DIGing perform better than VR-EXTRA and VR-DIGing on both computations and communications when $\kappa_s$ is much larger than $n$. For example, $\kappa_s=1000n$ in the top plots and $\kappa_s=100n$ in the middle plots. Otherwise, as seen in the bottom plots with $\kappa_s=10n$, Acc-VR-EXTRA and Acc-VR-DIGing perform quite similarly to VR-EXTRA and VR-DIGing, especially on the computations. This verifies that acceleration only takes effect to reduce the computation cost when $\kappa_s\gg n$.
\item When considering the communication cost, Acc-VR-EXTRA performs similarly to the optimal full batch method of APAPC. This observation matches the theory that the two methods have the same communication complexity.
\item Acc-VR-EXTRA-CA and Acc-VR-DIGing-CA do not perform well in practice, although they are theoretically optimal. Thus Acc-VR-EXTRA-CA and Acc-VR-DIGing-CA are more interesting in theory, but they are not suggested in practice.
\end{enumerate}

\section{Conclusion and Future Research}\label{sec:conclusion}
This paper extends the widely used EXTRA and DIGing methods with variance reduction, and four VR-based stochastic decentralized algorithms are proposed. The proposed VR-EXTRA has the $\bO((\kappa_s+n)\log\frac{1}{\epsilon})$ stochastic gradient computation complexity and the $\bO((\kappa_b+\kappa_c)\log\frac{1}{\epsilon})$ communication complexity. The proposed VR-DIGing has a little worse communication complexity of $\bO((\kappa_b+\kappa_c^2)\log\frac{1}{\epsilon})$. Our stochastic gradient computation complexities keep the same as the single-machine VR methods such as SVRG, and our communication complexities are the same as those of EXTRA and DIGing, respectively. The proposed accelerated VR-EXTRA and VR-DIGing achieve both the optimal $\bO((\sqrt{n\kappa_s}+n)\log\frac{1}{\epsilon})$ stochastic gradient computation complexity and $\bO(\sqrt{\kappa_b\kappa_c}\log\frac{1}{\epsilon})$ communication complexity. They are also the same as the ones of single-machine accelerated VR methods such as Katyusha, and the accelerated full batch decentralized methods such as MSDA, respectively.

DIGing, called gradient tracking in other literatures, is a fundamental decentralized algorithm in the distributed optimization community. However, its state-of-the-art complexity is $\bO((\kappa_b+\kappa_c^2)\log\frac{1}{\epsilon})$, which is worse than the $\bO((\kappa_b+\kappa_c)\log\frac{1}{\epsilon})$ one of EXTRA. An open problem is that can we improve it from $\bO((\kappa_b+\kappa_c^2)\log\frac{1}{\epsilon})$ to $\bO((\kappa_b+\kappa_c)\log\frac{1}{\epsilon})$? Recently, \citet{Koloskova-2021-GT} gave some potential directions, where the complexity is improved from $\bO(\kappa_b^2\kappa_c^2\log\frac{1}{\epsilon})$ \citep{qu2017} to $\bO(\kappa_b\kappa_c\log\frac{1}{\epsilon})$. That is, the dependence on $\kappa_c^2$ has been reduced to $\kappa_c$. Currently, it is unclear whether the complexity can be further improved to $\bO((\kappa_b+\kappa_c)\log\frac{1}{\epsilon})$.

In our Algorithms \ref{extra} and \ref{accextra}, we set the parameters dependent on $\kappa_c$, $L_f$, and $\overline L_f$, where $\kappa_c$ needs the global knowledge of the network, while $L_f$ and $\overline L_f$ needs to know the parameters of the other nodes. It is important to design practical algorithms only dependent on the local parameters, such as the Lipschitz constant $L_{(i)}$ and strong-convexity constant $\mu_{(i)}$, while still keep the optimal complexities. Another open problem is whether the reformulation (\ref{problem_consd}) can be extended to directed graphs, where $\U$ and $\V$ cannot simply take the ones in this paper. Other interesting extensions include compression, asynchrony, and so on.

\bibliography{arxiv}
\bibliographystyle{icml2020}

\end{document}